\newtheorem{theorem}{Theorem}
\newtheorem{prop}[theorem]{Proposition}
\newtheorem{cor}[theorem]{Corollary}
\newtheorem{defn}[theorem]{Definition}
\theoremstyle{definition}
\newtheorem*{remark}{Remark}
\renewcommand{\[}{\begin{equation}}
\renewcommand{\]}{\end{equation}}
\newcommand{\ddb}{\sqrt{-1}\partial\overline{\partial}}
\def\Xint#1{\mathchoice
{\XXint\displaystyle\textstyle{#1}}
{\XXint\textstyle\scriptstyle{#1}}
{\XXint\scriptstyle\scriptscriptstyle{#1}}
{\XXint\scriptscriptstyle\scriptscriptstyle{#1}}
\!\int}
\def\XXint#1#2#3{{\setbox0=\hbox{$#1{#2#3}{\int}$ }
\vcenter{\hbox{$#2#3$ }}\kern-.6\wd0}}
\def\dashint{\Xint-}
\title{The partial $C^0$-estimate along the continuity method}
\author{G\'abor Sz\'ekelyhidi}
\address{Department of Mathematics, University of Notre Dame, Notre
  Dame, IN 46556}
\email{gszekely@nd.edu}
\begin{document}

\begin{abstract}
  We prove that the partial $C^0$-estimate holds for metrics along
  Aubin's continuity method for finding K\"ahler-Einstein
  metrics, confirming a special case of a conjecture due to Tian.
  We use the method developed in recent work of
  Chen-Donaldson-Sun on the analogous problem for conical
  K\"ahler-Einstein metrics. 
\end{abstract}

\maketitle

\section{Introduction}
A fundamental problem in K\"ahler geometry is the existence of
K\"ahler-Einstein metrics on Fano manifolds. Yau~\cite{Yau93}
conjectured that the existence is related to the stability of the
manifold in an algebro-geometric sense. A precise notion of stability,
called K-stability, was defined by Tian~\cite{Tian97}, who also showed that
K\"ahler-Einstein manifolds are K-stable. The other direction of the
conjecture was recently obtained by 
Chen-Donaldson-Sun~\cite{CDS12},
showing that K-stable manifolds admit K\"ahler-Einstein
metrics. The K\"ahler-Einstein metrics are constructed using a
continuity method suggested by Donaldson~\cite{Don09},
passing through singular K\"ahler-Einstein metrics
with conical singularities along a divisor. One key ingredient of the
proof is establishing the partial $C^0$-estimate conjectured
by Tian~\cite{Tian90}, for such conical metrics. In this paper we extend
the techniques of Chen-Donaldson-Sun~\cite{CDS13_2, CDS13_3}
to obtain the partial $C^0$-estimate along the more classical
continuity method studied by Aubin~\cite{Aub84}, which uses smooth metrics. 

First we recall the Bergman kernel. Let $M$ be a Fano manifold,
fix a metric $\omega\in c_1(M)$, 
and choose a metric $h_k $ on $K_M^{-1}$ whose curvature is
$k\omega$. Define the $L^2$ inner product on $H^0(K_M^{-k})$  given  by
\[ \langle s, t\rangle = \int_{M} \langle s,t\rangle_{h^k}
\frac{(k\omega)^n}{n!}. \]
The Bergman kernel $\rho_{\omega, k} : M\to\mathbf{R}$ can be defined as
\[ \rho_{\omega,k}(x) = \sum_{i=0}^{N_k} |s_i(x)|^2_{h^k}, \]
where $\{s_0,\ldots, s_{N_k}\}$ is any orthonormal basis of $H^0(K_M^{-k})$. 

Now fix another K\"ahler form $\alpha\in c_1(M)$ and
suppose that $\omega_t\in c_1(M)$ solves the equation
\[ \label{eq:Aubincont}
\mathrm{Ric}(\omega_t) = t\omega_t + (1-t)\alpha, \]
for $t\in [0, T)$, with $T\leqslant 1$. Our main result is that
Tian's partial $C^0$-estimate 
holds for this family of metrics. 
\begin{theorem}\label{thm:main}
  There is an integer $k_0$ and a constant $c > 0$ (depending on $M,
  \alpha$), 
  such that $\rho_{\omega_t, k_0}(x) > c$ for all $x\in M$ and $t\in [0,T)$.
\end{theorem}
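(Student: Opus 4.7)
The plan is to adapt the strategy of Chen-Donaldson-Sun from the conical K\"ahler-Einstein setting to Aubin's smooth continuity path. I argue by contradiction: suppose the estimate fails. Then a diagonal argument produces sequences $t_i \to T$ and $x_i \in M$ with $\rho_{\omega_{t_i}, k}(x_i) \to 0$ for every fixed $k$.

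The first step is to secure uniform geometric control along the path. Equation~\eqref{eq:Aubincont} gives $\mathrm{Ric}(\omega_t) \geq t\omega_t$ (since $\alpha > 0$), so the Ricci curvature is bounded below once $t$ is bounded away from zero. Perelman-type arguments, adapted to the twisted equation, should provide uniform bounds on the scalar curvature and diameter together with volume non-collapsing. Gromov compactness and Cheeger-Colding theory then yield, after passing to a subsequence, pointed Gromov-Hausdorff convergence $(M, \omega_{t_i}, x_i) \to (Z, d_\infty, x_\infty)$, with a regular-singular decomposition $Z = \mathcal{R} \sqcup \mathcal{S}$ in which $\mathcal{S}$ has large Hausdorff codimension.

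The second step is a Donaldson-Sun-style local analysis. Pick a regular point $y \in \mathcal{R}$ close to $x_\infty$, where a tangent cone to $Z$ is isometric to $\mathbf{C}^n$. On the approximating manifolds $(M, \omega_{t_i})$ one builds an almost-holomorphic Gaussian peak section concentrated near the point corresponding to $y$, using holomorphic coordinates on a nearly-Euclidean ball. H\"ormander's $L^2$-estimate, applied with the positive Ricci lower bound supplied by the twisted equation, then corrects this into a genuine holomorphic section $s \in H^0(K_M^{-k})$ with controlled $L^\infty$-norm and non-trivial value near $x_i$, for some fixed $k$. This furnishes a positive lower bound on $\rho_{\omega_{t_i}, k}(x_i)$, contradicting the hypothesis.

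The main obstacle is carrying the delicate Donaldson-Sun analysis through in the twisted setting. Conceptually the residual $(1-t)\alpha$ term disappears under rescaling, so tangent cones remain Ricci-flat K\"ahler cones and the local picture is unchanged. The difficulty is technical: one must verify that (i)~Perelman's estimates genuinely hold along Aubin's path, even though the family is not K\"ahler-Ricci flow; (ii)~the volume comparison, Green's function and heat kernel bounds underlying the tangent cone analysis survive the twist; and (iii)~the cutoffs and weights used in H\"ormander's $\bar\partial$-estimate yield peak sections with the required pointwise behaviour. Item~(i) in particular demands care, since Perelman's original arguments exploit the specific structure of the K\"ahler-Ricci flow rather than a general twisted K\"ahler-Einstein equation.
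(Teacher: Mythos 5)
Your high-level outline (contradiction, Gromov--Hausdorff compactness, tangent-cone analysis, H\"ormander peak sections) matches the framework of the paper, and the compactness-and-contradiction reduction is indeed the way the main theorem follows from the local construction. But you have misidentified where the real difficulty lies, and the step you wave away is in fact the entire technical content of the paper.

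First, your item~(i) is a non-issue. Along Aubin's path one has $\mathrm{Ric}(\omega_t)\geqslant 0$ always, and $\mathrm{Ric}(\omega_t)\geqslant t\omega_t$ gives a diameter bound by Myers once $t$ is bounded below (small $t$ being handled by Yau's a priori estimates for the uniformly elliptic problem). The total volume is a fixed topological quantity, so Bishop--Gromov then gives non-collapsing. No Perelman-type analysis is needed; the basic compactness input is far easier here than along the K\"ahler--Ricci flow.

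Second, and more seriously, the sentence ``conceptually the residual $(1-t)\alpha$ term disappears under rescaling, so tangent cones remain Ricci-flat K\"ahler cones and the local picture is unchanged'' is exactly the thing that has to be proved, and it is not obvious. Rescaling makes the curvature of $\alpha$ small, but it does not a priori make $\alpha$ small relative to the rescaled $\omega_t$: that would require a bound $\alpha\leqslant C\omega_t$, which is equivalent to a two-sided Ricci bound and is precisely what is missing. The Donaldson--Sun construction you want to invoke requires, among other things, that tangent cones be ``good'' (regular set open, metric Ricci-flat K\"ahler, singular set of zero capacity) and that the metrics converge in $L^p$ on the regular part of the cones. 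In the presence of only a one-sided Ricci bound these facts do not follow from Cheeger--Colding(--Tian) theory. The paper's Sections~3 and~4 are devoted entirely to this: Proposition~\ref{prop:Ricbound} shows by a blow-up argument that GH-closeness to the Euclidean ball forces a two-sided Ricci bound (for $T<1$); Propositions~\ref{prop:densitylower} and~\ref{prop:densityupper} control the ``energy density'' $\int\alpha\wedge\omega^{n-1}$ to rule out bad concentration; and for $T=1$ one needs the machinery of \cite{CDS13_3} (Propositions~\ref{prop:CDS}--\ref{prop:CDSprop8}) to get by without any monotonicity of that density, plus Proposition~\ref{prop:iterated} to exclude codimension-2 conical singularities in the limit. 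Without this work one simply cannot run the H\"ormander argument, because one does not know that the local model near a regular point is $\mathbf{C}^n$ with the flat metric in a strong enough sense. So your proposal, as written, leaves the core of the theorem unproved.
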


This is a special case of a conjecture due to Tian~\cite{Tian90_2}, who
conjectured more generally that the same estimate holds
independently of the background K\"ahler form $\alpha$.
As has been explained by Tian in many places, e.g.~\cite{Tian97,
  Tian02}, this partial $C^0$-estimate along the continuity method is
a crucial ingredient in relating the existence of K\"ahler-Einstein metrics
to an algebro-geometric stability notion. 

The continuity method through conical K\"ahler-Einstein metrics
alluded to above can be
thought of as an analog of \eqref{eq:Aubincont}
where $\alpha$ is a current of integration
along a divisor. The advantage of this approach is that on
``most'' of the manifold the metrics are Einstein, and so powerful
convergence results due to Anderson~\cite{An90},
Cheeger-Colding~\cite{CC97, CC00, CC00_1},
Cheeger-Colding-Tian~\cite{CCT02} can be applied, at the price of
having a singularity along the divisor.

It seems likely 
that Theorem~\ref{thm:main} can be used to give an alternative proof
of the existence of K\"ahler-Einstein metrics on K-stable manifolds,
without the use of conical metrics, by adapting more arguments
from~\cite{CDS13_3}. We hope to address this in future work.
If instead of
K-stability we use the stability notion introduced by
Paul~\cite{Paul12}, then using arguments due to Tian (see
e.g. \cite{Tian13_1, Tian97, Tian12_1}), we obtain the following whose
proof will be given in Section~\ref{sec:corproof}. 
\begin{cor} \label{cor:Paul}
  Suppose that $M$ is a Fano manifold with no holomorphic vector
  fields. Then $M$ admits a K\"ahler-Einstein metric if and only if
  $M$ is stable in the sense
  defined by Paul~\cite{Paul12} for all projective
  embeddings using powers of $K_M^{-1}$. 
\end{cor}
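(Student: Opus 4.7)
The plan is to combine Theorem~\ref{thm:main} with the framework developed by Tian in the cited references, which translates between partial $C^0$-estimates, properness of energy functionals on Bergman space, and algebraic stability notions.

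\emph{The ``only if'' direction.} Assume $M$ admits a K\"ahler-Einstein metric and no holomorphic vector fields. By the Bando-Mabuchi uniqueness theorem and Tian's properness theorem, the Mabuchi K-energy (equivalently the Ding functional) is proper on the space of K\"ahler potentials in $c_1(M)$. Restricting to Bergman-type potentials induced from embeddings by $|K_M^{-k}|$ and using the Tian-Yau-Zelditch expansion to approximate arbitrary potentials by Bergman ones, one deduces the corresponding properness for the finite-dimensional restriction, uniformly for all large $k$. By Paul's results in~\cite{Paul12}, this finite-dimensional properness statement is equivalent to the stated stability condition.

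\emph{The ``if'' direction.} I would run Aubin's continuity method~\eqref{eq:Aubincont}, starting at $t=0$ with a solution given by the Calabi-Yau theorem. Let $T\in (0,1]$ denote the supremum of $t$ admitting a smooth solution. Openness on $[0,T]$ (including the endpoint $t=1$, using that $M$ has no holomorphic vector fields) is standard. It suffices to show that the solutions $\omega_t$ converge smoothly as $t\to T$, at which point a standard continuation argument shows $T=1$ and a K\"ahler-Einstein metric exists. By Theorem~\ref{thm:main}, we have a uniform bound $\rho_{\omega_t,k_0}\geqslant c>0$ for $t\in [0,T)$. An $L^2$-orthonormal basis of $H^0(K_M^{-k_0})$ with respect to $(h^{k_0},\omega_t)$ yields a Kodaira embedding $\iota_t\colon M\hookrightarrow\mathbb{P}^{N_{k_0}}$, and the lower bound on the Bergman kernel (together with the elementary upper bound) translates into a uniform $C^0$-equivalence between $\omega_t$ and the pullback Bergman metric $k_0^{-1}\iota_t^\ast\omega_{FS}$.

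With this comparison in hand, I would invoke Paul's stability at the fixed level $k_0$ (possibly after passing to a higher power), which, via the arguments of Tian in~\cite{Tian13_1, Tian97, Tian12_1}, produces a uniform lower bound for the algebraic K-energy along the family of Bergman metrics $k_0^{-1}\iota_t^\ast\omega_{FS}$. Transferring this via the partial $C^0$-estimate gives a uniform lower bound for the analytic Mabuchi (or Aubin) functionals evaluated at $\omega_t$, and hence a uniform $C^0$-bound on the K\"ahler potentials along the continuity path. Higher-order estimates for the associated complex Monge-Amp\`ere equations are then standard, yielding smooth convergence as $t\to T$. The main obstacle is precisely this transfer step: relating Paul's finite-dimensional algebraic stability to an a~priori bound on the infinite-dimensional analytic energies, which requires careful bookkeeping of the error between $\omega_t$ and its Bergman approximation at level $k_0$. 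The partial $C^0$-estimate of Theorem~\ref{thm:main} is the input that makes this error controllable, and the machinery already assembled in the references supplies the remaining ingredients.
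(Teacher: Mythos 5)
Your ``only if'' direction is essentially the paper's: Tian's properness theorem for manifolds with a K\"ahler-Einstein metric and no holomorphic vector fields (with the Phong-Song-Sturm-Weinkove improvement) gives coercivity of the Mabuchi energy, and restriction to the Bergman locus then gives Paul's stability; the detour through the Tian-Yau-Zelditch expansion is unnecessary since properness on all K\"ahler potentials restricts trivially to the Bergman subspace.

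For the ``if'' direction, however, there is a real gap at exactly the point you flag as ``the main obstacle.'' You propose that Paul's stability yields a ``uniform lower bound for the analytic Mabuchi (or Aubin) functionals evaluated at $\omega_t$'' and that this gives a uniform $C^0$-bound on the potentials. Neither half of this is the mechanism that actually works. First, the Mabuchi energy $\mathcal{M}(\eta_1,\omega_t)$ along Aubin's continuity path is automatically bounded from \emph{above} (it is non-increasing in $t$), so a lower bound on it is not the useful conclusion; and a two-sided bound on $\mathcal{M}$ by itself does not directly produce a $C^0$-bound on the potentials along the path. What the paper actually does is: (i) compare $\omega_i=\omega_{t_i}$ with the Bergman metric $\eta_i=k_0^{-1}\Phi_i^\ast\omega_{FS}$ via the cocycle identity $\mathcal{M}(\eta_1,\eta_i)=\mathcal{M}(\eta_1,\omega_i)+\mathcal{M}(\omega_i,\eta_i)$; (ii) use the monotonicity of $\mathcal{M}$ along the continuity path to bound $\mathcal{M}(\eta_1,\omega_i)$ above; (iii) bound $|\mathcal{M}(\omega_i,\eta_i)|$ using Tian's explicit formula for the Mabuchi energy together with the partial $C^0$-estimate, the gradient estimate $\eta_i< C\omega_i$ from Donaldson-Sun, and Green's function control of the Ricci potential $u_i$; (iv) conclude $\mathcal{M}(\eta_1,\eta_i)$ is bounded, so Paul's properness on Bergman metrics forces the projective varieties $\Phi_i(M)$ to converge inside the $GL(N+1,\mathbf{C})$-orbit of $\Phi_1(M)$; and only then (v) use that orbit-boundedness, again together with the partial $C^0$-estimate, to control the K\"ahler potentials of $\eta_i$ and hence of $\omega_i$, allowing the continuity method to be continued. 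Steps (i)--(iii), especially the estimate of $\mathcal{M}(\omega_i,\eta_i)$, are the content that makes the transfer work, and they are not recoverable from your ``lower bound on the Mabuchi energy'' framing.
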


Note that Paul's
stability can also be tested using test-configurations, just like
K-stability (see \cite{Paul12}), however in general the ``weight''
associated to a test-configuration is different in the two theories. 
In particular the weight in Paul's stability notion recovers precisely
the asymptotics of the Mabuchi energy along a one-parameter family in
a fixed projective space, in contrast with
the generalized Futaki invariant of Tian~\cite{Tian97} and
Donaldson~\cite{Don02} in K-stability. Indeed, in general the latter is only
related to Mabuchi energy asymptotics in a fixed projective space up
to a correction term (see Paul-Tian~\cite{PT06} and
Phong-Ross-Sturm~\cite{PRS06}). 

We will now sketch the general idea in obtaining a lower bound on the
Bergman kernel for a fixed metric $\omega$. 
  The Bergman kernel can alternatively be written as
\[ \rho_{\omega, k}(x) = \sup_{s\in H^0(K_M^{-k})} \frac{|s(x)|_h^2}{\Vert
  s\Vert_{L^2}^2}, \]
so to prove a lower bound on $\rho$ we need to construct
``peaked sections'' for any point $x$, which are sections with
small $L^2$-norm, but large value at $x$. 
The basic idea for this goes back to Tian~\cite{Tian90_1}, who used
H\"ormander's $L^2$-technique to construct peaked sections (see also
Siu-Yau~\cite{SY82} for a precursor), and in the
present context proved not only that $\rho_{\omega,k}$ is bounded
below for large enough $k$, but also gave precise asymptotics as
$k\to\infty$. These were later refined by Ruan~\cite{Ru98}, 
Zelditch~\cite{Zel98}, Lu~\cite{Lu98} and many others.

The idea is that once $k$ is
large, the metric $k\omega$ is well approximated by the Euclidean
metric on the unit ball $B$ near any given point $p$ (in practice a larger
ball is used, with radius small compared to $\sqrt{k}$). The metric
$h_k$ in turn is well approximated by the metric $e^{-|z|^2}$ on the
trivial bundle. The peaked section at $p$ is then constructed by
gluing the constant $\mathbf{1}$ section onto $M$ using a cutoff function, and
then perturbing it to a holomorphic section. The perturbation works
because the exponential decay of the section $\mathbf{1}$ ensures that
after the gluing our section is still approximately holomorphic in an
$L^2$-sense. Thus the key property of a K\"ahler manifold
$(M,\omega)$ is that there is some scale, depending on $\omega$,
at which a neighborhood of each point is well approximated by the Euclidean ball.

If we now have a family of Fano manifolds $(M_i, \omega_i)$, and
we want to find a $k_0$ such that $\rho_{\omega_i, k_0} > c$ for a
fixed positive $c$, then we need to find a scale at which
neighborhoods of each point in each $(M_i, \omega_i)$ are well
approximated by a model. However in general, unless we have very good
control of the metrics, the model cannot just be
the Euclidean ball anymore. Under certain conditions, when the limit
of the $(M_i,\omega_i)$ is an orbifold, the local model can be taken
to be a quotient of the Euclidean ball, and the method can still be
applied (see Tian~\cite{Tian90, Tian93}). For more general sequences
of metrics the fundamental tool is Gromov
compactness~\cite{Gro07}, and the structure theorems of
Cheeger-Colding~\cite{CC97, CC00, CC00_1} and
Cheeger-Colding-Tian~\cite{CCT02} on the Gromov-Hausdorff
limits of manifolds with Ricci curvature bounded from below. Roughly
speaking the consequence of this theory is that for suitable families
of manifolds $(M_i, \omega_i)$, there is a fixed scale, independent of
$i$, at which a neighborhood of each point is well approximated by a
cone $C(Y)\times \mathbf{C}^{n-k}$, where $C(Y)$ is the metric
cone over a $(2k-1)$-dimensional metric space $Y$. Under suitable
conditions one can glue a section $\mathbf{1}$ with exponential decay
over such cones onto the manifolds $M_i$, to obtain sections
which are bounded away from zero at any given point.

This approach was first used by Donaldson-Sun~\cite{DS12} in the
context of a family of K\"ahler-Einstein manifolds, proving the result
analogous to Theorem~\ref{thm:main} in this case (see
Phong-Song-Sturm~\cite{PSS12} for an extension of these ideas to
K\"ahler-Ricci solitons). In the work of
Chen-Donaldson-Sun~\cite{CDS12} on the
Yau-Tian-Donaldson conjecture, a key role is
played by the extension of this result to K\"ahler-Einstein manifolds
which have conical singularities along a divisor.

In Section~\ref{sec:background} we review some of the results from the
theory of Cheeger-Colding, and Chen-Donaldson-Sun that we will use. In
Section~\ref{sec:lessthan1} we will focus on obtaining estimates for
solutions of \eqref{eq:Aubincont} for $t < T$, with $T < 1$. This
corresponds to Chen-Donaldson-Sun's paper \cite{CDS13_2} on conical
metrics with the limiting cone angle being less than $2\pi$. Finally
in Section~\ref{sec:Tis1}, we deal with the case $T=1$ in parallel
with \cite{CDS13_3} on the case when the cone angle tends to $2\pi$.

\subsection*{Acknowledgements}
I would like to thank Aaron Naber, Valentino Tosatti, and Ben
Weinkove
for useful discussions, and Kewei Zhang for pointing out a mistake in
an earlier version of the paper. In addition I thank Simon Donaldson for his interest in
this work, and for several useful comments on an earlier version of
this paper. The
author was supported in part by NSF grant DMS-1306298.

\section{Background} \label{sec:background}
We first give a very quick review of the convergence theory for manifolds
with Ricci curvature bounded from below. Recall that the
Gromov-Hausdorff distance $d_{GH}(X,Y)$ between two compact metric
spaces is the infimum of all $\delta > 0$, such that there is a metric
on the disjoint union $X\sqcup Y$ extending the metrics on $X,Y$ such
that both $X$ and $Y$ are $\delta$-dense. If a sequence of compact
metric spaces $X_i$ converge to $X_\infty$ in the Gromov-Hausdorff
sense, then we will always assume that we have chosen metrics $d_i$ on
$X_i\sqcup X_\infty$ extending the given metrics, such that both $X_i$
and $X_\infty$ are $\delta_i$-dense, with $\delta_i\to 0$.

Suppose now that we have a sequence of $n$-dimensional K\"ahler manifolds
$(M_i,\omega_i)$ satisfying
\begin{enumerate}
\item $\mathrm{Ric}(\omega_i) \geqslant 0$,
\item a non-collapsing condition $\mathrm{V}(B(p_i,1)) > c > 0$, and
\item a uniform diameter bound $\mathrm{diam}(M_i,\omega_i) < C$.
\end{enumerate}
From Gromov's compactness
theorem~\cite{Gro07} it follows that up to passing to a subsequence,
the $(M_i, \omega_i)$ converge to a compact metric space $(Z,
d_Z)$.

For each $p\in Z$, and any sequence $r_k\to 0$,
we can consider the sequence of pointed metric spaces $(Z, p,
r_k^{-1}d_Z)$. The pointed version of Gromov's theorem implies
that up to choosing a subsequence we can extract a limit $(Z_p,
p_\infty, d_\infty)$, which by the results of
Cheeger-Colding~\cite{CC97, CC00, CC00_1} is a metric cone $C(Y)$ over
a $2n-1$-dimensional metric space $Y$. It is called a tangent cone to
$Z$ at $p$, and in general such tangent cones are not unique. The
points in $Z$ can be classified according to how ``singular'' their
tangent cone is, giving rise to a stratification (we use the
convention of \cite{CDS13_2} for the subscripts):
\[ S_n \subset S_{n-1}\subset\ldots \subset S_1 = S \subset Z. \]
Here $S_k$ consists of those points where there is no tangent cone of
the form $\mathbf{C}^{n-k+1}\times C(Y)$. For instance at points in
$S_1\setminus S_2$ there must exist a tangent cone of the form $\mathbf{C}^{n-1}\times
\mathbf{C}_\gamma$, where $\mathbf{C}_\gamma$ is the flat
two-dimensional cone with cone angle $2\pi\gamma$. A key result due to
Cheeger-Colding is that the Hausdorff codimension of $S_k$ is at least
$2k$. The fact that in the K\"ahler case only even dimensional cones
occur, and complex Euclidean factors can be split off, is due to
Cheeger-Colding-Tian~\cite{CCT02}. 

The regular part $Z_{reg}\subset Z$ is defined to be $Z\setminus S$, and
at each point of $Z_{reg}$ every tangent cone is $\mathbf{C}^n$. This
follows from results of Colding~\cite{Col97}, which in effect say that
in the presence of non-negative Ricci curvature, the
Gromov-Hausdorff distance of a unit ball to the Euclidean unit ball is
comparable to the difference in volumes of the two balls. 
An important improvement of this is obtained in the presence of
an upper bound on $\mathrm{Ric}(\omega)$,
using a result due to Anderson~\cite{An90} (see
Theorem 10.25 in Cheeger~\cite{Ch01}).
\begin{prop}\label{prop:Anderson}
  There are constants $\delta, \theta > 0$ depending on $K> 0$
  with the following
  property. Suppose that $B(p,1)$ is a unit ball in a Riemannian
  manifold with bounds $0\leqslant \mathrm{Ric}(g) \leqslant
  Kg$ on its Ricci curvature.

  If $d_{GH}(B(p,1), B^{2n}) <
  \delta$, then for each $q\in B(p,\frac{1}{2})$, the ball $B(q,
  \theta)$ is the domain of a harmonic coordinate system in which the
  components of $\omega$ satisfy
  \[ \label{eq:harmonicprop}
     \begin{aligned} \frac{1}{2}\delta_{jk} &< g_{jk} <
    2\delta_{jk} \\
    \Vert g_{jk}\Vert_{L^{2,p}} &< 2, \text{ for all $p$. }
  \end{aligned}\]
\end{prop}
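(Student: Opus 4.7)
The plan is to argue by contradiction using Anderson's Cheeger--Gromov compactness theorem for manifolds with two-sided Ricci bounds, after first establishing non-collapsing via Colding's volume-convergence theorem.

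Suppose the statement fails: then there exist $K>0$, a sequence of Riemannian manifolds $(M_i,g_i)$ with $0\leqslant\mathrm{Ric}(g_i)\leqslant Kg_i$, base points $p_i$ with $d_{GH}(B(p_i,1),B^{2n})\to 0$, and points $q_i\in B(p_i,1/2)$ for which no harmonic coordinate system on $B(q_i,1/i)$ satisfies \eqref{eq:harmonicprop}. By Colding's volume-convergence theorem, the non-negative Ricci bound plus Gromov--Hausdorff convergence to $B^{2n}$ forces $\mathrm{V}(B(p_i,1))\to\omega_{2n}$, and relative volume comparison then yields a uniform non-collapsing bound $\mathrm{V}(B(q_i,r))\geqslant c r^{2n}$ for all small $r$, with $c$ independent of $i$.

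Next I would apply Anderson's compactness theorem: given a two-sided Ricci bound together with a uniform lower volume bound, a subsequence of $(M_i, g_i, q_i)$ converges in the pointed Cheeger--Gromov $C^{1,\alpha}$ sense (with metrics of class $W^{2,p}$ in harmonic coordinates) to a pointed limit $(X_\infty, g_\infty, q_\infty)$. Because $C^{1,\alpha}$ convergence refines Gromov--Hausdorff convergence and the GH limit of the balls $B(p_i,1)$ is $B^{2n}$, the limit $X_\infty$ is flat Euclidean space near $q_\infty$. Pulling back the Euclidean coordinates on $X_\infty$ by the Cheeger--Gromov approximating diffeomorphisms and solving $\Delta u^k = 0$ with these as boundary data yields, for all $i$ large, a genuine harmonic chart on a ball of some fixed radius $\theta>0$ around $q_i$ in which $g_i$ is $C^{1,\alpha}$-close, hence $L^{2,p}$-close, to $\delta_{jk}$. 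This contradicts the choice of the $q_i$.

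The main obstacle is the Cheeger--Gromov compactness step itself: one needs, on each $M_i$, harmonic charts of a definite size in which the metric components enjoy uniform $W^{2,p}$ bounds, depending only on $K$ and the non-collapsing constant. The mechanism underlying Anderson's theorem is to construct a harmonic map $u:B(q_i,r)\to\mathbf{R}^{2n}$ by solving $\Delta u^k=0$ with approximately linear Dirichlet data, to exploit Cheng--Yau gradient estimates (which use the lower Ricci and lower volume bounds) to show that $u$ is nondegenerate on a slightly smaller ball, and then to apply standard elliptic regularity to the identity $\Delta g_{jk}=-2\mathrm{Ric}_{jk}+Q(\partial g,\partial g)$ in the resulting harmonic chart, using the two-sided Ricci bound, to bootstrap to uniform $W^{2,p}$ control on $g_{jk}$.
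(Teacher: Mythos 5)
The paper does not contain a proof of Proposition~\ref{prop:Anderson}: it is quoted as a known result due to Anderson~\cite{An90}, with a pointer to Theorem~10.25 of Cheeger~\cite{Ch01}. So there is no ``paper's own proof'' to compare your argument against; what you are really being asked to reconstruct is the content of the citation.

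Your sketch is a reasonable unpacking of that citation, but it carries a circularity you should be explicit about. The pointed Cheeger--Gromov compactness theorem of Anderson that you invoke in your second paragraph has, as its central ingredient, precisely a uniform lower bound on the harmonic radius under two-sided Ricci bounds and non-collapsing --- i.e.\ it already contains the conclusion you are after. Invoking it makes the proposition look like a corollary of a deeper theorem, when in fact the proposition \emph{is} (a local form of) the deeper theorem. To your credit, your final paragraph recognizes this and correctly names the actual mechanism: solve $\Delta u^k = 0$ with approximately linear Dirichlet data, use Cheng--Yau gradient estimates (which require only a lower Ricci bound and the non-collapsing you extracted from Colding volume convergence) to show the resulting map is a nondegenerate chart on a definite ball, and then bootstrap via the identity $\Delta g_{jk} = -2\,\mathrm{Ric}_{jk} + Q(\partial g,\partial g)$ in harmonic coordinates, using the two-sided Ricci bound, to get uniform $L^{2,p}$ control of $g_{jk}$. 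A cleaner write-up would lead with this construction and then obtain the uniform $\delta,\theta$ by the contradiction/compactness argument you outline, rather than presenting the compactness theorem as the primary tool. Your use of Colding's volume-convergence theorem to convert the Gromov--Hausdorff hypothesis $d_{GH}(B(p,1),B^{2n})<\delta$ into a non-collapsing bound is correct, and this is exactly why the hypothesis of the proposition suffices to run Anderson's machinery.
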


In the K\"ahler setting, the complex structure $J$ can also be assumed to
be close to the Euclidean complex structure $J_0$ in $C^{1,\alpha}$, using
that it is covariant constant. For sufficiently small $\delta$ one can
then construct holomorphic coordinates with respect to $J$ which are
close to the Euclidean coordinates in $C^{2,\alpha}$, using the
families version of Newlander-Nirenberg's interability
result~\cite{NW63}. It follows that in the K\"ahler case we can assume
that we have holomorphic coordinates on the ball $B(q,\theta)$
satisfying the properties \eqref{eq:harmonicprop} above, replacing
$\theta$ by a smaller constant if necessary. The fact that we obtain
holomorphic coordinates on a ball of a uniform size (once the complex
structure $J$ is sufficiently close to $J_0$) follows from the
method of proof in \cite{NW63}, and is made more explicit in the
sharper results of Hill-Taylor~\cite{HT07}, see for instance
Proposition 4.1.

It follows from this result that in the setting above, where
$(M_i, \omega_i) \to (Z,d_Z)$ in the Gromov-Hausdorff sense, in the
presence of a 2-sided Ricci curvature bound we have:
\begin{itemize}
  \item the regular set $Z_{reg}\subset Z$ is open, and
  \item the metrics $\omega_i$ converge in $C^{1,\alpha}$, uniformly
    on compact sets, to a K\"ahler
metric $\omega_Z$ on $Z_{reg}$ inducing the distance $d_Z$. 
\end{itemize}

\noindent The same ideas also apply to scaled limit spaces, so in particular to
tangent cones. 

One of the key
points in \cite{CDS13_2,CDS13_3} is to show that in their situation,
even though there is not a two-sided Ricci curvature bound, 
the regular set is still open, and one
obtains $L^p$ convergence of the metrics on the regular
part of limit spaces, for all $p$. This is also what our main goal is
going to be. 

We now review the construction
in Donaldson-Sun~\cite{DS12} that we will need. First recall the
following definition.
\begin{defn}\label{defn:goodcone}
  Suppose that $(Z,d_Z)$ is a Gromov-Hausdorff limit of K\"ahler manifolds as
  above, and $C(Y)$ is a tangent cone at $p\in Z$. We say that the
  tangent cone $C(Y)$ is good, if the following hold:
  \begin{enumerate}
    \item the regular set $Y_{reg}$ is open in
      $Y$, and smooth,
      \item the distance function on $C(Y_{reg})$ is induced by a
        Ricci flat K\"ahler metric,
      \item for all $\eta > 0$
  there is a Lipschitz function $g$ on $Y$, equal to 1 on a
  neighborhood of the singular set $S_Y\subset Y$, supported on the
  $\eta$-neighborhood of $S_Y$ and with $\Vert\nabla g\Vert_{L^2}
  \leqslant \eta$.
  \end{enumerate}
\end{defn}

With these definitions the following is the consequence of the main
construction in \cite{DS12} (see also 
\cite[Proposition 7]{CDS13_2}).  
\begin{prop}
  Suppose that $(M_i,\omega_i)$ are as above (with non-negative Ricci
  curvature, non-collapsed and with bounded diameter), and they
  converge to a limit space $(Z, d_Z)$. Suppose that each tangent cone
  to $Z$ is good, and that the metrics (after rescaling)
  converge in $L^p$ on compact sets of
  the regular part of the tangent cones (for some $p > 2n$) to the
  Ricci flat K\"ahler metric.

  Then for each $p\in Z$ there are $b(p), r(p) > 0$ and an integer
  $k(p)$ with the following property. There is a $k \leqslant k(p)$
  such that for sufficiently large $i$ there is a holomorphic section
  $s$ of $K_{M_i}^{-k}$ with $L^2$-norm 1, and with $|s(x)| \geqslant
  b(p)$ for all $x\in M_i$ with $d_i(p,x) < r(p)$. 
\end{prop}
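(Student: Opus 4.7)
The plan is to follow the Hörmander $L^2$-method as implemented in Donaldson--Sun~\cite{DS12}: choose a good tangent cone $C(Y)$ at $p$, produce a peaked holomorphic model section on the cone, transplant it to $M_i$ via a Gromov--Hausdorff approximation, and correct the $\bar\partial$-error using a uniform $L^2$-estimate. The tangent cone $C(Y)$ is obtained as a limit of rescalings by factors $r_k^{-1}$, and the integer power $k$ of $K_{M_i}^{-1}$ one must use is comparable to $r_k^{-2}$; since only finitely many rescalings are needed to reach the tangent cone regime at $p$, one gets an a priori bound $k\leqslant k(p)$. This is the source of the integer $k(p)$ in the statement.

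On the Ricci-flat K\"ahler cone $(C(Y)_{reg},\omega_\infty)$, the squared radial distance $\tfrac12 r^2$ is a K\"ahler potential, so the trivial line bundle equipped with the Hermitian metric $e^{-r^2}$ has curvature $2\omega_\infty$. The constant section $\mathbf 1$ has pointwise norm $e^{-r^2/2}$: its value at the vertex is $1$, it decays like a Gaussian, and hence it is $L^2$. After suitable normalization, this is the local model for the section we want to construct on $M_i$.

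To transplant the model, I would use the harmonic/holomorphic coordinates provided by Proposition~\ref{prop:Anderson} together with the assumed $L^p$ convergence of metrics on the regular part: for $i$ large, a compact subset of $C(Y)_{reg}$ embeds into $(M_i,k\omega_i)$ as an approximately isometric and approximately holomorphic image. The bundle $K_{M_i}^{-k}$ is trivializable on this image and its Hermitian metric is $L^p$-close to $e^{-r^2}$, so pulling back $\mathbf 1$ gives a near-model section there. Multiplying by the cutoff function $g$ from part~(3) of Definition~\ref{defn:goodcone} to suppress a neighborhood of $S_Y$, together with a smooth radial cutoff at large radius, yields a smooth compactly supported section $\tilde s_i$ of $K_{M_i}^{-k}$. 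The $\bar\partial$-error $\bar\partial\tilde s_i$ is concentrated where the cutoffs act: the contribution near $S_Y$ is controlled by $\|\nabla g\|_{L^2}\leqslant \eta$ combined with the smallness of $\mathbf 1$ there, and the contribution at large radius is controlled by Gaussian decay. Hörmander's $L^2$-estimate, applied to the positive bundle $K_{M_i}^{-k}$ with curvature $k\omega_i$, produces $u_i$ solving $\bar\partial u_i=\bar\partial\tilde s_i$ with $\|u_i\|_{L^2}$ as small as desired, and the corrected section $s_i := \tilde s_i - u_i$ is then holomorphic.

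The hard part is to convert the $L^2$-smallness of $u_i$ into a pointwise smallness on a uniform ball around $p$, so that $|s_i|$ inherits the model's lower bound of roughly $1$ there. Because $\bar\partial\tilde s_i$ is supported well away from $p$, the correction $u_i$ is holomorphic on a fixed neighborhood of $p$, and the standard mean value inequality for holomorphic sections (proved by Moser iteration using non-collapsing and the lower Ricci bound) upgrades $L^2$-control to $L^\infty$-control on a slightly smaller ball. The delicate point is that every constant in sight—in the Hörmander estimate, in the Hermitian approximation, and in the mean value inequality—must be chosen uniformly in $i$, which is exactly what the combination of goodness of the tangent cone, non-collapsing, and the $L^p$-convergence assumption is designed to provide. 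After normalizing $\|s_i\|_{L^2}=1$ (noting that $\|\tilde s_i\|_{L^2}$ is bounded above and below), one extracts the desired constants $b(p)$ and $r(p)$.
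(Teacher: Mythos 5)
Your proposal is a faithful sketch of the peaked-section/H\"ormander $L^2$ construction of Donaldson--Sun~\cite{DS12}, which is exactly what the paper has in mind: the paper does not prove this proposition but states it as a consequence of the main construction in \cite{DS12} (see also \cite[Proposition 7]{CDS13_2}). One imprecision worth flagging: your explanation of $k\leqslant k(p)$ via ``only finitely many rescalings'' is not quite the actual mechanism; the bound comes from fixing a definite scale $r(p)$ at which the tangent-cone approximation applies (so $k\sim r(p)^{-2}$, with some multiplicative flexibility to hit an integer), together with the need to trivialize $K_{M_i}^{-k}$ holomorphically over a compact subset of $C(Y)_{reg}$, which may force $k$ to be a multiple of an order determined by the holonomy of the limiting line bundle on the regular part of the cone --- a step your sketch asserts (``the bundle is trivializable on this image'') but does not justify.
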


A compactness and contradiction argument in \cite{DS12} then implies
the lower bound for the Bergman kernel that we need in Theorem~\ref{thm:main}. 
With this said, our main goal is to prove the following.
\begin{theorem}\label{thm:goodcones}
  Let $(M, \omega_t)$ be solutions of Equation~\ref{eq:Aubincont} for
  $t\in [0, T)$, with $T\leqslant 1$. Suppose that a sequence
  $(M,\omega_{t_i})$ converges to a limit space $(Z, d_Z)$. Then each
  tangent cone of $Z$ is good, and the metrics converge in $L^p$ on
  compact subsets of the regular part of each tangent cone, for all
  $p$. 
\end{theorem}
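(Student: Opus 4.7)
\medskip
\noindent\textbf{Proof plan.} The strategy is to run the Chen-Donaldson-Sun argument from \cite{CDS13_2, CDS13_3} with the conical K\"ahler-Einstein equation replaced by Aubin's equation $\mathrm{Ric}(\omega_t) = t\omega_t + (1-t)\alpha$. The features that allow this to go through are (a) a uniform lower Ricci bound $\mathrm{Ric}(\omega_t) \geqslant 0$ coming from the nonnegativity of the two terms on the right-hand side, (b) a uniform non-collapsing and diameter bound from the cohomology normalization $\omega_t \in c_1(M)$, so Gromov compactness and the Cheeger-Colding stratification apply to any convergent subsequence, and (c) the precise form of the right-hand side, which becomes small after rescaling at small scales. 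I would treat the cases $T < 1$ (Section~\ref{sec:lessthan1}) and $T = 1$ (Section~\ref{sec:Tis1}) separately, in parallel with \cite{CDS13_2} and \cite{CDS13_3}.

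First I would record the basic scaling observation. If one dilates by a factor $\lambda \to \infty$ to zoom in on a small ball, the rescaled metric $\lambda^2 \omega_t$ has Ricci curvature $\mathrm{Ric}(\omega_t)$, so measured against $\lambda^2 \omega_t$ the Ricci tensor equals $\lambda^{-2} t (\lambda^2 \omega_t) + (1-t)\alpha$, both of whose terms tend to zero as $\lambda\to\infty$. Hence on the rescaled sequence defining a tangent cone, the equation degenerates to the Ricci-flat K\"ahler-Einstein equation in the limit. Making this rigorous amounts to establishing (i) openness of the regular set of the tangent cone, (ii) $L^p$ convergence of the rescaled metrics to a smooth K\"ahler metric on compact subsets of the regular set, and (iii) that this limit metric satisfies the Monge-Amp\`ere equation forcing Ricci-flatness.

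The heart of the proof is an $\epsilon$-regularity theorem for solutions of \eqref{eq:Aubincont}: if a unit ball in $(M, \omega_{t_i})$ is sufficiently Gromov-Hausdorff close to the Euclidean ball $B^{2n}$, then on an interior subball one has $L^{2,p}$ bounds on the metric in a holomorphic chart, for every $p$. With a two-sided Ricci bound this is exactly Proposition~\ref{prop:Anderson}, but in our setting we only have a lower Ricci bound. I would mimic the CDS strategy and argue by reducing to the Monge-Amp\`ere form of \eqref{eq:Aubincont}: use volume comparison together with Colding's volume-convergence theorem to get a volume ratio close to Euclidean; pass to a local K\"ahler potential $\varphi$ with $\omega_t = \omega_{\mathrm{ref}} + \ddb \varphi$ on a suitable coordinate ball where a bounded reference metric exists; apply a Ko\l odziej-type $L^\infty$ estimate for the potential in this ball; and then bootstrap to higher regularity using Evans-Krylov and Schauder theory, taking advantage of the fact that the right-hand side of the equation is small at small scales by the rescaling above.

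Granted the $\epsilon$-regularity, the remaining pieces of goodness fall out: the openness of $Y_{reg}$ and the existence of the Ricci-flat K\"ahler metric on $C(Y_{reg})$ follow from applying the $\epsilon$-regularity to the sequence defining the tangent cone, while the cutoff function property of Definition~\ref{defn:goodcone}(3) is obtained from the Hausdorff codimension bound $\dim_H S_k \leqslant 2n-2k$ of \cite{CCT02}, combined with the standard Cheeger-Colding construction of cutoff functions with small Dirichlet energy on small-capacity neighborhoods. The main obstacle is the absence of a two-sided Ricci bound, which is what forces the detour through the Monge-Amp\`ere equation; and in the case $T=1$ one additionally has to control the behavior of $\omega_t$ and of the potential as $(1-t)\alpha$ degenerates, which is where the full strength of \cite{CDS13_3} (in particular the uniform Sobolev and $C^0$-type estimates along the continuity path) has to be imported and adapted.
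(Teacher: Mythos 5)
Your high-level skeleton matches the paper's: split into $T<1$ and $T=1$, prove an $\epsilon$-regularity theorem, then feed it into the Cheeger--Colding and Chen--Donaldson--Sun machinery. But the route you propose for the central $\epsilon$-regularity step is not the one the paper takes, and as written it has a circularity problem, and you also omit a case that is essential in the $T<1$ part.

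First, the circularity. You observe that after rescaling by $\lambda^2$ the Ricci form, measured against $\lambda^2\omega_t$, is $\lambda^{-2}t\cdot(\lambda^2\omega_t)+(1-t)\alpha$, and you conclude both terms go to zero. But $|(1-t)\alpha|_{\lambda^2\omega_t}=\lambda^{-2}(1-t)|\alpha|_{\omega_t}$, and you have no a priori bound on $|\alpha|_{\omega_t}$ at the point you are blowing up; indeed obtaining such a bound \emph{is} the $\epsilon$-regularity you need. The paper's Proposition~\ref{prop:Ricbound} is precisely this: assuming the volume ratio $I(B)$ is close to $1$, a rescaling/contradiction argument shows $|\alpha|_\omega\leqslant 4$ on $\tfrac12 B$, which gives a two-sided Ricci bound and lets one invoke Anderson's result directly. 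The key input is the fixed bounded geometry of $\alpha$ (curvature bound, $C^2$ control in its own charts), not a Koł{}odziej-type $L^\infty$ estimate for a K\"ahler potential. The Koł{}odziej route you sketch presupposes a good holomorphic chart and a controlled reference metric in which to write $\omega_t=\omega_{\mathrm{ref}}+\ddb\varphi$, which is exactly what one does not yet have; moreover Evans--Krylov bootstrapping needs H\"older control of the Monge--Amp\`ere right-hand side, i.e.\ of $\alpha$ in coordinates, which again is the thing to be proved.

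Second, for $T<1$ the limit space genuinely \emph{can} have real-codimension-two singularities of the form $\mathbf{C}_\gamma\times\mathbf{C}^{n-1}$ (this is what makes it parallel to \cite{CDS13_2}), so the cutoff-function property of Definition~\ref{defn:goodcone}(3) cannot be reduced to the $\dim_H S_k\leqslant 2n-2k$ bound together with a generic capacity argument. The paper handles this via the pair of density estimates (Propositions~\ref{prop:densitylower} and \ref{prop:densityupper}) for $\int\alpha\wedge\omega^{n-1}$ near conical points, substituting for the missing Euclidean monotonicity by using the monotone quantity $I(\Omega)$; these feed into the capacity argument of \cite[Section 2.8]{CDS13_2}. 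You would also need to show, as the paper does in Proposition~\ref{prop:iterated}, that for $T=1$ no such codimension-two cones occur, which hinges on the specific fact that $\int_M\alpha\wedge\omega_{t_i}^{n-1}$ is bounded while $(1-t_i)\to 0$, forcing the density of $(1-t_i)\alpha_i$ to vanish in the limit. Finally, in the $T=1$ case, the paper needs Siu's theorem on Lelong numbers and the Demailly--Koll\'ar $L^1$-convergence of $e^{-\kappa f_i}$ to control the degeneration of $(1-t)\alpha$; this is a substantive new ingredient, not just an import of \cite{CDS13_3} formulas.
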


As in \cite{CDS13_2, CDS13_3} the problem is somewhat different in the
cases when $T < 1$ and $T=1$, and we will deal with these separately
in Section~\ref{sec:lessthan1} and Section~\ref{sec:Tis1}. 

An important part of our approach is to treat $\alpha$ in
\eqref{eq:Aubincont} as a K\"ahler metric as well, and use that
$\alpha$ is fixed. In a holomorphic chart we will get control of
$\alpha$ through the following $\epsilon$-regularity result
(see Ruan~\cite{Ru99}).
\begin{prop}\label{prop:epsilonreg}
  Suppose that $\alpha$ is a K\"ahler metric on an open subset
  $U\in\mathbf{C}^n$ such that $|\mathrm{Rm}(\alpha)| < K$. There are
  constants $\epsilon_0, C > 0$
  depending on $K$, such that if $B^{Euc}_r(x)\subset U$
  and
  \[\label{eq:smallenergy}
  r^{2-2n}\int_{B^{Euc}_r(x)} \alpha\wedge \omega_{Euc}^{n-1} <
  \epsilon_0, \]
  then
  \[ \sup_{B^{Euc}_{r/2}(x)} \mathrm{tr}(\alpha) < Cr^{-2n}\int_{B^{Euc}_r(x)}
  \alpha\wedge\omega_{Euc}^{n-1} < C\frac{\epsilon_0}{r^2}, \]
  where $\mathrm{tr}$ denotes the trace with respect to
  $\omega_{Euc}$. 
\end{prop}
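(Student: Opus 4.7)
The plan is to derive a Chern--Lu type differential inequality for the trace $u := \mathrm{tr}_{\omega_{Euc}}(\alpha)$ and then extract the supremum bound via a Moser iteration whose closing step uses the smallness hypothesis to absorb the nonlinear right hand side.

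First I would reduce to the case $r=1$ by the scaling $\tilde z = z/r$, which sends $u$ to $\tilde u(\tilde z) := r^2 u(r\tilde z)$. Both the hypothesis and the conclusion become the model statement that $\int_{B_1}u\,dV_{Euc}<c_n\epsilon_0$ implies $\sup_{B_{1/2}}u\leq C\epsilon_0$. Although the rescaled metric $\tilde\alpha$ has worse curvature bound $r^{-2}K$, the pointwise differential inequality derived below is scale-invariant with the original constant $K$, so this reduction is legitimate.

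Applying the Chern--Lu inequality (Yau's Schwarz lemma calculation) to the holomorphic identity map $(B_1,\omega_{Euc})\to(B_1,\alpha)$, and using that the source is Ricci flat while the bisectional curvature of the target is bounded by $K$, yields
$$ \Delta_{\omega_{Euc}}\log u \geq -C_n K\, u, $$
from which $\Delta_{\omega_{Euc}}u \geq -C_n K\, u^2$ follows using $\Delta u = u\Delta\log u + u^{-1}|\nabla u|^2$. With this in hand I would run the standard Moser iteration: pairing $\Delta u + CKu^2\geq 0$ against $\eta^2 u^{p-1}$ for a cutoff $\eta$ between nested balls and integrating by parts gives
$$ \int|\nabla(\eta u^{p/2})|^2 \leq C(p)\int u^p|\nabla\eta|^2 + C(p)K\int\eta^2 u^{p+1}. $$
Sobolev on $\mathbf{R}^{2n}$ bounds the left side below by $\|\eta u^{p/2}\|_{L^{2n/(n-1)}}^2$, and H\"older controls the nonlinear term by $\|u\|_{L^n(B_1)}\cdot\|\eta u^{p/2}\|_{L^{2n/(n-1)}}^2$. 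Once $\|u\|_{L^n(B_1)}$ is small enough, this term is absorbed into the Sobolev term, and the resulting clean iteration $\|u\|_{L^{pn/(n-1)}(B_{s'})}\leq (Cp/(s-s')^2)^{1/p}\|u\|_{L^p(B_s)}$ climbs up to $\sup_{B_{1/2}}u \leq C\|u\|_{L^1(B_1)}$ in the standard way.

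The main obstacle is bridging the $L^1$ smallness supplied by the hypothesis to the $L^n$ smallness required to close the iteration above. I would handle this by an $\epsilon$-regularity bootstrap: after renormalizing $v := u/\|u\|_{L^1(B_1)}$ so that $\|v\|_{L^1(B_1)}=1$, the inequality becomes $\Delta v \geq -CK\|u\|_{L^1(B_1)} v^2$, whose nonlinear coefficient is forced to be small by the hypothesis. A contradiction/blow-up argument (assume a sequence $v_i$ with $\sup_{B_{1/2}}v_i\to\infty$, rescale around a near-maximum point by $\mu_i=\sqrt{\sup v_i}$ to obtain a blown-up sequence still satisfying $\Delta \tilde v_i\geq -CK\tilde v_i^2$) combined with the standard mean value inequality for nearly-subharmonic functions then yields the required uniform bound on $\sup v$, fixing the precise size of $\epsilon_0=\epsilon_0(K)$. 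Rescaling back to the original ball gives the stated estimate.
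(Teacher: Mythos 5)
Your Chern--Lu approach is essentially the harmonic-map $\epsilon$-regularity route that the paper also cites (Schoen--Uhlenbeck applied to the holomorphic, hence harmonic, identity map), and the differential inequality $\Delta_{\omega_{Euc}}\log u \geq -C_nKu$ followed by $\Delta u \geq -C_nKu^2$ is correct and scale-invariant. The Moser iteration is also set up correctly, and you have identified the right obstacle: you need to get from the $L^1$-smallness of $u$ to the $L^n$-smallness needed to absorb the critical nonlinear term.

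The gap is precisely in that bridging step. Your proposed fix --- renormalize $v=u/\|u\|_{L^1}$, note that the coefficient of $v^2$ becomes small, and run a point-picking blow-up --- does not close as written, because the $L^1$ bound is not preserved under the blow-up. If you rescale $\tilde v(y)=\mu^{-2}v(x_0+\mu^{-1}y)$ with $\mu\to\infty$, the nearly-subharmonic mean value inequality on the rescaled function only produces a lower bound of the form $\int_{B_{\rho/\mu}(x_0)}v \gtrsim \rho^{2n}\mu^{2-2n}$, which is perfectly compatible with $\|v\|_{L^1(B_1)}=1$ as $\mu\to\infty$, so no contradiction materializes. What \emph{is} preserved under the blow-up is the scale-invariant quantity $\rho^{2-2n}\int_{B_\rho}\alpha\wedge\omega_{Euc}^{n-1}$, and what one needs is that this quantity is small at the rescaled (very small) scale, not just at the top scale $r$. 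That requires the monotonicity in $\rho$ of $\rho^{2-2n}\int_{B_\rho}\alpha\wedge\omega_{Euc}^{n-1}$, which holds here because $\alpha$ is a \emph{closed positive} $(1,1)$-form on a Euclidean ball (the classical Lelong monotonicity), or equivalently the monotonicity formula for harmonic maps. The paper points to exactly this: "using also the monotonicity of $V(q,r)$ in that case." Without invoking that monotonicity, your contradiction argument is incomplete; with it, the point-picking argument closes immediately (and you don't actually need the full Moser iteration machinery --- a single mean value inequality at the rescaled scale suffices, which is how the paper argues via its Proposition~\ref{prop:epsilonreg2}).
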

\begin{proof}
  This follow from the $\epsilon$-regularity for harmonic maps
  (Schoen-Uhlenbeck~\cite{SU82}),
  applied to the identity map $(U, \omega_{Euc})\to (U,
  \alpha)$. The result also follows from the argument in
  Proposition~\ref{prop:epsilonreg2}. Indeed when $B$ is a Euclidean
  ball, then the hypotheses are all satisfied, using also the
  monotonicity of $V(q,r)$ in that case. 
\end{proof}

Following Chen-Donaldson-Sun~\cite{CDS13_3} we introduce the invariant 
\[ I(\Omega) = \inf_{B(x,r)\subset \Omega} VR(x,r) \]
for any domain $\Omega$, where $VR(x,r)$ is the ratio of the volumes
of the ball $B(x,r)$ and the Euclidean ball $rB^{2n}$. Note that by
Colding's volume convergence result, together with Bishop-Gromov
volume comparison, if $B$ is any unit ball in a manifold with non-negative
Ricci curvature, then $1-I(B)$ is controlled by the Gromov-Hausdorff
distance $d_{GH}(B,B^{2n})$, and it controls
$d_{GH}(\rho B, \rho B^{2n})$ for any $\rho < 1$. We will
need to obtain several variants of Proposition~\ref{prop:epsilonreg},
where the crucial monotonicity property of the quantity in
\eqref{eq:smallenergy} is missing. Instead, the monotonicity of
$I(\Omega)$ will be used.

\section{The case $T < 1$}\label{sec:lessthan1}
Suppose that $\alpha$ is a fixed K\"ahler form on $M$, and $0 < T_0 <
T_1 < 1$. Suppose that for some $t\in (T_0, T_1)$ we have 
\[ \mathrm{Ric}(\omega_t) = t\omega_t + (1-t)\alpha. \]
To study the structure of the Gromov-Hausdorff limit of a sequence of
such $\omega_{t_i}$ we need to study small balls with respect to these
metrics, scaled to unit size. A scaling $\widetilde{\omega}_t =
\Lambda\omega_t$  satisfies
\[ \mathrm{Ric}(\widetilde{\omega}_t) =
\Lambda^{-1}t\widetilde{\omega_t} + (1-t)\alpha. \]

Because of this, in 
this section we work with a unit ball $B=B(p,1)$ in a K\"ahler manifold
with metric $\omega$, such that
\[ \label{eq:Ricomega} \mathrm{Ric}(\omega) = \lambda\omega + \alpha, \]
where $\lambda\in (0,1]$ and $\alpha$ is a K\"ahler form on $B(p,1)$
which we control in the following sense. There is a number $K > 0$,
such that on each $\alpha$-ball $B_\alpha \subset B(p,1)$ of radius at
most $K^{-1}$ (in the $\alpha$-metric)
we have holomorphic coordinates, relative to which  
\[ \label{eq:alphabounds}\begin{gathered}
  \frac{1}{2}\delta_{j\bar k} < \alpha_{j\bar k} < 2\delta_{j\bar k}
  \\ 
  \Vert \alpha_{j\bar k} \Vert_{C^2} < K. 
\end{gathered} \] 
In addition we can assume that $B(p,1)$ is non-collapsed,
i.e. $\mathrm{V}(B(p,1)) > c > 0$. Note that this ball $B$ is a scaled
up version of a small ball with respect to a metric $\omega_t$ along
the continuity method. We obtain the above estimates for any such ball
with $c, K$ depending on $T_0, T_1$, as long as
$t\in (T_0, T_1)$.

One of the key new results is the following, which essentially
shows that at points in the regular set $Z_{reg}$ we are in the
setting of bounded Ricci curvature. The analogous result for conical
K\"ahler-Einstein metrics, \cite[Proposition 3]{CDS13_2}, is
straightforward because $I(B)$ is bounded away from 1 by a definite
amount for balls centered at a conical singularity.  The proof of the
result is reminiscent of arguments in Carron~\cite{Car10}. 
\begin{prop}\label{prop:Ricbound}
There is a $\delta > 0$, depending on $K$ above,
 such that if $1 - I(B) < \delta$, then 
\[ |\mathrm{Ric}(\omega)| < 5, \text{ on } \frac{1}{2}B. \]
\end{prop}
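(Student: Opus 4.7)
The lower bound $\mathrm{Ric}(\omega)\geq -5\omega$ is immediate since $\mathrm{Ric}(\omega)=\lambda\omega+\alpha\geq 0$. The substance is the upper bound, and because $\lambda\leq 1$ this reduces to showing that $\mathrm{tr}_\omega\alpha\leq 4$ pointwise on $\tfrac12 B$. My main analytic tool is the Chern--Lu inequality for the identity map $\mathrm{id}\colon(B,\omega)\to(B,\alpha)$: using $\mathrm{Ric}(\omega)\geq 0$ and that the bisectional curvature of $\alpha$ is bounded by a constant depending on $K$ (via the $C^2$-control in \eqref{eq:alphabounds}), one obtains
\[
\Delta_\omega \log \mathrm{tr}_\omega\alpha \;\geq\; -C_0\,\mathrm{tr}_\omega\alpha
\]
on $B$, for some $C_0=C_0(K)$. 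The plan is to couple this with an $L^1$-smallness bound for $\mathrm{tr}_\omega\alpha$ and conclude by a small-energy Moser iteration.

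For the integral estimate, I would write
\[
\int_B \mathrm{tr}_\omega\alpha\,dV_\omega \;=\; c_n\int_B\alpha\wedge\omega^{n-1} \;=\; c_n\int_B \mathrm{Ric}(\omega)\wedge\omega^{n-1} - c_n\lambda\int_B\omega^n,
\]
and estimate $\int_B\mathrm{Ric}(\omega)\wedge\omega^{n-1}$ by writing $\mathrm{Ric}(\omega)=-\ddb\log\omega^n$ in a local chart and integrating by parts twice against a cutoff supported in $B$. Since $1-I(B)<\delta$, Colding's volume convergence together with Bishop--Gromov puts $B$ close to the Euclidean unit ball in the Gromov--Hausdorff sense, and the Cheeger--Colding almost-rigidity theorem yields coordinates in which $\log(\omega^n/\omega_{\mathrm{Euc}}^n)$ is small in an $L^p$-sense after subtracting an averaged constant. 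The two integrations by parts shift both derivatives onto the cutoff and leave a term controlled by this smallness, giving $\int_B\mathrm{tr}_\omega\alpha\,dV_\omega<\varepsilon(\delta)$ with $\varepsilon(\delta)\to 0$ as $\delta\to 0$.

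The main obstacle is precisely this integral bound: because we do not yet have an upper Ricci bound, Proposition~\ref{prop:Anderson} cannot be invoked to provide genuine harmonic or holomorphic coordinates on $B$, and the needed charts have to be produced purely from the volume-rigidity of Bishop--Gromov together with the smallness of the $L^2$-norm of $\nabla\log(\omega^n/\omega_{\mathrm{Euc}}^n)$ — this is the step whose spirit matches Carron~\cite{Car10}. Once the $L^1$-smallness of $\mathrm{tr}_\omega\alpha$ is in hand, a small-energy Moser iteration on the Chern--Lu inequality (using the uniform Neumann--Sobolev inequality available on $B$ from non-collapsing together with $\mathrm{Ric}(\omega)\geq 0$, and absorbing the quadratic right-hand side $C_0\mathrm{tr}_\omega\alpha$ into the good term by the smallness of the $L^1$-norm) yields the pointwise bound $\mathrm{tr}_\omega\alpha\leq 4-\lambda$ on $\tfrac12 B$, completing the proof.
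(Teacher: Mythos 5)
Your proposal takes a genuinely different route from the paper's, and unfortunately the route has a gap that I do not see how to close.

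The central problem is your claimed $L^1$-smallness, $\int_B\mathrm{tr}_\omega\alpha\,dV_\omega<\varepsilon(\delta)$ with $\varepsilon(\delta)\to 0$. This is stronger than what is true. The conclusion of the proposition is an \emph{order-one} pointwise bound $|\mathrm{Ric}(\omega)|<5$, not a small one, and correspondingly the only integral estimate the paper obtains (later, in Proposition~\ref{prop:densityupper}) is a fixed upper bound $\int_{B(p,1/2)}\alpha\wedge\omega^{n-1}<A$, never a bound tending to zero. Indeed nothing prevents $\alpha$ from being comparable to $\omega$ on the whole ball while $1-I(B)$ is tiny, in which case $\int_B\mathrm{tr}_\omega\alpha\,dV_\omega$ is of order one. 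This matters because the Chern--Lu inequality you invoke has a quadratic right-hand side, $\Delta_\omega u\geq -C_0 u^2$ for $u=\mathrm{tr}_\omega\alpha$, so the Moser/small-energy argument needs genuinely small energy on the scale you iterate at; an order-one $L^1$ bound does not yield an order-one pointwise bound. You acknowledge a second obstruction yourself --- that without an upper Ricci bound you cannot invoke Proposition~\ref{prop:Anderson} to produce the coordinates in which you want to write $\mathrm{Ric}(\omega)=-\ddb\log(\omega^n/\omega_{\mathrm{Euc}}^n)$ and integrate by parts --- and indeed Colding's volume convergence and Cheeger--Colding almost-rigidity give metric-measure information, not an a priori $L^1$ or $W^{1,2}$ control on the logarithm of the volume density in coordinates. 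So both legs of your argument (the energy bound and the coordinates needed to prove it) remain open.

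The paper resolves the chicken-and-egg problem differently, by a pointwise rescaling rather than an integral estimate. One sets $M=\sup_B d_x^2|\alpha(x)|_\omega$ and, if $M>1$, rescales at a point $q$ realizing the supremum so that on the rescaled unit ball $\widetilde B$ one has $|\alpha|_{\widetilde\omega}\leq 1$ everywhere and $|\alpha(q)|_{\widetilde\omega}=1/4$. The key point is that \emph{after this rescaling the Ricci curvature of $\widetilde\omega$ is automatically two-sided bounded} (since $\alpha\leq\widetilde\omega$), so Anderson's result applies on $\widetilde B$ and supplies good holomorphic coordinates without any integral input. The non-degeneracy $|\alpha(q)|_{\widetilde\omega}=1/4$ then produces a direction in which $\alpha$ --- hence the radial Ricci curvature of $\widetilde\omega$ --- has a definite positive lower bound on a ball of uniform size, and Bishop--Gromov applied to the corresponding spherical sector forces a definite volume deficit $VR(\rho\widetilde B)<1-\epsilon$. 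By the monotonicity built into $I(B)$ this contradicts $1-I(B)<\delta$ for $\delta$ small, so $M\leq 1$, giving the pointwise bound directly. If you want to keep a PDE flavor, note that the Chern--Lu/Schwarz-lemma observation is consonant with the paper's use of $\mathrm{Ric}(\omega)\geq\alpha$, but the mechanism that extracts the bound is Bishop--Gromov monotonicity of the volume ratio, not an $\varepsilon$-regularity iteration.
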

\begin{proof}
  We will bound $\alpha$, which is equivalent to bounding
  $\mathrm{Ric}(\omega)$.  Suppose that
 \[ \sup_B d_x^2 |\alpha(x)|_\omega = M, \] 
 where $d_x$ denotes the distance to the boundary of the ball, 
  and suppose that the supremum is achieved at $q\in B$. If $M > 1$,
  we can consider the ball
  \[ B\left(q, \frac{1}{2}d_qM^{-1/2}\right), \]
  scaled to unit size $\widetilde{B}$ with metric
  $\widetilde{\omega} = 4Md_q^{-2}\omega$. By construction, we have
\[ \begin{gathered}
          |\alpha|_{\widetilde{\omega}} \leqslant 1 \text{ on }
          \widetilde{B}, \\
          |\alpha(q)|_{\widetilde{\omega}} = \frac{1}{4}.  
\end{gathered}\]
In particular we have $\alpha\leqslant
\widetilde{\omega}$ on $\widetilde{B}$, and using the equation
satisfied by $\widetilde{\omega}$ we have
$|\mathrm{Ric}(\widetilde{\omega})| < 2$ on $\widetilde{B}$. At the
same time we have a unit vector
$v$ at $q$ (with respect to $\widetilde{\omega}$), such that
\[ \alpha(v,\bar{v}) \geqslant \frac{1}{16 n}, \]
otherwise the norm of $\alpha(q)$ would be too small. 

From the bound on $\mathrm{Ric}(\widetilde{\omega})$ on
$\widetilde{B}$, and Anderson's result, we have holomorphic
coordinates $\{z^1,\ldots,z^n\}$ on the ball $\theta\widetilde{B}$,
with respect to which the components of $\widetilde{\omega}$ are
controlled in $C^{1,\alpha}$. We can assume that $v=\partial_{z^1}$ at
$q$.  We can also assume without loss of
generality that $K^{-1} < \theta$, so we have good holomorphic
coordinates $\{w^1,\ldots,w^n\}$ for $\alpha$ on
$K^{-1}\widetilde{B}$ (meaning that the components of $\alpha$ are
controlled in $C^2$ as in \eqref{eq:alphabounds}),
since this is contained in a $K^{-1}$-ball with
respect to $\alpha$. The $w^i$ are holomorphic functions of the $z^i$,
and $|w_i| < 2K^{-1}$, so we obtain bounds on the derivatives
$\partial w^i/ \partial z^j$ in $\frac{1}{2}K^{-1}\widetilde{B}$. The upshot is that we can find a ball of a
definite size $\rho\widetilde{B}$, on which
\[ \alpha(\xi,\overline{\xi}) \geqslant \frac{1}{40n} \]
for any unit vector $\xi$ with angle $\angle(\xi,\partial_{z^1})
< \pi / 4$ (let us identify $T^{1,0}M$ with $TM$ in the usual way).
Making $\rho$ smaller if necessary, 
we can also assume that on $\rho\widetilde{B}$, if
$\gamma(t)$ is a unit speed geodesic with $\gamma(0)=0$ and
$\angle(\dot{\gamma}(0), \partial_{z^1}) < \pi /8$, then
$\angle(\dot{\gamma}(t), \partial_{z^1}) < \pi / 4$ for $t <
\rho$. Indeed, we have
\[ \frac{d}{dt} \langle \dot\gamma(t), \partial_{z^1}\rangle = \langle
\dot\gamma(t), \nabla_{\dot\gamma(t)} \partial_{z^1}\rangle, \]
and the covariant derivative of $\partial_{z^1}$ is bounded since in
the $z^i$ coordinates the components of $\widetilde{\omega}$ are
controlled in $C^{1,\alpha}$. It follows that we have a uniform bound
on the derivative of the angle
$\angle(\dot{\gamma}(t), \partial_{z^1})$ along the geodesic $\gamma$.

Since $\mathrm{Ric}(\widetilde{\omega}) > \alpha$,
this gives a positive lower bound on the radial component of
$\mathrm{Ric}(\widetilde{\omega})$ in these directions, and 
we can apply Bishop-Gromov volume comparison to the
corresponding spherical sector in $\rho\widetilde{B}$ (in the relevant
Bochner formula, the lower bound on the Ricci curvature is only
required in the radial directions). 
From this we obtain that the volume ratio
\[ VR(\rho\widetilde{B}) < 1 - \epsilon \]
for some small (but definite) $\epsilon > 0$, which is a contradiction
if we choose $\delta$ small enough in our assumptions. It follows that
if $\delta$ is sufficiently small, then $M \leqslant 1$, which implies
that $|\alpha|_\omega < 4$ in $\frac{1}{2}B$, and so $\alpha <
4\omega$ there. It follows that $\mathrm{Ric}(\omega) < 5\omega$ on
$\frac{1}{2}B$.  
\end{proof}

This proposition, together with Anderson's result, implies the 
following.  
\begin{prop}\label{prop:convergetoB}
Suppose that we have a sequence of unit balls $B(p_i,1)$, with
$\omega_i, \alpha_i$ satisfying the same conditions as $\omega,
\alpha$ above. Suppose that the $B(p_i,1)$, with the metrics
$\omega_i$ converge to the
Euclidean ball $B^{2n}$ in the Gromov-Hausdorff sense. Then the
convergence is $C^{1,\alpha}$ on compact subsets. 
\end{prop}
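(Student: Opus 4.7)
My plan is to reduce to the setting of bounded Ricci curvature via Proposition~\ref{prop:Ricbound}, and then invoke Anderson's theorem (Proposition~\ref{prop:Anderson}) in its K\"ahler form. The central point to verify is that for any compact subset of $B^{2n}$, the volume-ratio hypothesis $1 - I < \delta$ of Proposition~\ref{prop:Ricbound} holds uniformly on small balls centred near that subset after rescaling them to unit size.

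First I would fix a compact subset $K \subset B^{2n}$ and a small $\delta > 0$. The Gromov--Hausdorff convergence $B(p_i,1) \to B^{2n}$ together with Colding's volume convergence theorem~\cite{Col97} yields a radius $r > 0$ such that, for $i$ large, the volume ratio $VR_{\omega_i}(q_i, r)$ tends to $1$ uniformly in approximants $q_i$ of points of $K$. Since $\mathrm{Ric}(\omega_i) = \lambda_i\omega_i + \alpha_i > 0$, the Bishop--Gromov comparison theorem gives that $VR_{\omega_i}(q_i, s)$ is non-increasing in $s$, so $VR_{\omega_i}(q_i, s) > 1 - \delta$ for every $s \leqslant r$. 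Applied at every centre of a subball of $B_{\omega_i}(q_i, r)$ (which is automatically close to $K$), this shows $I(B_{\omega_i}(q_i, r)) > 1 - \delta$ uniformly in~$i$.

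Next I would rescale by setting $\tilde\omega_i = r^{-2}\omega_i$, so the equation becomes $\mathrm{Ric}(\tilde\omega_i) = (r^2\lambda_i)\tilde\omega_i + \alpha_i$ with the coefficient of $\tilde\omega_i$ bounded by $1$, while the control~\eqref{eq:alphabounds} on $\alpha_i$ is unchanged since it refers to the fixed $\alpha$-metric. Choosing $\delta$ smaller than the threshold supplied by Proposition~\ref{prop:Ricbound}, we get $|\mathrm{Ric}(\tilde\omega_i)| < 5$ on the half-ball. Proposition~\ref{prop:Anderson}, combined with the K\"ahler/Newlander--Nirenberg refinement recalled after it, then produces holomorphic coordinates on a ball of definite size centred at $q_i$ in which $\tilde\omega_i$ is bounded in $L^{2,p}$ for every $p$, hence in $C^{1,\alpha}$ for every $\alpha \in (0,1)$.

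Finally, covering a compact subset of $B^{2n}$ by finitely many such coordinate balls, I would extract via Arzel\`a--Ascoli a $C^{1,\alpha}$ subsequential limit of $\omega_i$ in these charts. The limit is a K\"ahler metric inducing the flat distance, hence equal to $\omega_{Euc}$; uniqueness of the limit upgrades the convergence from subsequential to full and gives the claim. The step I expect to require the most care is the volume-ratio transfer in the first paragraph: one must apply Bishop--Gromov and Colding's theorem uniformly enough across varying centres and scales to verify the hypothesis of Proposition~\ref{prop:Ricbound} at every base point of interest. Once this uniform control is achieved, the remaining steps are standard, since the real analytic heavy lifting is already packaged inside Proposition~\ref{prop:Ricbound}.
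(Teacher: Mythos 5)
Your proof is correct and follows exactly the paper's (unstated) route: the paper merely asserts that Proposition~\ref{prop:convergetoB} is a consequence of Proposition~\ref{prop:Ricbound} together with Anderson's result, Proposition~\ref{prop:Anderson}. Your fill-in --- verifying the volume-ratio hypothesis $1 - I < \delta$ on interior sub-balls rescaled to unit size via Colding's volume convergence theorem and Bishop--Gromov monotonicity, then invoking the K\"ahler version of Anderson's harmonic-coordinate theorem and an Arzel\`a--Ascoli/uniqueness-of-limit argument --- is the standard and intended way to supply the omitted details.
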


\begin{prop}
  If $B(p_i,1)\to Z$ in the Gromov-Hausdorff sense,
  then the regular set in $Z$ is
   open, and the convergence on the regular set is locally $C^{1,\alpha}$. 
\end{prop}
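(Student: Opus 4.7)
The plan is to upgrade the Gromov-Hausdorff convergence to $C^{1,\alpha}$ convergence of metrics near each regular point by reducing to the hypothesis of Proposition~\ref{prop:Ricbound}, and then invoking Anderson's result (Proposition~\ref{prop:Anderson}) together with the K\"ahler upgrade to holomorphic coordinates explained after its statement.

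Fix $q\in Z_{reg}$. Since some tangent cone at $q$ is $\mathbf{C}^n$, and since Bishop-Gromov monotonicity together with Colding's volume convergence both pass to the limit space $Z$, we have $\lim_{r\to 0} VR_Z(q,r)=1$; equivalently, the rescaled ball $r^{-1}B(q,r)$ is arbitrarily close to the Euclidean unit ball $B^{2n}$ in Gromov-Hausdorff distance as $r\to 0$. Let $\delta>0$ be the constant furnished by Proposition~\ref{prop:Ricbound}. A compactness and contradiction argument, using Colding's volume convergence now applied to sub-balls, shows that there is an $\eta>0$ such that for any unit ball $B$ in a K\"ahler manifold satisfying our standing hypotheses with $d_{GH}(B,B^{2n})<\eta$, one has $1-I(B)<\delta$: otherwise one extracts a sequence of offending sub-balls whose GH limit sits inside $B^{2n}$ and has Euclidean volume ratio strictly less than $1$, a contradiction.

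Choose $r_0\in(0,1]$ for which $d_{GH}(r_0^{-1}B(q,r_0),B^{2n})<\eta/2$, and take $q_i\in M_i$ with $q_i\to q$. The rescaled metrics $\widetilde{\omega}_i := r_0^{-2}\omega_i$ satisfy
\[ \mathrm{Ric}(\widetilde{\omega}_i) = (r_0^2 \lambda_i)\,\widetilde{\omega}_i + \alpha, \]
with $r_0^2\lambda_i\in(0,1]$; moreover the control \eqref{eq:alphabounds} on $\alpha$ is preserved, since the $\alpha$-charts of radius $K^{-1}$ appear even larger with respect to $\widetilde{\omega}_i$ than with respect to $\omega_i$. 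For $i$ sufficiently large, $d_{GH}(r_0^{-1}B(q_i,r_0),B^{2n})<\eta$, hence $1-I(r_0^{-1}B(q_i,r_0))<\delta$, and Proposition~\ref{prop:Ricbound} yields $|\mathrm{Ric}(\widetilde{\omega}_i)|<5$ on $r_0^{-1}B(q_i,r_0/2)$.

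Now Proposition~\ref{prop:Anderson} with the K\"ahler upgrade, applied to the rescaled unit ball $r_0^{-1}B(q_i,r_0)$, supplies holomorphic charts of a definite size $\theta$ at every point of $r_0^{-1}B(q_i,r_0/2)$ in which the components of $\widetilde{\omega}_i$ are uniformly bounded in $L^{2,p}$ for all $p$, hence in $C^{1,\alpha}$. An Arzela-Ascoli argument produces $C^{1,\alpha}$ convergence on compact subsets, the limiting metric being $\omega_Z$ by uniqueness of the K\"ahler metric inducing $d_Z$. Scaling back, this yields $B(q,r_0/2)\subset Z_{reg}$ together with $C^{1,\alpha}$ convergence of $\omega_i$ there; covering any compact subset of $Z_{reg}$ by finitely many such balls then delivers the full statement. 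The key technical point is the compactness and contradiction argument in the second paragraph, where volume convergence must hold uniformly over the family of sub-balls of nearly-Euclidean K\"ahler balls satisfying our Ricci hypothesis, rather than only at a fixed basepoint; once that is in hand, the remainder of the proof is a formal matter of assembling Propositions~\ref{prop:Ricbound} and \ref{prop:Anderson}.
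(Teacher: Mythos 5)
Your overall route is the one the paper has in mind: the proposition is stated without proof, immediately after Proposition~\ref{prop:convergetoB}, precisely because it is meant to be the local, rescaled application of Proposition~\ref{prop:Ricbound} together with Anderson's result at each regular point of $Z$, and that is what you assemble.

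However, the compactness argument you single out as the key technical point has a genuine gap. You claim that a sequence of offending sub-balls $B(x_j,r_j)\subset B_j$ with $VR(x_j,r_j)\leqslant 1-\delta$ must have a Gromov--Hausdorff limit sitting inside $B^{2n}$ with Euclidean volume ratio less than one. But nothing prevents $r_j\to 0$ with $x_j$ converging to a boundary point of $B^{2n}$, in which case the offending balls simply shrink away and you extract no contradiction: GH-closeness of $B_j$ to $B^{2n}$ at scale $\delta_j$ carries no volume information at scales much smaller than $\delta_j$, and Bishop--Gromov monotonicity only propagates a volume deficit downward in $r$, not upward. The way to make the reduction correct (and consistent with the paper's own remark when it introduces $I(\Omega)$) is to control $I(\rho B)$ for a fixed $\rho<1$, say $\rho=\frac{3}{4}$: for $x\in \frac{3}{4}B$ one has $B(x,\frac{1}{4})\subset B$, so Colding's volume convergence applied at the fixed scale $\frac{1}{4}$ gives $VR(x,\frac{1}{4})\geqslant 1-\Psi(d_{GH}(B,B^{2n}))$, and Bishop--Gromov monotonicity then yields $VR(x,r)\geqslant 1-\Psi$ for all $r\leqslant\frac{1}{4}$, covering every sub-ball of $\frac{3}{4}B$. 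One then notes that the contradiction ball produced in the proof of Proposition~\ref{prop:Ricbound} lies inside $\frac{3}{4}B$ once the quantity $\sup_B d_x^2|\alpha(x)|_\omega$ is replaced by the analogous supremum over $\frac{3}{4}B$ with $d_x$ the distance to $\partial(\frac{3}{4}B)$, so one still obtains a two-sided Ricci bound on $\frac{1}{2}B$ (with a slightly worse constant) and Anderson's result applies as you intended.
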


Next we examine the situation when $B_i(p_i,1)\to Z$, with $p_i\to p$,
and a tangent cone at $p\in Z$ is of the form
$\mathbf{C}_\gamma\times\mathbf{C}^{n-1}$, where $\mathbf{C}_\gamma$
denotes the flat cone with cone angle $2\pi\gamma$.  It follows from the previous
proposition that a sequence of scaled balls converges to
$\mathbf{C}_\gamma\times\mathbf{C}^{n-1}$ in $C^{1,\alpha}$, uniformly on
compact sets away from the singular set. The results of
Chen-Donaldson-Sun~\cite{CDS13_2} about good tangent cones can
therefore be applied. In particular the arguments in
\cite[Section 2.5]{CDS13_2} imply that for sufficiently large $k$,
with $i$ large enough (depending on $k$),  we can
regard $k\omega_i$ as a metric on the unit ball
$B^{2n}\subset\mathbf{C}^n$. We use co-ordinates $(u, v_1,\ldots,
v_{n-1})$, and let $\eta_\gamma$ be the conical metric
\[ \eta_\gamma = \sqrt{-1}\frac{du\wedge
  d\overline{u}}{|u|^{2-2\gamma}} + \sqrt{-1}\sum_{i=1}^{n-1}
dv_i\wedge d\overline{v}_i. \]
The metric $\widetilde{\omega}_i = k\omega_i$ then satisfies, for some
fixed constant $C$:
\begin{enumerate}
\item 
  $\widetilde{\omega}_i = \ddb\phi_i$ with $0\leqslant\phi_i\leqslant
  C$,
\item 
  $\omega_{Euc} < C\widetilde{\omega}_i$,
\item
  Given $\delta$ and a compact set $K$ away from $\{u=0\}$, we can
  suppose (by taking $i$ large once $k$ is chosen sufficiently large)
   that $|\widetilde{\omega}_i - \eta_\gamma|_{C^{1,\alpha}}
  < \delta$ on $K$. 
\end{enumerate}

\begin{prop}\label{prop:conebound}
  There are $0 < \gamma_1 < \gamma_2 < 1$ with the following property. If
  $B(p_i,1)\to Z$, with $p_i\to p$, and a tangent cone at $p$
  is $\mathbf{C}_\gamma\times
  \mathbf{C}^{n-1}$, then $\gamma\in (\gamma_1,\gamma_2)$. 
\end{prop}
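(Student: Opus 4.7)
The plan is to establish the two bounds separately, each by a contradiction argument.

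For the lower bound $\gamma>\gamma_1$, the key input is Bishop--Gromov combined with the non-collapsing hypothesis. Since $\mathrm{Ric}(\omega)=\lambda\omega+\alpha\geq 0$, the volume ratio $VR(p_i,r)$ is non-increasing in $r$ on each $M_i$. The non-collapsing assumption gives $VR(p_i,1)\geq c_0>0$ uniformly, so $VR(p_i,r)\geq c_0$ for all $r\leq 1$. By Colding's volume continuity these bounds pass to $Z$ at $p$; the same monotonicity in $Z$ then lets one pass to the tangent cone, where the volume ratio at the vertex of $\mathbf{C}_\gamma\times\mathbf{C}^{n-1}$ equals $\gamma$ at every scale. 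Hence $\gamma\geq c_0$, and one may take $\gamma_1:=c_0/2$.

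For the upper bound $\gamma<\gamma_2$, assume for contradiction that along some subsequence $\gamma_j\to 1$. Using the rescaled metrics $\widetilde{\omega}^{(j)}$ on $B^{2n}$ approximating $\eta_{\gamma_j}$ as in conditions (1)--(3) above, Colding's volume convergence together with the identity $VR(B_{\eta_{\gamma_j}}(0,1))=\gamma_j$ forces $I(B^{2n},\widetilde{\omega}^{(j)})\to 1$. For $j$ large Proposition~\ref{prop:Ricbound} then yields the two-sided bound $|\mathrm{Ric}(\widetilde{\omega}^{(j)})|<5$ on the half-ball. Now fix a point $q$ on the singular stratum $\{u=0\}\times\mathbf{C}^{n-1}$ of the limit cone inside the half-ball, and lift it to points $q_j\in B^{2n}$. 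Since $\mathbf{C}_{\gamma_j}\times\mathbf{C}^{n-1}$ is Gromov--Hausdorff close to $\mathbf{R}^{2n}$ on every fixed scale as $\gamma_j\to 1$, Proposition~\ref{prop:Anderson} applies at $q_j$, producing holomorphic charts on a ball of definite size in which $\widetilde{\omega}^{(j)}$ has $L^{2,p}$-bounded components for every $p$. Passing to the $C^{1,\alpha}$ limit one would obtain a $C^{1,\alpha}$ coordinate presentation of $\eta_{\gamma_j}$ in a neighborhood of a point of $\{u=0\}$. But for any $\gamma<1$ the component $g_{u\bar u}=|u|^{-2+2\gamma}$ has derivatives that fail to lie in $L^p$ near $\{u=0\}$ for $p$ sufficiently large, contradicting Anderson's bound.

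The main obstacle is the upper bound, where one has to match two thresholds simultaneously: the constant $\delta$ in Proposition~\ref{prop:Ricbound} controls how close $1-I(B^{2n})$ must be to zero for the two-sided Ricci bound, while the analogous constant in Proposition~\ref{prop:Anderson} controls how close the Gromov--Hausdorff distance at $q_j$ must be to the Euclidean ball. Both are arranged by taking $\gamma_j$ close to $1$, after which the contradiction reduces to the explicit non-regularity of $\eta_\gamma$ at the apex for $\gamma<1$. A minor subtlety is that Proposition~\ref{prop:Ricbound} must be applied in the rescaled setting, which requires checking that the $\alpha$-hypotheses (\ref{eq:alphabounds}) survive the rescaling; but under rescaling the $\alpha$ term becomes small in the relevant norms, so the hypotheses are in fact comfortably satisfied.
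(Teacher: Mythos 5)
Your lower bound argument is correct and matches the paper's one-line invocation of non-collapsing: Bishop--Gromov monotonicity plus $VR(p_i,1)\geq c_0$ passes through Colding's volume continuity to give $\gamma\geq c_0$ at the apex. For the upper bound the core mechanism is the same as the paper's (Proposition~\ref{prop:Ricbound} gives a two-sided Ricci bound once $\gamma$ is close enough to $1$ that $1-I(B)<\delta$, exploiting the Gromov--Hausdorff closeness of the unit ball in $\mathbf{C}_\gamma\times\mathbf{C}^{n-1}$ to $B^{2n}$), but your concluding step diverges from the paper's. The paper, having obtained the two-sided Ricci bound, simply cites the Cheeger--Colding--Tian theorem that codimension-2 singularities cannot appear in the limit. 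You instead apply Anderson's harmonic coordinate estimate, Proposition~\ref{prop:Anderson}, near a lift $q_j$ of a singular point and argue directly that $C^{1,\alpha}$ convergence is incompatible with a conical limit. This is a valid and arguably more self-contained route (you are essentially unpacking the piece of \cite{CCT02} that is needed), but the way you phrase the final contradiction is not quite clean: Anderson's $L^{2,p}$ bounds live in harmonic or holomorphic coordinates adapted to $\widetilde{\omega}^{(j)}$, whereas the blow-up $g_{u\bar u}=|u|^{-2+2\gamma}$ lives in the ambient $(u,v)$-chart, and one cannot compare $L^p$-norms of metric coefficients across unrelated coordinate systems. The correct coordinate-free version of your contradiction is either that a $C^{1,\alpha}$ limit is locally a $C^{1,\alpha}$ Riemannian manifold and hence has Euclidean tangent cones at every point, contradicting that $\mathbf{C}_\gamma\times\mathbf{C}^{n-1}$ has a non-Euclidean tangent cone at the apex for $\gamma<1$; or that $\mathbf{C}_\gamma$ carries a distributional Gauss--Bonnet defect $2\pi(1-\gamma)\neq 0$ which obstructs any $C^{1,\alpha}$ Riemannian presentation. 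With that clarification your argument is sound.
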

\begin{proof}
  The existence of $\gamma_1$ follows from the volume non-collapsing
  assumption. For the upper bound we use
  Proposition~\ref{prop:Ricbound}
  and the fact
  that the unit balls in $\mathbf{C}_\gamma\times\mathbf{C}^{n-1}$
  converge to the Euclidean ball in the Gromov-Hausdorff sense as
  $\gamma\to 1$. In particular if $B(p_i,1)\to Z$ and a tangent
  cone at $p\in Z$ is $\mathbf{C}_\gamma\times
  \mathbf{C}^{n-1}$ for $\gamma$ sufficiently close to $1$, then
  for large $i$ we can find small balls around $p_i$, which when
  scaled to unit size are sufficiently close to the Euclidean ball for
  Proposition~\ref{prop:Ricbound} to imply that the Ricci curvature is
  bounded. In this case, however, there cannot be singularities of the
  form $\mathbf{C}_\gamma\times\mathbf{C}^{n-1}$ in the
  Gromov-Hausdorff limit according to
  Cheeger-Colding-Tian~\cite{CCT02}. This is a contradiction. 
\end{proof}

\begin{prop}\label{prop:densitylower}
  Let us denote by $\underline{0}$ the vertex in the cone
  $\mathbf{C}_\gamma\times \mathbf{C}^{n-1}$ for some $\gamma\in
  (\gamma_1,\gamma_2)$. There are constants $c_0, \delta > 0$ such
  that if $d_{GH}(B(p,1), B(\underline{0}, 1)) < \delta$, then 
  \[ \int_{B(p,1)} \alpha\wedge \omega^{n-1} > c_0. \]
\end{prop}
\begin{proof}
  We argue by contradiction. Suppose that $B(p_i, 1)$ is a sequence of
  unit balls in K\"ahler manifolds as above with corresponding forms
  $\alpha_i$, such that
  \[ \label{eq:dGH1} d_{GH}(B(p_i, 1), B(\underline{0}, 1)) \to 0. \]
  From the discussion above, once $i$ is sufficiently large, then
  for a fixed small $r_0 > 0$ we can view the scaled up metrics
  $r_0^{-2}\omega_i$ as  metrics on $B^{2n}$ such that
  \[ \label{eq:omegailower} \omega_{Euc} < Cr_0^{-2}\omega_i. \]
If, with $\epsilon_0$ from Proposition~\ref{prop:epsilonreg}
\[ \int_{B^{2n}} \alpha_i \wedge \omega_{Euc}^{n-1} < \epsilon_0, \]
then $\alpha_i < C'\omega_{Euc} < C'Cr_0^{-2}\omega_i$ on
$\frac{1}{2}B^{2n}$.  In this case $r_0^{-2}\omega_i$ has bounded Ricci
curvature, and so by \cite{CCT02}, no codimension two conical
singularity can form in the limit. This contradicts \eqref{eq:dGH1}
and so we must have  
\[ \int_{B^{2n}} \alpha_i \wedge \omega_{Euc}^{n-1} \geq
\epsilon_0 \]
for large $i$. Together with \eqref{eq:omegailower} this gives the
required result. 
\end{proof}

\begin{remark} In a previous version of this paper a stronger form of
  this proposition was stated, where the energy of $\alpha$ on the
  unit ball $B(p,1)$ was bounded below even if only a much smaller
  ball was close to a ball in $\mathbf{C}_\gamma\times
  \mathbf{C}^{n-1}$. It was pointed out to us by Kewei Zhang that the
  proof of that stronger result was incomplete, and instead we obtain
  just the above weaker statement. For this reason we also had to
  modify the proof of Proposition 14 below.
\end{remark}

\begin{prop}\label{prop:densityupper}
  There is a constant $A > 0$, such that 
  if $B(p,1)$ is sufficiently close to either the Euclidean unit ball
  or the unit ball in 
  a cone $\mathbf{C}_\gamma\times\mathbf{C}^{n-1}$ with
  $\gamma\in(\gamma_1,\gamma_2)$, 
  then 
  \[ \label{eq:intBpA} \int_{B(p,\frac{1}{2})} \alpha\wedge\omega^{n-1} < A. \]
\end{prop}
\begin{proof}
  For the case of the Euclidean ball this follows from
  Proposition~\ref{prop:Ricbound}. For the cones, 
  using Gromov compactness, it is enough to get a bound $A$ for a
  fixed cone angle $\gamma$, and this will imply a uniform bound for
  $\gamma\in (\gamma_1,\gamma_2)$, and
  even for $\gamma\in(\gamma_1,1]$.
  
  Suppose that $B(p,1)$ is very close to the unit ball in
  $\mathbf{C}_\gamma\times \mathbf{C}^{n-1}$. 
  After a further scaling by a
  fixed factor, we can assume that we are in the situation described
  before Proposition~\ref{prop:conebound}, and so $\omega$ can be
  thought of as a metric on the Euclidean ball. Because of the
  scaling, this will only bound
  the integral in \eqref{eq:intBpA} on a smaller ball, but we can
  cover $B(p,\frac{1}{2})$ with balls each of which is, up to scaling, very
  close to the unit ball in the cone, or the Euclidean unit
  ball. Adding up the contributions we will obtain
  \eqref{eq:intBpA}. 
  
  Let us write
  \[  G = \frac{\omega^n}{\omega_{Euc}^n}, \]
  so that
  \[\begin{aligned} \mathrm{Ric}(\omega) &= -\ddb \log G, \\
     \mathrm{Ric}(\omega)\wedge \omega_{Euc}^{n-1} &= -\frac{1}{n}
     \Delta\log G\,\omega_{Euc}^n,
\end{aligned}\]
  with $\Delta$ denoting the Euclidean Laplacian. We then have 
  \[ \label{eq:intR} \begin{aligned}
  \int_{B_r} \mathrm{Ric}(\omega)\wedge\omega_{Euc}^{n-1} &=
  -\frac{1}{n}\int_{\partial B_r} \nabla_n 
  \log G\, dS \\
&= - \frac{1}{n}V(\partial B_r) \dashint_{\partial B_r} \nabla_n
  \log G\,dS \\
&= -\frac{1}{n} V(\partial B_r) \frac{d}{dr} \dashint_{\partial B_r}
  \log G\,dS,
\end{aligned} \]
where everything is computed using the metric $\omega_{Euc}$, and
$B_r$ denotes the Euclidean ball of radius $r$. In particular
the average 
$\dashint_{\partial B_r} \log G\,dS$
is decreasing with $r$. 

There is an $\epsilon > 0$ depending on the lower bound on the cone
angle $\gamma$, such that the Euclidean ball $B_\epsilon$ is
contained in the ball of radius $1/20$ in
the conical metric. By volume convergence we have an upper bound
on the volume of this ball with respect to $\omega$. In sum we have
\[ \label{eq:Gaverage}
\dashint_{B_\epsilon} G\,\omega_{Euc}^n < C_1, \]
so there is a point $q\in B_\epsilon$ satisfying $G(q) <C_1$. 
By translating our ball slightly,
we can assume that $q=0$. Define now the function
\[ f(r) = \dashint_{\partial B_r} \log G\,dS. \]
We know that $f(r)$ is decreasing, and $f(0) = \log G(0) < C_1$.

Using that $\omega_{Euc} < C\omega$, we have a lower bound on $G$, and
so, using volume convergence as well, 
\[ \label{eq:logGint}
\int_{B_{3/4}\setminus B_{1/2}} \log G\,\omega^n > -C_2, \]
for some constant $C_2$.  
We have
\[ \begin{aligned}
  -C_2 < \int_{B_{3/4}\setminus B_{1/2}} \log G\,\omega^n &= \int_{1/2}^{3/4} V(\partial
  B_r)\dashint_{\partial B_r} \log G\,dS\,dr \\
  &= \int_{1/2}^{3/4} V(\partial B_r) f(r)\, dr.
  \end{aligned}\]
  In this range of $r$ we have $c_0 < V(\partial B_r) < c_1$ for
  some fixed numbers $c_0,c_1$, and the fact that $f(0) <
  C$ and that $f$ is decreasing implies that
  $f(\frac{1}{2}) < C$. It is then clear that we must have
  some $r\in \left(\frac{1}{2}, \frac{3}{4}\right)$ where we have a
  lower bound for $f'(r)$, and this, together with \eqref{eq:intR}
  implies
  \[ \int_{B(p,\frac{1}{2})} \alpha\wedge \omega_{Euc}^{n-1} < A. \]
  Finally to obtain \eqref{eq:intBpA} with $\omega_{Euc}$ replaced by
  $\omega$ we can use the Chern-Levine-Nirenberg argument as in the
  proof of in \cite[Proposition 15]{CDS13_2}, since we have a bound on
  the K\"ahler potential. 
  
\end{proof}

Suppose now that we have a sequence of balls $B_i$ satisfying our
hypotheses, and $B_i \to Z$ in the Gromov-Hausdorff sense. We can 
follow the arguments in Chen-Donaldson-Sun~\cite{CDS13_2}, Section
2.8, to show that all tangent cones of $Z$ are good. As in
\cite{CDS13_2} it is easier to explan the proof of a slightly
different result. Namely, denote
the singular set of $Z$ by
\[S(Z) = S_2(Z) \cup \mathcal{D},\]
where $S_2(Z)$ denotes the
complex codimension 2 singularities, and $\mathcal{D}$ denotes the
points that have a tangent cone of the form
$\mathbf{C}_\gamma\times\mathbf{C}^{n-1}$.
\begin{prop}
  For any compact set $K\subset Z$, the set $K\cap S(Z)$ has capacity
  zero, i.e. there is a cutoff function as in
  Definition~\ref{defn:goodcone} of good tangent cones. 
\end{prop}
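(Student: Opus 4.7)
The plan is to handle the two pieces of $S(Z) = S_2(Z) \cup \mathcal{D}$ separately, since the maximum $\max(g_1, g_2)$ of two cutoffs, each equal to $1$ on a neighborhood of one piece and with small $L^2$-gradient, is itself a cutoff for the union. For $S_2(Z) \cap K$ the Cheeger--Colding--Tian theorem gives Hausdorff codimension at least $4$, and a routine Vitali covering combined with standard bump cutoffs $\psi(\mathrm{dist}(\cdot, x_j)/r_j)$ yields Lipschitz functions with $\|\nabla g\|_{L^2}$ as small as desired: the volume of each supporting ball is bounded by a constant times $r_j^{2n}$, so $\int |\nabla g|^2 \lesssim \sum r_j^{2n-2}$, and finiteness of the codimension-$4$ content of $S_2(Z)\cap K$ lets one make $\sum r_j^{2n-2}$ arbitrarily small.

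The main work is to cover $\mathcal{D} \cap K$ with a cutoff of small gradient. The plan is to establish a codimension-$2$ Minkowski content bound
\[ \mathrm{V}(N_r(\mathcal{D}\cap K)) \leqslant C r^2 \]
for small $r > 0$, and then use a logarithmic cutoff. Passing to a subsequence, extract a weak limit $\mu$ on $Z$ of the Radon measures $\alpha_i \wedge \omega_i^{n-1}$; Proposition~\ref{prop:densityupper} (applied on a cover of $K$ by balls in the rescaled situation considered there) gives a uniform mass bound $\mu(K') < \infty$ on a slightly enlarged compact set $K' \supset K$. For each $p \in \mathcal{D}$, applying Proposition~\ref{prop:densitylower} after rescaling at $p$ yields the pointwise lower density
\[ \mu(B(p,r)) \geqslant c_0 \, r^{2n-2} \]
for all sufficiently small $r > 0$. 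A Vitali covering argument combining the lower density with the upper bound on $\mu(K')$ then shows that $\mathcal{D} \cap K$ is covered by at most $C r^{2-2n}$ balls of radius $r$, and Bishop--Gromov volume comparison upgrades this to the Minkowski bound above.

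With the Minkowski estimate in hand, fix $0 < r < R$ and use the logarithmic cutoff
\[ g(x) = \chi\!\left(\frac{\log(R/d(x))}{\log(R/r)}\right), \qquad d(x) = \mathrm{dist}(x, \mathcal{D}\cap K), \]
where $\chi : \mathbf{R} \to [0,1]$ is smooth, equal to $1$ for $s \geqslant 1$ and $0$ for $s \leqslant 0$. Then $|\nabla g| \leqslant C/(d \log(R/r))$ on the annular region $\{r < d < R\}$, and decomposing this region dyadically --- each dyadic shell $N_{2s} \setminus N_s$ having volume $\lesssim s^2$ by the Minkowski bound --- yields
\[ \int |\nabla g|^2 \leqslant \frac{C}{\log(R/r)}, \]
which is less than $\eta$ once $r$ is small enough. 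Since $g$ is supported in the $R$-neighborhood of $\mathcal{D} \cap K$, shrinking $R$ if necessary and taking the maximum with the $S_2(Z)\cap K$ cutoff completes the construction.

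The main obstacle I anticipate is the rigorous passage to the limit in the density bound, i.e.\ deducing $\mu(B(p,r)) \geqslant c_0 r^{2n-2}$ for the limit measure from Proposition~\ref{prop:densitylower}, which is a statement about the rescaled sequence $\omega_i$. This rests on weak convergence of $\alpha_i \wedge \omega_i^{n-1}$ to $\mu$ even near the conical tangent cone, which follows from the $C^{1,\alpha}$ convergence of Proposition~\ref{prop:convergetoB} on the regular set together with the description of neighborhoods of conical tangent cones recalled before Proposition~\ref{prop:conebound}, but it must be checked that no mass is lost at the singularity in the limit so that the pointwise lower bound survives.
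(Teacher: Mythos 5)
The decomposition of $S(Z) = S_2(Z)\cup\mathcal{D}$, the treatment of the codimension-$4$ stratum by a standard Vitali covering, and the logarithmic cutoff for the codimension-$2$ locus are all in line with the paper's route (which defers to Propositions 18--20 of \cite{CDS13_2}). The step that does not go through as stated is the extraction of a finite weak limit measure $\mu$ on a compact set $K'$ containing $K$. Proposition~\ref{prop:densityupper} controls $\int_{\widetilde{B}_i(z,\rho)}\alpha_i\wedge\omega_i^{n-1}$ only for balls close to the Euclidean or conical models, i.e.\ for $z\in Z\setminus S_2(Z)$. It gives no control near $S_2(Z)$, and in fact the total masses $\int_{B_i}\alpha_i\wedge\omega_i^{n-1}$ need not be bounded when the $B_i$ come from rescalings $\omega_i=r_i^{-2}\omega_{t_i}$ with $r_i\to 0$ (as in iterated tangent cones): one only gets $\int_{B_i}\alpha_i\wedge\omega_i^{n-1}=r_i^{2-2n}\int_{B(p_i,r_i)}\alpha\wedge\omega_{t_i}^{n-1}$, which can diverge when the unscaled density is not uniformly bounded --- precisely what happens near $S_2$. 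So the claim $\mu(K')<\infty$ is unjustified, and without it the Vitali argument does not deliver the covering of $\mathcal{D}\cap K$ near points where $\mathcal{D}$ accumulates on $S_2(Z)$.

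This is why the paper (following \cite[Proposition 20]{CDS13_2}) never passes to a limiting measure: it keeps the $i$-dependent quantities $V(i,z,\rho)$, uses Proposition~\ref{prop:densityupper} to bound $V(i,z,\rho)$ only at points $z\in Z\setminus S_2(Z)$, and runs the disjoint-ball counting locally, away from $S_2(Z)$, with the codimension-$4$ locus handled separately and patched in at the end. Your construction can be repaired in the same spirit: first build the cutoff $g_1$ equal to $1$ on a neighborhood $U_1$ of $S_2(Z)\cap K$ with small $\|\nabla g_1\|_{L^2}$, restrict the weak-limit/Vitali/Minkowski argument to a compact $K''\subset Z\setminus S_2(Z)$ with $(\mathcal D\cap K)\setminus U_1\subset K''$ (where the mass bound from Proposition~\ref{prop:densityupper} really is available), obtain $g_2$ for $\mathcal{D}\cap K''$, and take $\max(g_1,g_2)$. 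As written, this localization is missing.

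Finally, the concern you flag at the end --- possible mass loss at $\mathcal{D}$ when passing from $\alpha_i\wedge\omega_i^{n-1}$ to $\mu$ --- is not actually the problem. Once the weak limit exists (on a compact away from $S_2$), the lower density bound on $\mu$ at $p\in\mathcal{D}$ follows directly from the portmanteau inequality $\limsup_i\mu_i(F)\leqslant\mu(F)$ for closed $F$, combined with $\widetilde{B}_i(p,r)\subset \overline{B}(p,2r)$ for large $i$ and Proposition~\ref{prop:densitylower}. The direction of the inequality is exactly the one that does not require ruling out mass escape; the genuine gap is the absence of the mass bound near $S_2(Z)$.
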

\begin{proof}
  The proof is very similar to the one in \cite{CDS13_2}, except that
  our Proposition~\ref{prop:densitylower} is weaker than the
  corresponding Proposition 17 in \cite{CDS13_2}. 

Define, for $z\in Z$ and $0 < \rho < 1$
  \[ V(i,z,\rho) = \rho^{2-2n}\int_{\widetilde{B}_i(z,\rho)} \alpha_i
  \wedge\omega_i^{n-1}, \]
  where 
  \[ \widetilde{B}_i(z,\rho) = \{x\in B_i\,:\, d_i(x,z) < \rho\}, \]
  and as usual we fix a distance function $d_i$ on $Z\sqcup B_i$ realizing
  the Gromov-Hausdorff convergence. 
  By the Gromov-Hausdorff convergence, for each $\rho > 0$,
  the ``ball'' $\widetilde{B}_i(z,\rho)$ is comparable to a ball of
  radius $\rho$ in $B_i$ for sufficiently large $i$ (i.e. it is
  contained between balls of radius $\rho/2$ and $2\rho$).

If $x\in Z\setminus S_2(Z)$, then by
Proposition~\ref{prop:densityupper} there exists a $\rho_x > 0$ such
that $V(i,x,\rho_x) < A$ for all large $i$. In addition if $x\in
\mathcal{D}$, then using Proposition~\ref{prop:densitylower}, 
for all $\delta > 0$ there exists an $r_x < \delta$
such that $V(i,x, r_x) > c_0$ for all large $i$. Note that in contrast
with the situation in \cite{CDS13_2}, we might
not have this inequality for all sufficiently small $r_x$, 
but rather for each $x$ there
is a certain sequence of radii going to zero for which we have the
inequality. 

Let $K\subset Z$ be compact. By Cheeger-Colding's Hausdorff dimension estimate of
$S_2$, for any small $\epsilon > 0$
 we can cover $S_2\cap K$ with balls $B_\mu$ such that
\[ \sum_\mu r_\mu^{2n-3} < \epsilon. \]
The set 
\[ J = K \setminus \cup_\mu B_\mu \]
is compact and covered by the balls $B_{\rho_x}(x)$. We choose a
finite subcover corresponding to $x_1,\ldots, x_N$, and set
\[ W = \bigcup_{j=1}^N B_{\rho_{x_j}}(x_i). \]
For sufficiently large $i$ we then get an estimate 
\begin{equation}\label{eq:1}
\int_W \alpha_i \wedge \omega_i^{n-1} < C 
\end{equation}
for some uniform $C$ (depending on $\epsilon$ above). Note also that
$J\subset K\setminus S_2$. 

We claim that the compact set $\mathcal{D} \cap J$ has finite $(2n-2)$-dimensional
Hausdorff measure. To prove this, recall that for any small $\delta >
0$ and all $x\in
\mathcal{D}\cap J$ we have  $r_x < \delta$ such that $V(i,x,r_x) >
c_0$ for large $i$. By a Vitali type covering argument we can find a
disjoint sequence of balls $B_{r_{x_k}}(x_k)$ in $W$ such that
$B_{5r_{x_k}}(x_k)$ cover all of $\mathcal{D}\cap J$. It follows that
\[ \mathcal{H}^{2n-2}_\delta (\mathcal{D}\cap J) \leq \sum_k 5^{2n-2}
r_{x_k}^{2n-2}. \]
At the same time for each $x_k$, we have the estimate
\[ r_{x_k}^{2-2n} \int_{B_{r_{x_k}}(x_k)} \alpha_i \wedge
\omega_i^{n-1} > c_0, \]
for sufficiently large $i$, and so using \eqref{eq:1} we have
\[ \sum_{k=1}^M c_0 r_{x_k}^{2n-2} < C \]
for any $M$, for a constant $C$ independent of $M$. It follows that
\[ \sum_{k=1}^\infty c_0 r_{x_k}^{2n-2} \leq C, \]
and since $\delta$ was arbitrary (and $C$ is independent of $\delta$), 
this implies that  $\mathcal{H}^{2n-2}(\mathcal{D}\cap J) \leq C$. 

It follows that $\mathcal{D}\cap J$ has capacity zero, in the
sense that for any $\kappa > 0$ we can find a cutoff function
$\eta_1$ supported in the $\kappa$-neighborhood of $\mathcal{D}\cap
J$, such that $\Vert \nabla \eta_1\Vert_{L^2} \leq \kappa$, and
$\eta_1 = 1$ on a neighborhood $V$ of $\mathcal{D}\cap J$ (see for
instance \cite[Lemma 2.2]{Bou} or \cite[Theorem 3, p. 154]{EG92}). The set
$(K\cap S(Z)) \setminus V$ is compact, and so it is covered by
finitely many of our balls $B_\mu$ from before. Because of this, as in
\cite{DS12}, 
we can find a good cutoff function 
$\eta_2$, with $\Vert \nabla \eta_2\Vert_{L^2} \leq \kappa$ (if
$\epsilon$ at the beginning was sufficiently small) and with
$\eta_2 = 1$ on a neighborhood of $K\cap S(Z)\setminus V$. Then
$\eta_1 + \eta_2 \geq 1$ on $S(Z)\cap K$, and so $\eta_1 + \eta_2$ can
be truncated to give the required cutoff function. 
\end{proof}

As explained in \cite{CDS13_2} a very similar proof shows that all
tangent cones in a limit space $Z$ of a sequence of balls $B(p_i,
1)$ are good, proving Theorem~\ref{thm:goodcones} in the case $ T< 1$. 

\section{The case $T=1$}\label{sec:Tis1}
When $T=1$, then 
we need to study non-collapsed balls $B = B(p,1)$ with metrics
satisfying
\[\label{eq:cont2}
\mathrm{Ric}(\omega) = \lambda\omega + (1-t)\alpha, \]
where $t < 1$ and $\lambda\in [0,1]$. These will be small balls in
$(M,\omega_t)$ along the continuity method, scaled up to unit size. 
 We can still assume that $\alpha$ satisfies the bounds
\eqref{eq:alphabounds} in suitable holomorphic coordinates, on any
ball of radius $K^{-1}$ (measured
using the metric $\alpha$). The issue that arises when $T=1$
is that $(1-t)\alpha$ no longer satisfies such bounds as $t\to 1$. 

We follow the arguments in Chen-Donaldson-Sun~\cite{CDS13_3} and we
will point out the analogies with their results. One of the main
difficulties when $T=1$ is that we cannot control the integral of
$\alpha$ on $B(p,1)$ even  
when $B(p,1)$ is Gromov-Hausdorff close to the Euclidean ball (note
that $B(p,1)$ is a possibly very small ball in the original metric
along the continuity method scaled to unit size). When $T < 1$, we
could achieve this in Proposition~\ref{prop:Ricbound} using
essentially that $\alpha$ controlled the Ricci curvature from above
and
 below in that case. 

A crucial tool in \cite{CDS13_3} is their Proposition 1, which
does not make use of the conical singularity, and
applies just as well in our situation. First we recall some
definitions. For a subset $A$ in a $2n$-dimensional length space $P$,
and for $\eta < 1$, let $m(\eta, A)$ be the infimum of those $M$ for which
$A$ can be covered by $Mr^{2-2n}$ balls of radius $r$ for all $\eta
\leqslant r < 1$.

For $x\in B$ and $r,\delta > 0$ a holomorphic map $\Gamma : B(x,r) \to
\mathbf{C}^n$ is called an $(r,\delta)$-chart centered at $x$ if
\begin{itemize}
\item $\Gamma(x) =0$,
\item $\Gamma$ is a homeomorphism onto its image,
\item For all $x', x''\in B(x,r)$ we have
  $|d(x',x'') - d(\Gamma(x'), \Gamma(x''))| \leqslant \delta,$
\item For some fixed $p > 2n$, we have
  $ \Vert \Gamma_*(\omega) - \omega_{Euc}\Vert_{L^p} \leqslant
  \delta.$
\end{itemize}

With these definitions, \cite[Proposition 1]{CDS13_3} is the
following. 
\begin{prop}\label{prop:CDS}
  Given $M,c$ there are $\rho(M), \eta(M,c), \delta(M, c) > 0$ with
  the following effect. Suppose that $1-I(B) < \delta$ and $W\subset
  B$ is a subset with $m(\eta,W) < M$, such that for any $x\in
  B\setminus W$ there is a $(c\eta, \delta)$-chart centered at
  $x$. Then (if the constant $K$ in the properties of $\alpha$
  is large enough):
  \begin{enumerate}
  \item
    There is a holomorphic map $F:B(p,\rho)\to \mathbf{C}^n$ which is
    a homeomorphism to its image, $|\nabla F| < K$, and its image lies
    between $0.9\rho B^{2n}$ and $1.1\rho B^{2n}$.
  \item
    There is a local K\"ahler potential $\phi$ for $\omega$ on
    $B(p,\rho)$ with $|\phi|\rho^{-2} < K$.
  \end{enumerate}
\end{prop}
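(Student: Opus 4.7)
The plan is to transplant the strategy of Chen--Donaldson--Sun \cite{CDS13_3}, which rests on H\"ormander's $L^2$-technique and only uses the ingredients listed in the hypotheses. The global chart $F$ is produced in three stages: first, patch the local $(c\eta,\delta)$-charts into a smooth map $\widetilde F$ on $B(p,\rho)$ whose $\bar\partial$ is concentrated in a small neighborhood of $W$; second, correct $\widetilde F$ by a H\"ormander solution $v$ of $\bar\partial v=-\bar\partial\widetilde F$ to obtain a holomorphic $F=\widetilde F+v$; third, pull back a Euclidean potential through $F$ to obtain $\phi$.

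For the patching step, I would choose a maximal $(c\eta/2)$-separated net $\{x_\alpha\}\subset B\setminus W_{\eta}$, where $W_\eta$ is the $\eta$-neighborhood of $W$, and take the given $(c\eta,\delta)$-chart $\Gamma_\alpha$ at each $x_\alpha$. On overlaps, $\Gamma_\alpha\circ\Gamma_{\alpha'}^{-1}$ is holomorphic and $O(\delta)$-close to an isometry of $\mathbf{C}^n$, so after post-composing each $\Gamma_\alpha$ with a suitable unitary affine map the charts live in a common ambient $\mathbf{C}^n$ and agree to within $O(\delta)$. A partition of unity subordinate to the resulting $(c\eta)$-cover, multiplied by a smooth cutoff $\chi$ equal to $1$ outside $W_{2\eta}$ and vanishing on $W_\eta$, glues these into $\widetilde F:B(p,\rho)\to\mathbf{C}^n$ which is $C\delta$-close to the reference chart $\Gamma_0$. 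Since each $\Gamma_\alpha$ is holomorphic, $\bar\partial\widetilde F$ is supported in the gluing region; the bound $|\nabla\chi|=O(\eta^{-1})$ together with the fact that this region has $\omega$-volume $\lesssim M\eta^{2}$ (by the definition of $m(\eta,W)$ and Bishop--Gromov) yields
\[
\Vert\bar\partial\widetilde F\Vert_{L^p(B(p,\rho))}\leqslant C(M,\eta)\,\delta.
\]

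For the correction step, the hypothesis $1-I(B)<\delta$ combined with Colding's volume convergence makes $B$ Gromov--Hausdorff close to the Euclidean unit ball, which together with $\mathrm{Ric}(\omega)\geqslant 0$ gives a uniform Poincar\'e inequality and a weighted H\"ormander $L^2$-estimate on $B(p,2\rho)$; the assumption that $K$ is sufficiently large absorbs the contribution of $(1-t)\alpha$ to the Ricci form. I would solve $\bar\partial v=-\bar\partial\widetilde F$ with
\[
\Vert v\Vert_{L^p(B(p,\rho))}\leqslant C\,\Vert\bar\partial\widetilde F\Vert_{L^p},
\]
and, since $p>2n$, Sobolev embedding promotes this to a $C^0$-bound. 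Choosing first $\eta$ and then $\delta$ sufficiently small, $F=\widetilde F+v$ is holomorphic and $C^0$-close to the identity on the reference chart, hence a homeomorphism onto a domain trapped between $0.9\rho B^{2n}$ and $1.1\rho B^{2n}$ by a winding-number/open-mapping argument; the Cauchy estimate on the holomorphic $F$ then gives $|\nabla F|<K$. For the potential, on the star-shaped image $F(B(p,\rho))\subset\mathbf{C}^n$ the pushforward $F_*\omega$ is a closed positive $(1,1)$-form which is $L^p$-close to $\omega_{Euc}$; the $\partial\bar\partial$-Poincar\'e lemma produces $\psi$ with $F_*\omega=\ddb\psi$, normalized to mean zero on the image, and standard elliptic estimates yield $|\psi|<C\rho^{2}$, so $\phi=\psi\circ F$ is the required local potential.

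The main obstacle is the tension in the correction step: one wants $\eta$ very small so that $W_\eta$ has small volume, but the cutoff derivatives $O(\eta^{-1})$ then threaten to overwhelm the H\"ormander estimate. What saves the argument is precisely the covering hypothesis $m(\eta,W)<M$, which forces the $\omega$-volume of the gluing region to scale like $M\eta^{2}$ regardless of how small $\eta$ is, so that $\Vert\bar\partial\widetilde F\Vert_{L^p}$ depends only on $M$, $\eta$, and $\delta$, and can be made arbitrarily small by choosing $\delta$ last. This is the source of the ordered dependence $\rho(M)$, then $\eta(M,c)$, then $\delta(M,c)$ in the statement.
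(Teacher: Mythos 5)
The paper does not prove this result: it is a verbatim restatement of \cite[Proposition 1]{CDS13_3}, and the only justification the paper offers is the remark that the argument there makes no use of the conical singularity and therefore applies unchanged. Your sketch correctly recognizes the gluing-plus-H\"ormander architecture of Chen--Donaldson--Sun's proof, so the skeleton is the right one, but two of the steps you pass over quickly are exactly where most of the work in \cite{CDS13_3} sits.

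The first is the clause ``after post-composing each $\Gamma_\alpha$ with a suitable unitary affine map the charts live in a common ambient $\mathbf{C}^n$.'' The transitions $\Gamma_\alpha\circ\Gamma_{\alpha'}^{-1}$ are only $O(\delta)$-close to rigid motions on overlaps, and making the whole collection of charts simultaneously compatible requires composing chains of length $O(\eta^{-1})$ along paths across $B(p,\rho)$, controlling the accumulated error, and verifying that the resulting development is path-independent. This quantitative alignment of nearly affine charts is not a consequence of choosing a partition of unity --- it is a separate argument of its own and carries a substantial share of the proof. The second gap is the H\"ormander step: to solve the scalar $\bar\partial$-equation on a ball with a useful estimate one needs a strictly plurisubharmonic weight, and the Poincar\'e inequality you extract from $1-I(B)<\delta$ and Colding's volume convergence does not by itself supply one. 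In Chen--Donaldson--Sun the weight is furnished by a local K\"ahler potential satisfying precisely the bound in conclusion (2), so the potential is an ingredient of the $\bar\partial$-estimate and not merely an output; this is what ties the two conclusions together and forces a more delicate organization of the proof than the single linear pass you describe. As written, the sketch cannot be closed without supplying both of these missing pieces.
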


The results which we have to modify in \cite{CDS13_3} are their
Corollary 2, and Propositions 5, 6.
For any $B(q,r)\subset B$, let us define the ``volume density''
\[ V(q,r) = r^{2-2n}\int_{B(q,r)} \alpha\wedge \omega^{n-1}. \]
The following is the analog of \cite[Corollary 2]{CDS13_3}. 
\begin{prop}\label{prop:CDScor2}
  Given $M$, suppose that the ball $B$ satisfies the hypotheses of
  Proposition~\ref{prop:CDS} for some $c > 0$. There are
  $A, \kappa > 0$, depending on $M$, such that if 
\[ \label{eq:intkappa}
  \int_B \alpha \wedge \omega^{n-1} < \kappa, \]
  then $\alpha < A\kappa \omega$ on $\frac{1}{3}\rho B$, 
  where $\rho=\rho(M)$ from Proposition~\ref{prop:CDS}. 
\end{prop}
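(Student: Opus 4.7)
The plan is to transport the problem to Euclidean space via the holomorphic chart produced by Proposition~\ref{prop:CDS}, apply the Euclidean $\epsilon$-regularity result of Proposition~\ref{prop:epsilonreg} to $\alpha$, and translate the resulting pointwise bound back to the original ball.

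First I would apply Proposition~\ref{prop:CDS} to obtain the holomorphic homeomorphism $F: B(p,\rho)\to \mathbf{C}^n$ with $|\nabla F|<K$ and image sandwiched between $0.9\rho B^{2n}$ and $1.1\rho B^{2n}$, together with a local K\"ahler potential $\phi$ for $\omega$ satisfying $|\phi|\rho^{-2}<K$. Pushing forward by $F$ and identifying $B(p,\rho)$ with its image, we may regard $\omega=\ddb\phi$ as a K\"ahler form on a neighborhood of $\tfrac{3}{4}\rho B^{2n}$ in $\mathbf{C}^n$, with a uniformly bounded potential. The bound $|\nabla F|<K$ translates, on the image, into $\omega_{Euc}\leqslant K^2\omega$; this is the inequality which will allow us to pass at the end from a bound of the form $\alpha<A'\kappa\,\omega_{Euc}$ to the desired $\alpha<A\kappa\,\omega$.

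The main technical step is to convert the hypothesis \eqref{eq:intkappa} into an analogous Euclidean energy bound
\[
\int_{\tfrac{1}{2}\rho B^{2n}}\alpha\wedge\omega_{Euc}^{n-1}<C\kappa,
\]
on a slightly smaller Euclidean ball inside the chart. I would carry this out by the Chern-Levine-Nirenberg argument used in the proof of \cite[Proposition 15]{CDS13_2}, which applies here for exactly the same reason: writing $\omega_{Euc}=\ddb(|z|^2/4)$ and $\omega=\ddb\phi$, and expanding $\omega_{Euc}^{n-1}$ using integration by parts against the closed positive $(n-1,n-1)$-currents $\alpha\wedge\omega^j\wedge\omega_{Euc}^{n-2-j}$, one controls each term by the $L^\infty$ bounds on $\phi$ and $|z|^2$ together with a cutoff whose gradient is of order $\rho^{-1}$. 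The positivity of $\alpha$ (with bounded curvature from \eqref{eq:alphabounds}, hence in particular giving a closed positive current) is what makes the CLN estimates legitimate. This is the step I expect to be the main obstacle, because of the bookkeeping required to bound all the mixed terms by $C\kappa$ rather than just by a constant; the key point is that each mixed term ultimately reduces, after sufficient integration by parts, to an integral of the form $\int\alpha\wedge\omega^{n-1}$ multiplied by a factor controlled by $\sup|\phi|$ and the cutoff.

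Once the Euclidean energy bound is in hand, for $\kappa$ small enough (relative to the constant $\epsilon_0$ of Proposition~\ref{prop:epsilonreg}) and on any Euclidean ball of radius a small fixed multiple of $\rho$ centered in $\tfrac{1}{3}\rho B^{2n}$, the hypothesis \eqref{eq:smallenergy} is verified (using the $\alpha$-curvature bound coming from \eqref{eq:alphabounds}). Proposition~\ref{prop:epsilonreg} then yields $\mathrm{tr}_{\omega_{Euc}}\alpha<A'\kappa$ on $\tfrac{1}{3}\rho B^{2n}$, that is $\alpha<A'\kappa\,\omega_{Euc}$. Combining with $\omega_{Euc}\leqslant K^2\omega$ gives $\alpha<A\kappa\,\omega$ on $\tfrac{1}{3}\rho B$ with $A=A'K^2$, completing the proof.
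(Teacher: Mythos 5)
Your overall plan — push forward by the holomorphic chart from Proposition~\ref{prop:CDS}, establish a Euclidean energy bound for $\alpha$, apply the $\epsilon$-regularity of Proposition~\ref{prop:epsilonreg}, and translate back using $\omega_{Euc}\lesssim\omega$ — is exactly the paper's approach, and it works. However, the step you flag as the ``main obstacle,'' converting the hypothesis $\int_B\alpha\wedge\omega^{n-1}<\kappa$ into $\int\alpha\wedge\omega_{Euc}^{n-1}<C\kappa$, is in fact the easy step, and the Chern--Levine--Nirenberg machinery you invoke is unnecessary. Since $\omega_{Euc}<K\omega$ pointwise in the chart, one may write $\omega=K^{-1}\omega_{Euc}+\eta$ with $\eta\geqslant 0$, expand $\omega^{n-1}$ binomially, and wedge with the positive $(1,1)$-form $\alpha$: every term is a nonnegative volume form, so $\alpha\wedge\omega_{Euc}^{n-1}\leqslant K^{n-1}\alpha\wedge\omega^{n-1}$ holds pointwise as volume forms. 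Integrating gives the Euclidean energy bound immediately, with a constant depending only on $K$ and $\rho$. The CLN argument is what you need for the \emph{opposite} direction — bounding $\int\alpha\wedge\omega^{n-1}$ by $\int\alpha\wedge\omega_{Euc}^{n-1}$, where the K\"ahler potential bound must substitute for a pointwise upper bound on $\omega$, as is done in the proof of Proposition~\ref{prop:densityupper}. Here you already have the pointwise lower bound $\omega>K^{-1}\omega_{Euc}$, which is all you need; reaching for CLN both overcomplicates the proof and misidentifies where the genuine analytic difficulty lies in this circle of arguments.
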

\begin{proof}
  From Proposition~\ref{prop:CDS} we know that we can think of the
  metric $\omega$ as a metric on the Euclidean ball $0.9\rho B^{2n}$,
  where we have $\omega_{Euc} < K\omega$. We also think of $\alpha$ as
  being defined on this ball, and then \eqref{eq:intkappa} implies that
\[ (0.9\rho)^{2-2n}\int_{0.9\rho B^{2n}} \alpha \wedge \omega_{Euc}^{n-1} <
C_1\kappa, \]
  for some $C_1$ (which depends on $\rho$, and thus on $M$). 
  Our bounds on $\alpha$ imply that its curvature is bounded, so the
  $\epsilon$-regularity, Proposition~\ref{prop:epsilonreg}, implies
  that once $\kappa$ is sufficiently small, we have
\[ \alpha < C_2 \kappa\, \omega_{Euc} < KC_2\kappa\,\omega 
\, \text{ on } 0.45\rho
B^{2n}, \]
  for some $C_2$ (depending on $\rho$). 
\end{proof}

Our next goal is to obtain a weak form of monotonicity of the
volume density (note that $V(q,r)$ is 
monotone in $r$ if $\omega$ is the Euclidean metric),
which is analogous to \cite[Proposition 5]{CDS13_3}. For this we
first need
the following variant of the $\epsilon$-regularity result,
Proposition~\ref{prop:epsilonreg}, which does not use monotonicity. 
\begin{prop}\label{prop:epsilonreg2}
  There are $\delta, \epsilon > 0$ with the following
  properties. Suppose that $1 - I(B) \leqslant \delta$, and 
 \[ \label{eq:ineqepsilon}
  \sup_{B(q,r)\subset B} V(q,r) < \epsilon \]
  Then $\alpha \leqslant 4\omega$ on
  $\frac{1}{2}B$. 
\end{prop}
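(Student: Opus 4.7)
The plan is to adapt the rescaling argument of Proposition~\ref{prop:Ricbound}: we use the same contradiction setup to produce a rescaled ball on which $\alpha\leq\tilde\omega$, but in place of the Bishop--Gromov contradiction (which needed a usable Ricci lower bound from $\alpha$, no longer available when $(1-t)\to 0$) we derive the contradiction from the hypothesis $V<\epsilon$ via the $\epsilon$-regularity statement of Proposition~\ref{prop:epsilonreg}. I argue by contradiction. Let $M=\sup_{B}d_x^2|\alpha(x)|_\omega$, attained at some $q\in B$, and suppose $M>1$. Setting $\tilde\omega=4Md_q^{-2}\omega$ and $\tilde B=B_{\tilde\omega}(q,1)\subset B$, the same estimate as in Proposition~\ref{prop:Ricbound} yields $\alpha\leq\tilde\omega$ on $\tilde B$ and $|\alpha(q)|_{\tilde\omega}=\tfrac14$. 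Both $I(\cdot)$ and $V(\cdot,\cdot)$ are scale invariant, so the hypotheses $1-I(\tilde B)<\delta$ and $V(\cdot,\cdot)<\epsilon$ on sub-balls of $\tilde B$ persist.

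Using the rescaled Aubin equation $\mathrm{Ric}(\tilde\omega)=c^{-1}\lambda\tilde\omega+(1-t)\alpha$ with $c:=4Md_q^{-2}\geq 4$ (hence $c^{-1}\lambda\leq\tfrac14$) together with $\alpha\leq\tilde\omega$, I obtain the two-sided Ricci bound $0\leq\mathrm{Ric}(\tilde\omega)\leq\tfrac54\tilde\omega$ on $\tilde B$. By Colding's volume convergence, $1-I(\tilde B)<\delta$ forces $d_{GH}(\tilde B,B^{2n})$ to be small, so the K\"ahler version of Anderson's theorem (Proposition~\ref{prop:Anderson} and the discussion following it) produces holomorphic coordinates $\{z^i\}$ on some $B_{\tilde\omega}(q,\theta)$ in which the components of $\tilde\omega$ lie in $[\tfrac12,2]\delta_{j\bar k}$ and are $C^{1,\alpha}$-controlled. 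In these coordinates the equivalence $\omega_{Euc}\sim\tilde\omega$ yields $|\alpha(q)|_{\omega_{Euc}}\geq c_n>0$ for a universal $c_n$. Using $\omega_{Euc}\leq 2\tilde\omega$ we have $B^{Euc}(q,r)\subset B_{\tilde\omega}(q,r\sqrt 2)$ and $\omega_{Euc}^{n-1}\leq 2^{n-1}\tilde\omega^{n-1}$, so the hypothesis $V(q,r\sqrt 2)<\epsilon$ translates, for $r\leq\theta/\sqrt 2$, into
\[ r^{2-2n}\int_{B^{Euc}(q,r)}\alpha\wedge\omega_{Euc}^{n-1}<C\epsilon. \]

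To close the contradiction I apply Proposition~\ref{prop:epsilonreg} to $\alpha$ on this Euclidean ball. The curvature of $\alpha$ is bounded in its own holomorphic chart $\{w^i\}$ by~\eqref{eq:alphabounds}, valid on $B^\alpha(q,K^{-1})$, which contains $B_{\tilde\omega}(q,K^{-1})$ since $\alpha\leq\tilde\omega$ gives $d_\alpha\leq d_{\tilde\omega}$; on the overlap with the $z$-chart, the biholomorphism $w\leftrightarrow z$ has uniformly bounded derivatives (both charts render the ambient metrics comparable to Euclidean), so $\alpha$ has bounded curvature in the $z$-chart as well. Proposition~\ref{prop:epsilonreg} then gives $\mathrm{tr}_{\omega_{Euc}}\alpha(q)<C r^{-2}\epsilon$ at the fixed scale $r=\min(\theta,K^{-1})/2$; for $\epsilon$ sufficiently small this is strictly less than $c_n$, contradicting $\mathrm{tr}_{\omega_{Euc}}\alpha(q)\geq|\alpha(q)|_{\omega_{Euc}}\geq c_n$. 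The main technical obstacle is the curvature-transfer step between the two holomorphic charts, and the only subtlety in the choice of constants is the natural ordering $\theta,K^{-1}$ fixed first, then $\delta$ small enough for Anderson's theorem, and finally $\epsilon$ small enough to trigger the $\epsilon$-regularity step.
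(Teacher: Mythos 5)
Your proof is correct, but it closes the contradiction by a genuinely different (and heavier) route than the paper. Both arguments set up the same rescaled ball $\widetilde B$ with $\alpha\leqslant\widetilde\omega$ and the pointwise lower bound $|\alpha(q)|_{\widetilde\omega}=\tfrac14$, deduce a two-sided Ricci bound, and invoke Anderson to get good holomorphic coordinates. The paper then simply notes that (using the $C^{1,\alpha}$ control of $\widetilde\omega$ and the $C^2$ control of $\alpha$ transferred between the two charts, exactly as in Proposition~\ref{prop:Ricbound}) the pointwise bound $\alpha\wedge\widetilde\omega^{n-1}\geqslant\frac{1}{10n}\widetilde\omega^n$ persists on a ball $\rho\widetilde B$ of definite size; together with non-collapsing and scale invariance of $V$ this lower-bounds $V(q,\cdot)$, contradicting \eqref{eq:ineqepsilon} directly. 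You instead feed \eqref{eq:ineqepsilon} through the $\epsilon$-regularity Proposition~\ref{prop:epsilonreg} to get a pointwise \emph{upper} bound $\mathrm{tr}_{\omega_{Euc}}\alpha(q)\lesssim\epsilon/r^2$, contradicting the pointwise lower bound at $q$. Your chart-transfer step for the curvature hypothesis of Proposition~\ref{prop:epsilonreg} is correct (curvature is chart-independent and is controlled by the $C^2$ bounds \eqref{eq:alphabounds}), and your ordering of the constants $\theta,K^{-1},\delta,\epsilon$ is fine. But the paper's route is the simpler one: it obtains the contradiction by integrating a pointwise \emph{lower} bound and never needs $\epsilon$-regularity at all. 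One should also note that the paper's proof of Proposition~\ref{prop:epsilonreg} lists ``the argument in Proposition~\ref{prop:epsilonreg2}'' as one possible derivation, so to avoid an apparent circularity your argument must rely on the independent Schoen--Uhlenbeck proof of Proposition~\ref{prop:epsilonreg}, which is indeed available.
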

\begin{proof}
  The proof is similar to the proof of
  Proposition~\ref{prop:Ricbound}. Suppose that 
  \[ \sup_B d_x^2 |\alpha|(x) = M, \]
  where $d_x$ is the distance to the boundary of $B$, and suppose that
  the supremum is achieved at $q\in B$. If $M > 1$, let
  $\widetilde{B}$ with metric $\widetilde{\omega}$ be the ball
  $ B(q, 0.5d_q M^{-1/2})$ scaled to unit size.  On $\widetilde{B}$ we
  have $\alpha \leqslant \widetilde{\omega}$, and at the same time, at
  the origin we have  
  \[ \alpha \wedge \widetilde{\omega}^{n-1}(0) \geqslant
  \frac{1}{4n}\widetilde{\omega}^n(0).\]
  In particular on $\widetilde{B}$ we have a two-sided Ricci bound, so
  Anderson's result gives good holomorphic coordinates on $\theta
  \widetilde{B}$ once $\delta$ is chosen small enough. By the same
  argument as in the proof of Proposition~\ref{prop:Ricbound} we find
  a ball $\rho \widetilde{B}$ of a definite size, on which 
  \[  \alpha \wedge \widetilde{\omega}^{n-1}\geqslant
  \frac{1}{10n}\widetilde{\omega}^n,\]
  and so
  \[ \rho^{2-2n}\int_{\rho\widetilde{B}} \alpha\wedge
  \widetilde{\omega}^{n-1} \geqslant c_1, \]
  where we also used the non-collapsing assumption (and $c_1$ depends
  on $\rho$, but $\rho$ is a fixed number). If $\epsilon$ is chosen
  sufficiently small, then this contradicts our assumption
  \eqref{eq:ineqepsilon}. We then must have $M\leqslant 1$ and so
  $\alpha\leqslant 4\omega$ on $\frac{1}{2}B$. 
\end{proof}

\begin{prop}\label{prop:CDSprop5}
Given the $\epsilon > 0$ from Proposition~\ref{prop:epsilonreg2}, 
we have $\delta, \kappa > 0$ with the following properties. Suppose that
$1-I(B)\leqslant \delta$. If $B(q,r) \subset B(p,1/2)$ and $V(q,r)
\geqslant \epsilon$, then $V(q,R) \geqslant \kappa$ whenever $ R > r$
and $B(q,R)\subset B(p,1/2)$. 
\end{prop}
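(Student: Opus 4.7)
The plan is to argue by contradiction, using a two-stage rescaling together with Proposition~\ref{prop:epsilonreg2} and Euclidean monotonicity in the limit. Suppose the conclusion fails: then we have a sequence $B_i = B(p_i, 1)$ satisfying the standing hypotheses with $1 - I(B_i) \to 0$, and $q_i$, $r_i < R_i$ with $B(q_i, R_i) \subset B(p_i, 1/2)$, $V(q_i, r_i) \geq \epsilon$, and $\kappa_i := V(q_i, R_i) \to 0$. Rescaling each metric by $R_i^{-2}$ gives $\widetilde{\omega}_i = R_i^{-2}\omega_i$ in which $\widetilde{V}(q_i, 1) = \kappa_i \to 0$ while $\widetilde{V}(q_i, \tilde{r}_i) \geq \epsilon$ at the interior scale $\tilde{r}_i = r_i/R_i$; comparing integrals on the two nested balls forces $\tilde{r}_i \to 0$. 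The monotonicity $I(\widetilde{B}_i) \geq I(B_i)$ under restriction to sub-balls transfers $1 - I(\widetilde{B}_i) \to 0$.

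Next I would blow up at a critical scale. Let $\sigma_i \in [\tilde{r}_i, 1)$ be the supremum of $\sigma$ with $\widetilde{V}(q_i, \sigma) \geq \epsilon$; bounding the integral over $B(q_i, \sigma)$ by $\kappa_i$ forces $\sigma_i \to 0$, and by continuity $\widetilde{V}(q_i, \sigma_i) = \epsilon$ while $\widetilde{V}(q_i, \sigma) < \epsilon$ for $\sigma > \sigma_i$. Rescaling further by $\sigma_i^{-2}$ produces metrics $\widehat{\omega}_i = \sigma_i^{-2}\widetilde{\omega}_i$ on the expanding pointed ball $\widehat{B}_i(q_i, L_i)$ with $L_i = 1/\sigma_i \to \infty$, in which $\widehat{V}(q_i, 1) = \epsilon$ while $\widehat{V}(q_i, R) < \epsilon$ for all $R \in (1, L_i]$. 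The Ricci lower bound persists under the rescaling since the $\lambda\omega$-term rescales to zero while $(1-t)\alpha \geq 0$, and combined with non-collapsing and $1 - I(\widehat{B}_i) \to 0$, Colding's volume convergence identifies the pointed Gromov-Hausdorff limit with Euclidean $\mathbb{R}^{2n}$.

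The contradiction is then obtained in the limit by combining Euclidean monotonicity with the smooth structure of $\alpha$. The weak limit $\alpha_\infty$ is a closed positive $(1,1)$-current on $\mathbb{R}^{2n}$, whose Euclidean density $V_\infty(y, R) = R^{2-2n}\int_{B(y, R)}\alpha_\infty \wedge \omega_{Euc}^{n-1}$ is monotone non-decreasing in $R$. Euclidean monotonicity gives $V_\infty(q_\infty, R) \geq V_\infty(q_\infty, 1) = \epsilon$ for $R \geq 1$, while the upper bound $V_\infty(q_\infty, R) \leq \epsilon$ inherited from the rescaled sequence forces $V_\infty(q_\infty, R) \equiv \epsilon$ on $[1, \infty)$. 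On the other hand, since $\alpha$ is smooth on $M$, the blow-up at $q_i$ (in $\widehat{\omega}_i$-orthonormal coordinates, noting that $\widetilde{V}(q_i,\sigma_i) = \epsilon$ forces $\sigma_i^2|\alpha|_{\widetilde{\omega}_i}(q_i)$ to stay bounded away from $0$ and $\infty$) produces a constant Hermitian positive $(1,1)$-form $\alpha_\infty$ on $\mathbb{R}^{2n}$, whose density grows like $cR^2$ and so cannot be constant in $R$ unless it vanishes, contradicting $V_\infty(q_\infty, 1) = \epsilon > 0$.

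The main obstacle lies in the final step: to justify the weak convergence of $\alpha \wedge \widehat{\omega}_i^{n-1}$ and identify the Euclidean density $V_\infty$ with the limit of $\widehat{V}$, one needs a uniform $L^\infty$-bound on $|\alpha|_{\widehat{\omega}_i}$ on fixed-radius sub-balls of $\widehat{B}_i(q_i, L_i)$. In the $T = 1$ regime $|\alpha|_\omega$ is a priori unbounded, so this bound is non-trivial. I expect it to follow from a variant of the blow-up-at-maximum-of-$d_x^2|\alpha|$ argument in the proof of Proposition~\ref{prop:epsilonreg2}: the definition of $\sigma_i$ ensures that no sub-ball of radius strictly greater than $1$ in the rescaled picture has density $\geq \epsilon$, and together with $1 - I \to 0$ this should suffice to bound $|\alpha|_{\widehat{\omega}_i}$ uniformly on sub-balls of $\widehat{B}_i(q_i, L_i)$ away from $q_i$, giving the compactness needed to take the limit.
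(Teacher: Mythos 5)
Your proposal takes a genuinely different route from the paper, but it has several gaps that I do not see how to close within its framework.

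The paper's proof is not a blow-up-at-infinity argument; it is a constructive extremal argument that leans on Proposition~\ref{prop:CDS} (Chen--Donaldson--Sun's Proposition~1). One chooses a ball $B(q_0,R_0)$ on which $V(q_0,R_0)=\kappa$ is \emph{minimal} among configurations with $V(q_0,r_0)=\epsilon$ at a much smaller scale, rescales to unit size, and uses minimality to get the quasi-monotonicity statement \eqref{eq:monotone} \emph{at every point}, not just at $q_0$. This gives $\epsilon$-regularity (Proposition~\ref{prop:epsilonreg2}) and hence $(\cdot,\delta)$-charts at all points away from a set $Z_\eta$ with $m(\eta,Z_\eta)<M$. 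Feeding this into Proposition~\ref{prop:CDS} and then Proposition~\ref{prop:CDScor2} yields $\alpha<A\kappa\widetilde\omega$ on a ball of definite size, which contradicts $V(q_0,r_0)=\epsilon$ when $\kappa$ is small. Your argument has none of this covering structure and never invokes Proposition~\ref{prop:CDS}, which is the central tool.

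Concretely, the gaps in your approach are as follows. First, the critical scale $\sigma_i$ is defined via densities $\widetilde V(q_i,\cdot)$ centered at the single point $q_i$. This gives no control over $V(q,\rho)$ for other centers $q$, which is what Proposition~\ref{prop:epsilonreg2} requires (it demands $\sup_{B(q,r)\subset B}V(q,r)<\epsilon$). So the ``obstacle'' you point to at the end is not fixable from the definition of $\sigma_i$: you cannot conclude that no sub-ball of radius $>1$ centered \emph{anywhere} has large density, only those centered at $q_i$. Second, the parenthetical claim that $\widetilde V(q_i,\sigma_i)=\epsilon$ ``forces $\sigma_i^2|\alpha|_{\widetilde\omega_i}(q_i)$ to stay bounded away from $0$ and $\infty$'' is false: the density is an integral quantity and says nothing pointwise; $\alpha$ could be zero at $q_i$ with the mass of $\alpha\wedge\widetilde\omega_i^{n-1}$ concentrated near the boundary of $B(q_i,\sigma_i)$, or conversely very large at $q_i$. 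Third, identifying the weak limit of $\alpha_i\wedge\widehat\omega_i^{n-1}$ with $\alpha_\infty\wedge\omega_{Euc}^{n-1}$ for a \emph{constant} positive form $\alpha_\infty$ needs convergence of $\widehat\omega_i$ to $\omega_{Euc}$ in a sense much stronger than Gromov--Hausdorff; in the $T=1$ regime two-sided Ricci bounds (hence Anderson regularity) are exactly what is not yet available. Finally, even if $\alpha_\infty$ were a closed positive current on $\mathbf R^{2n}$ with constant density on $[1,\infty)$, the equality case in the Euclidean monotonicity formula only forces $\alpha_\infty$ to be conical, not identically zero, so the final ``$cR^2$'' contradiction does not go through without the constant-form claim. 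Taken together, the argument as written does not establish the proposition.
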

\begin{proof}
  Note first that $V(q,r)\to 0$ as $r\to 0$ for all $q\in B$. This
  means that if $V(q,r) > \epsilon$ and $q\in B(p,1/2)$, then $r$ is
  bounded away from zero. Let us also fix $\rho\in (0,1)$ which we will
  choose later, and we will initially restrict attention to pairs $r,R$ with
  $r < \frac{\rho}{6} R$. We can find a
  point $q_0$ and $r_0 \leqslant \frac{\rho}{6} R_0$ with $B(q_0,R_0) \subset
  \overline{B}(p,1/2)$
   such that
  \[ V(q_0, r_0) = \epsilon, \]
  and $V(q_0, R_0)$ is minimal in the sense that
  $V(q, R) \geqslant V(q_0, R_0)$ for all $q, R$ for which
 \begin{itemize}
\item 
    $B(q,R)\subset B(p,1/2)$ 
\item and $V(q,r)=\epsilon$ for
    some $r < \frac{\rho}{6} R$.
 \end{itemize}
 Let $\widetilde{B}$ denote the ball $B(q_0, R_0)$ scaled to unit
 size, with metric $\widetilde{\omega}$, and denote
 \[ \kappa = V(q_0, R_0), \]
so
\[\label{eq:intalphakappa}
 \int_{\widetilde{B}} \alpha\wedge \widetilde{\omega}^{n-1}
=\kappa, \]
and we are trying to prove a lower bound for $\kappa$. 
We have
\[ \label{eq:monotone}
V(q,r) < \epsilon\,\text{ for all }r < \frac{\rho}{6} R, \text{ if }V(q,R) <
\kappa\text{ and }B(q,R)\subset  B(p,1/2). \]
  For any $0 < \eta < 1/2$, let
 \[ Z_\eta = \{ x\in \widetilde{B}\,:\,
 r^{2-2n}\int_{\widetilde{B}(x, r)} \alpha\wedge
 \widetilde{\omega}^{n-1} \geqslant 2^{2-2n}\kappa, \text{for all }r\in (\eta,
 1/2)\}. \]
 This set has two properties:
\begin{itemize}
\item Suppose that $q\in \widetilde{B}\setminus Z_\eta$, and $x\in
  B(q, \eta/2)$. There is a $\tau\in (\eta, 1/2)$ such that
 \[ \tau^{2-2n}\int_{\widetilde{B}(q,\tau)}
 \alpha\wedge\widetilde{\omega}^{n-1} < 2^{2-2n}\kappa, \]
 and so using $\widetilde{B}(x,\tau/2)\subset \widetilde{B}(q, \tau)$
 we get 
 \[ \left(\frac{\tau}{2}\right)^{2-2n}\int_{\widetilde{B}(x,\tau/2)}
 \alpha \wedge\widetilde{\omega}^{n-1} < \kappa. \]
 Using \eqref{eq:monotone} we then have
\[ r^{2-2n}\int_{\widetilde{B}(x, r)} \alpha\wedge
\widetilde{\omega}^{n-1} < \epsilon, \]
for all $r < \frac{\rho\eta}{12}$. In particular we can apply
Proposition~\ref{prop:epsilonreg2} to the ball $B(q, \rho\eta/12)$. 
It follows that we have 
$\alpha \leqslant 4\cdot (\rho\eta/12)^{-2}\widetilde{\omega}$ on
$\widetilde{B}(q,\rho\eta/24)$. Using Anderson's result we get a $(\theta\rho\eta,
\delta')$-chart centered at $q$, for some $\theta,\delta' > 0$, where we can make $\delta'$
arbitrarily small by choosing $\delta$ small enough. 
\item Using \eqref{eq:intalphakappa} we get that for any $r\in [\eta,
  1)$, the set $Z_\eta$ can
be covered by $Mr^{2-2n}$ balls of radius $r$ for a universal
constant $M$.  
\end{itemize}
Using the number $M$ in Proposition~\ref{prop:CDS} we obtain a
$\rho(M)$, which we fix as our choice of $\rho$. Feeding back $M$ and
$c=\theta\rho$ into  Proposition~\ref{prop:CDS} we get
numbers $\eta(M,c),\delta(M,c)$. We can choose our
$\delta$ so that it, and $\delta'$ are less than $\delta(M,c)$. The
second point above implies that we can use the set $W=Z_{\eta(M,c)}$ in
Proposition~\ref{prop:CDS}, and so Proposition~\ref{prop:CDScor2}
applies to $\widetilde{B}$. In particular, if $\kappa$ is sufficiently
small, then $\alpha < A\kappa\widetilde{\omega}$ on
$\frac{1}{3}\rho\widetilde{B}$, 
and so we have 
\[ r^{2-2n}\int_{\widetilde{B}(q_0, r)} \alpha\wedge
\widetilde{\omega}^{n-1} < c_n A\kappa r^2 \]
for a dimensional constant $c_n$ and all $r < \rho/3$. 
Translating back to the unscaled ball this means that
\[ V(q_0, r) < c_nA\kappa \frac{R_0^2r^2}{4}, \]
for all $r < (R_0\rho)/6$. If $\kappa$ is too small, then this
contradicts $V(q_0,r_0)=\epsilon$. 

We have shown that we have a (universal) $\rho$ with the following
property. If  
$B(q,r) \subset B(p,1/2)$ and $V(q,r)
\geqslant \epsilon$, then $V(q,R) \geqslant \kappa$ whenever 
$ \frac{\rho}{6}R > r$ and $B(q,R)\subset B(p,1/2)$. Assume now that
$V(q,R) < \kappa$, and $r < R$. If $r < \frac{\rho}{6}R$ then we know
that $V(q,r) < \epsilon$, while if $r > \frac{\rho}{6}R$ then we have
\[ V(q,r) < \left(\frac{R}{r}\right)^{2n-2} V(q,R) <
\left(\frac{6}{\rho}\right)^{2n-2}\kappa. \]
Choosing $\kappa$ sufficiently small we can therefore get $V(q,r) <
\epsilon$ for all $r < R$. 
\end{proof}

The following is analogous to \cite[Proposition 6]{CDS13_3}. 
\begin{prop}\label{prop:CDSprop6}
  Let $K$ be the number from Proposition~\ref{prop:CDS}. Given $A,
  \theta > 0$ there are $\sigma(K), \gamma(A,\theta), \delta_*(\theta)
  > 0$ with the following effect. Suppose that $\int_B
  \alpha\wedge\omega^{n-1} < A$ and that $1-I(B) <
  \delta_*(\theta)$. Suppose that there is a holomorphic
  $F:B\to\mathbf{C}^n$ with $F(p)=0$, which is a homeomorphism onto a
  domain between $B^{2n}$ and $1.1B^{2n}$. In addition assume $|\nabla
  F| < K$ and that $\omega$ has a K\"ahler potential with $|\phi|<
  K$. If $t > 1-\gamma$, then there is a $(\sigma,\theta)$-chart
  centered at $p$. 
\end{prop}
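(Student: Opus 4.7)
The key idea is to exploit the small parameter $1-t$ by writing the equation \eqref{eq:cont2} as $\mathrm{Ric}(\omega) = \lambda\omega + \tilde\alpha$ with $\tilde\alpha := (1-t)\alpha$: as $t\to 1$, the ``effective source'' $\tilde\alpha$ has vanishing total mass $\int_B \tilde\alpha\wedge\omega^{n-1}<(1-t)A$, which will let us reduce to the two-sided Ricci bound regime on a smaller ball, where Proposition~\ref{prop:Anderson} applies. The candidate $(\sigma,\theta)$-chart will be (essentially) the restriction of the given map $F$ to a smaller ball, together with holomorphic coordinates produced by Proposition~\ref{prop:Anderson}.

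Stage 1 (two-sided Ricci bound): I would apply Proposition~\ref{prop:CDSprop5} with $\tilde\alpha$ in place of $\alpha$. If $\gamma$ is chosen so that $(1-t)A$ is small compared to the constant $\kappa$ of Proposition~\ref{prop:CDSprop5}, then the global bound on $\int_B\tilde\alpha\wedge\omega^{n-1}$ forces the $\tilde\alpha$-volume density $V(q,R)$ to lie below $\kappa$ for a choice of $R$ slightly less than $\tfrac12$ and every $q$ in a neighborhood of $p$. With $\delta_*$ matched to the $\delta$ hypothesis of Proposition~\ref{prop:CDSprop5}, that proposition then gives $V(q,r)<\epsilon$ at all smaller scales, uniformly, where $\epsilon$ is the constant from Proposition~\ref{prop:epsilonreg2}. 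The latter proposition then yields $\tilde\alpha\leqslant 4\omega$ on a ball of definite size, and hence $0\leqslant \mathrm{Ric}(\omega)\leqslant 5\omega$ there.

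Stage 2 (closeness to Euclidean): With a two-sided Ricci bound on the smaller ball, and $1-I$ still small there (after appropriate scaling), Proposition~\ref{prop:Anderson} produces harmonic coordinates on a ball of size $\sigma$ depending only on the universal Ricci bound (and hence, through hypothesis~\eqref{eq:alphabounds}, on $K$), in which $\omega$ has $W^{2,p}$-bounded components. The Newlander-Nirenberg / Hill-Taylor argument reviewed after Proposition~\ref{prop:Anderson} upgrades these to holomorphic coordinates in which $\omega$ is $C^{1,\alpha}$-close to the Euclidean metric. Making $\delta_*$ and $\gamma$ smaller if necessary, the $L^p$ deviation and the Gromov-Hausdorff distortion can each be made smaller than $\theta$, giving the required $(\sigma,\theta)$-chart.

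The main obstacle is threading the dependence of constants through the argument so that $\sigma$ depends only on $K$, $\gamma$ only on $A,\theta$, and $\delta_*$ only on $\theta$: Stage 1 forces the chain of conditions $(1-t)A\ll\kappa\Rightarrow V(q,R)<\kappa\Rightarrow V(q,r)<\epsilon\Rightarrow \tilde\alpha\leqslant 4\omega$, and the $\delta$ for Proposition~\ref{prop:CDSprop5} depends only on $K$ through $\epsilon$, while the $\gamma$ threshold depends on $A$ and (via $\theta$) on the final $L^p$ accuracy needed. A secondary issue is verifying that substituting $\tilde\alpha=(1-t)\alpha$ into Propositions~\ref{prop:CDSprop5} and~\ref{prop:epsilonreg2} preserves the coordinate hypotheses~\eqref{eq:alphabounds}: the same holomorphic coordinates for $\alpha$ serve for $\tilde\alpha$ with components merely rescaled by $1-t<1$, so the $C^2$ bound only improves and $K$ need not be enlarged.
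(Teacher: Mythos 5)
Your approach does not work, and the gap is precisely the one the paper calls out as the central difficulty of the $T=1$ case. You propose to apply Propositions~\ref{prop:CDSprop5} and~\ref{prop:epsilonreg2} to $\tilde\alpha = (1-t)\alpha$, using that $\int_B\tilde\alpha\wedge\omega^{n-1} < (1-t)A$ is small. But those propositions are not abstract statements about any K\"ahler form; their proofs (tracing back through Proposition~\ref{prop:Ricbound}) rely on the form being controlled in the normalized sense of~\eqref{eq:alphabounds}, i.e.\ $\tfrac12\delta_{j\bar k} < \alpha_{j\bar k} < 2\delta_{j\bar k}$ together with $\Vert\alpha_{j\bar k}\Vert_{C^2}<K$, on every $\alpha$-ball of radius $K^{-1}$. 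For $\tilde\alpha = (1-t)\alpha$ the $C^2$ bound improves, as you say, but the two-sided normalization fails: $\tilde\alpha_{j\bar k}\sim(1-t)\delta_{j\bar k}$, and if you rescale coordinates to restore it, the radius of control shrinks to $\sqrt{1-t}\,K^{-1}$, i.e.\ the effective $K$ blows up as $t\to 1$. Concretely, in the rescaled ball of Proposition~\ref{prop:epsilonreg2} one only has $\tilde\alpha\leqslant\widetilde\omega$, hence $\alpha\leqslant(1-t)^{-1}\widetilde\omega$, and the inclusion of a fixed-size $\widetilde\omega$-ball inside a $K^{-1}$-ball of $\alpha$ (which the proof of Proposition~\ref{prop:Ricbound} needs in order to invoke the good $\alpha$-coordinates) is lost entirely. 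This is exactly the sentence in Section~\ref{sec:Tis1}: ``the issue that arises when $T=1$ is that $(1-t)\alpha$ no longer satisfies such bounds as $t\to1$.''

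So the chain $(1-t)A\ll\kappa \Rightarrow V<\kappa \Rightarrow V<\epsilon \Rightarrow$ two-sided Ricci bound does not run: the intermediate $\epsilon$-regularity step cannot be applied to $\tilde\alpha$, and it cannot be applied to $\alpha$ either because $\int_B\alpha\wedge\omega^{n-1}$ is only bounded by $A$, which is not assumed small. The paper's actual proof handles small $A$ exactly as you suggest, but for large $A$ it argues by compactness: it passes to a weak limit current $\alpha_\infty$ of the $\alpha_i$ on $B^{2n}$ (this is where $\int_B\alpha\wedge\omega^{n-1}<A$ is used), invokes Siu's theorem to isolate the analytic set $V$ where the Lelong numbers are large, obtains $C^{1,\alpha}$ convergence of $\omega_i$ on compacta of $B^{2n}\setminus V$, writes $\alpha_i=\ddb f_i$ and uses the Monge--Amp\`ere equation~\eqref{eq:ieq} together with Demailly--Koll\'ar to get $L^p$ bounds on $\omega_i$ and pass to a smooth K\"ahler--Einstein limit $\omega_\infty$ (the singular term vanishes precisely because $1-t_i\to 0$), and finally establishes Gromov--Hausdorff convergence via Cheeger's segment inequality before invoking Anderson's result on $\omega_\infty$. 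None of that structure is present in your sketch, and no amount of rescaling of $\alpha$ substitutes for it.
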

\begin{proof}
  Using the $\epsilon$-regularity result
  Proposition~\ref{prop:epsilonreg}, the proof is essentially the same
  as that in \cite{CDS13_3}. First, if $A$ is sufficiently small, then
  the $\epsilon$-regularity, as in the proof of
  Proposition~\ref{prop:CDScor2}
  implies that $\alpha$ is actually bounded,
  and we are in the situation of bounded Ricci curvature, so
  Anderson's result applies.

  For large $A$ we argue by contradiction, so we have a sequence of
  balls $(B_i, \omega_i)$ with additional K\"ahler forms $\alpha_i$,
  satisfying \eqref{eq:cont2}, with $t_i\to 1$. By assumption we can
  think of the $\omega_i$ and $\alpha_i$ as metrics on $B^{2n}$, with
  $\omega_i$ having bounded K\"ahler potential, and $\omega_{Euc} <
  K\omega_i$. We have
  \[ \label{eq:intalphai}
  \int_{B^{2n}} \alpha_i\wedge\omega_{Euc}^{n-1} <
  K^{n-1}\int_{B^{2n}}\alpha_i\wedge\omega^{n-1} < CA. \]
  It follows that up to choosing a subsequence, the forms $\alpha_i$
  converge weakly to a limiting current $\alpha_\infty$. By Siu's
  theorem~\cite{Siu74}, the set $V\subset B^{2n}$
  where the Lelong numbers of $\alpha_\infty$ are at least
  $\epsilon_0/2$ (with $\epsilon_0$ from
  Proposition~\ref{prop:epsilonreg})  is an
  analytic subset.  For any $p\in B^{2n}\setminus V$, there is a
  radius $r_p$, such that
  \[ r_p^{2-2n}\int_{B^{Euc}(p, r_p)} \alpha\wedge\omega_{Euc}^{n-1} \leqslant
  \epsilon_0/2, \]
  and so for sufficiently large $i$, the same inequality holds for
  $\alpha_i$ with $\epsilon_0/2$ replaced by
  $\epsilon_0$. Proposition~\ref{prop:epsilonreg} then implies that
  for $i > N_p$ we have
  \[ \label{eq:supalphai} \sup_{B_{r_p/2}^{Euc}}
  \mathrm{tr}_{\omega_{Euc}}
  \alpha_i < Cr_p^{-2},\]
  so
  for any compact subset of $B^{2n}\setminus V$ we can find a
  uniform bound on the Ricci curvature of the $\omega_i$, and so by
  Anderson's result~\cite{An90} (and using that the Euclidean $r$-ball
  contains the ball of radius
  $rK^{-1/2}$ in the metric $\omega_i$) the metrics $\omega_i$ converge on
  $B^{2n}\setminus V$ locally in
  $C^{1,\alpha}$ to a limit $\omega_\infty$.

  We now want to write $\alpha_i = \ddb f_i$ for all $i$, 
  including $i=\infty$, so that $f_i \to f_\infty$
  locally in $L^1$. We can do this by obtaining $f_i$ for finite $i$,
  through the usual proof of the local
  $\partial\overline{\partial}$-lemma as in
  Griffiths-Harris~\cite{GH78} for instance. First we find 1-forms
  $\beta_i$ such that $\alpha_i=d\beta_i$ using the proof of the
  usual Poincar\'e lemma, using a base point $p\in B^{2n}\setminus V$ in the
  argument. Since we control the $\alpha_i$ uniformly in a
  neighborhood of $p$, the $\beta_i$ will also be controlled there. At
  the same time the integral bound \eqref{eq:intalphai} implies
  uniform $L^1$-bounds for the coefficients of $\beta_i$ on
  $B^{2n}$. Now we use the proof of the
  $\overline{\partial}$-Poincar\'e lemma \cite[p. 25]{GH78} to obtain $h_i$ with
  $\overline{\partial} h_i = \beta_i^{0,1}$. From the uniform control
  of $\beta_i$ near $p$ and the integral bound on $B^{2n}$ it follows
  that we have uniform bounds on the $h_i$ in a neighborhood of
  $p$. We can then take $f_i = 2\mathrm{Im}(h_i)$. The $f_i$ are
  plurisubharmonic functions on $B^{2n}$, with uniform bounds on a
  neighborhood of $p$, so after taking a subsequence we can assume
  that $f_i \to f_\infty$ in $L^1_{loc}$, and consequently we 
  have $\alpha_\infty = \ddb f_\infty$. 

  On any compact set in $B^{2n}\setminus V$ we have uniform bounds on
  $\Delta f_i$ from \eqref{eq:supalphai} so on such compact sets we obtain
  $C^{1,\alpha}$ bounds on the $f_i$ (independent of $i$). Using
  this, the rest of the proof in \cite{CDS13_3} can be followed
  closely.

  Let us write $\omega_i = \ddb\phi_i$. The equation
  \eqref{eq:cont2} can be written as
  \[ \label{eq:ieq}
 \det(\partial_j\overline{\partial}_k\phi_i) = e^{-\lambda_i\phi_i - (1-t_i)f_i}|U_i|^2, \]
  where $\lambda_i \leqslant 1$ and
  $U_i$ is a nowhere vanishing holomorphic function on
  $0.9B^{2n}$. As in \cite{CDS13_3} we can bound $|U_i|$ from above and
  below uniformly on a smaller ball $0.8B^{2n}$, and so on this ball we get
  \[ \label{eq:upbound} K^{-1}\omega_{Euc} \leqslant \omega_i \leqslant
  C'e^{-(1-t_i)f_i}\omega_{Euc},\]
  for some constant $C'$. Note that by Demailly-Koll\'ar~\cite[Theorem 0.2]{DK01}, we have a
  constant $\kappa > 0$, such that $e^{-\kappa f_i} \to e^{-\kappa
    f_\infty}$ in $L^1$, over $0.8B^{2n}$. In particular for any $q >
  0$, once $i$ is sufficiently large, we have $(1-t_i)q < \kappa$, and
  so from \eqref{eq:upbound} 
we have a uniform bound on the $L^q$-norm of $\omega_i$. The $C^{1,\alpha}$
  convergence of $\omega_i$ to $\omega_\infty$ away from $V$
 then implies that the $\omega_i$ converge to
  $\omega_\infty$ in $L^p$ for any $p$. It follows that up to choosing
  a subsequence, the potentials $\phi_i$ for $\omega_i$ converge in
  $L^{2,p}$ to a potential $\phi_\infty$ for $\omega_\infty$. Up to
  choosing a further subsequence we can
  take the limit in \eqref{eq:ieq} to see that $\phi_\infty$ is an
  $L^{2,p}$ solution of 
\[ \det(\partial_j\overline{\partial}_k\phi_\infty) =
e^{-\lambda\phi_\infty} |U_\infty|^2 \]
 on $0.7B^{2n}$ for some $\lambda \leqslant 1$ and nowhere-vanishing
 holomorphic function $U_\infty$. Using B\l{}ocki~\cite[Theorem
 2.5]{Bl99} this implies that $\phi_\infty$ is $C^{2,\alpha}$, and it
 follows that $\omega_\infty$ is a smooth K\"ahler-Einstein metric. In
 addition, passing to the limit in \eqref{eq:upbound} and using that $t_i\to
  1$, we get that the metric $\omega_\infty$ satisfies
  \[ \label{eq:upbound2} K^{-1} \omega_{Euc} \leqslant\omega_\infty \leqslant
  C'\omega_{Euc}. \]
  It remains to show that the $\omega_i$ converge to $\omega_\infty$ in
  the Gromov-Hausdorff sense on a smaller ball, 
  since using Anderson's result, we will
  then obtain a $(\sigma,\theta)$-chart centered at $p$ for
  sufficiently large $i$, contradicting our assumption that no such
  chart exists. 
 
For the Gromov-Hausdorff convergence, let us denote by $
d_i, d_\infty$ the distance functions induced by $
\omega_i, \omega_\infty$. Given $\epsilon > 0$ we will show that
\[\label{eq:dq1q2} d_i(q_1, q_2) \leqslant d_\infty(q_1,q_2) + \epsilon\] 
for any
$q_1,q_2\in \frac{1}{4}B^{2n}$ and
sufficiently large $i$. The converse inequality will be
analogous. 

For $\delta > 0$, let us denote by
$V_\delta$ the Euclidean $\delta$-neighborhood of $V$. Note that by
\eqref{eq:upbound} we have
\[ \mathrm{Vol}(V_\delta, \omega_i) = \int_{V_\delta} \omega_i^n
\leqslant (C')^n \int_{V_\delta} e^{-(1-t_i)nf_i}\,\omega_{Euc}^n, \]
and so for large enough $i$, by H\"older's inequality we obtain
\[ \mathrm{Vol}(V_\delta, \omega_i) \leqslant \Psi(\delta), \]
where $\Psi(\delta)$ denotes a function which converges to zero with
$\delta$, and which we might change below. 
By non-collapsing,
$\mathrm{Vol}(B_r(q_j, \omega_i)) > cr^{2n}$ for $j=1,2$, and so
there is a function
$\Psi(\delta)$, converging to zero with $\delta$, such that if $q_j\in
V_\delta$, then the ball
$B_{\Psi(\delta)}(q_j, \omega_i)$ intersects the boundary of $V_\delta$. In other
words, there are points $q_j' \not\in V_\delta$ such that
$d_i(q_j, q_j') < \Psi(\delta)$ for $j=1,2$ and sufficiently large
$i$. This means that we can replace $q_j$ by $q_j' \in B\setminus
V_\delta$ changing the distance $d_i(q_1, q_2)$ by only
$\Psi(\delta)$. That $d_\infty(q_1,q_2)$ also only changes by
$\Psi(\delta)$ follows from \eqref{eq:upbound} and
\eqref{eq:upbound2}. 

This means that we can assume that $q_1,q_2 \not\in V_\delta$. We will use
Cheeger-Colding's segment inequality, \cite[Theorem 2.15]{Ch01}, to
show that for sufficiently large $i$ 
we can find $q_j'$ for $j=1,2$ such that $d_{Euc}(q_j, q_j') <
\delta /2$, satisfying
\[ d_i(q_1', q_2') \leqslant d_\infty(q_1', q_2') +
\frac{\epsilon}{2}. \]
Note that the convergence of $\omega_i$ to $\omega_\infty$ is $C^{1,\alpha}$
outside $V_{\delta / 2}$ and $\omega_\infty$ is uniformly equivalent
to the Euclidean metric, so
\[ d_i(q_j, q_j'), \,\,d_\infty(q_j, q_j') \leqslant \Psi(\delta). \]
It then follows from this that
\[ d_i(q_1, q_2) \leqslant d_\infty(q_1, q_2) + \frac{\epsilon}{2} +
\Psi(\delta), \]
which implies the result we want once $\delta$ is sufficiently small. 

Let $g : 0.7B^{2n} \to \mathbf{R}$ be
the function 
\[ g(x) = \sup_{\substack{v\in T_xB \\ |v|_{\omega_\infty}=1}} \Big|
|v|_{\omega_i} - |v|_{\omega_\infty}\Big|. \]
The $L^p$-convergence implies that by choosing $i$ sufficiently large,
we can make $\int_B g\, \omega_\infty^n$ arbitrarily small. Let
$\gamma:[0,l] \to B^{2n}$ be a unit speed minimizing geodesic
from $q_1$ to $q_2$ with respect to $\omega_\infty$. We then have
\[ d_i(q_1, q_2) \leqslant d_\infty(q_1, q_2) + \int_0^l
g(\gamma(\tau))\,d\tau. \]

The segment inequality allows us to perturb $q_1,q_2$ slightly such
that  the corresponding integral above is very small. Indeed, in the notation
of \cite[Theorem 2.1]{Ch01} we let $A_i = B_{\delta / 2}(q_i)$ for
$i=1,2$ be Euclidean balls of radius $\delta/2$. Then
\[ \int_{A_1\times A_2} \mathcal{F}_g(x_1, x_2) \leqslant
C\Big(\mathrm{Vol}(A_1) + \mathrm{Vol}(A_2)\Big)\,\int_{0.7B} g, \]
where the integrals and volumes are with respect to $\omega_\infty$
(which is uniformly equivalent to the Euclidean metric) and
\[ \mathcal{F}_g(x_1, x_2) = \inf_\gamma \int_0^l g(\gamma(s))\,ds \]
is an infimum over all minimizing geodesics $\gamma$ from $x_1$ to
$x_2$ and $l$ is the length of $\gamma$. 

Given $\delta$, we control the volumes of $A_1$ and $A_2$ (here these
volumes only depend on $\delta$
since we are using $\omega_\infty$, but for the converse of
\eqref{eq:dq1q2} we need the volumes using the metric $\omega_i$,
which are nevertheless controlled by the $L^p$-convergence of
$\omega_i$ to $\omega_\infty$). We can then choose $i$ sufficiently
large, so that the average of $\mathcal{F}_g$ satisfies
\[ \frac{1}{\mathrm{Vol}(A_1\times A_2)} \int_{A_1\times A_2}
\mathcal{F}_g(x_1,x_2) \leqslant \frac{\epsilon}{2}, \]
which in turn means that we can find $q_i' \in B_{\delta / 2} (q_i)$
such that $\mathcal{F}_g(q_1', q_2') \leqslant \epsilon / 2$. In
particular 
\[ d_i(q_1', q_2') \leqslant d_\infty(q_1', q_2') +
\frac{\epsilon}{2}, \]
which is what we wanted to show.  
\end{proof}

Finally we need the analog of \cite[Proposition 8]{CDS13_3}. 
\begin{prop}\label{prop:CDSprop8}
 There is a $c > 0$ such that for any $\theta, A > 0$ we can find
 $\delta(\theta), \gamma(A,\theta)$ with the following property. If
\begin{itemize}
\item $1 - I(B) \leqslant \delta$,
\item $\displaystyle{ \int_B \alpha\wedge \omega^{n-1}} \leqslant A$,
\item $t > 1-\gamma$,
\end{itemize}
then there is a $(c,\theta)$-chart centered at $p$. 
\end{prop}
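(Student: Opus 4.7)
The plan is to combine Propositions \ref{prop:CDSprop5}, \ref{prop:CDS}, and \ref{prop:CDSprop6}: first use the integral bound on $\alpha\wedge\omega^{n-1}$ together with the weak monotonicity of $V(q,r)$ to see that $\alpha$ can only concentrate on a set $W\subset B(p,1/2)$ with a good Minkowski-type bound; then invoke Proposition \ref{prop:CDS} to promote this into a holomorphic map at a definite scale; and finally feed this into Proposition \ref{prop:CDSprop6} to obtain the desired $(c,\theta)$-chart.

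In detail, let $\epsilon,\kappa > 0$ be the constants from Propositions \ref{prop:epsilonreg2} and \ref{prop:CDSprop5}, and for $\eta > 0$ to be chosen, define
\begin{equation*}
W = \{q\in B(p,1/2)\,:\, V(q,r)\geqslant \epsilon \text{ for some } r\in [\eta, 1/2)\}.
\end{equation*}
Once $\delta$ is small enough to apply Proposition \ref{prop:CDSprop5}, every $q\in W$ satisfies $V(q,R)\geqslant \kappa$ for all $R$ with $B(q,R)\subset B(p,1/2)$. A Vitali covering argument using this lower bound together with $\int_B \alpha\wedge\omega^{n-1}\leqslant A$ then gives, for every $r\in[\eta, 1/2)$, a cover of $W$ by at most $Mr^{2-2n}$ balls of radius $r$, where $M=M(A)$ depends only on $A$; hence $m(\eta, W)\leqslant M$. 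For $q\in B(p,1/2)\setminus W$ we have $V(q,r) < \epsilon$ for all $r\in[\eta, 1/2)$, so Proposition \ref{prop:epsilonreg2} bounds $\alpha$ pointwise on a definite-size ball around $q$. This yields a two-sided Ricci bound there, and Anderson's result provides a $(c'\eta, \delta')$-chart centered at $q$, with $\delta'$ as small as desired provided $\delta$ is small enough.

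I would then invoke Proposition \ref{prop:CDS} with this $M=M(A)$ and a suitable $c'=c'(\theta)$ to obtain, at scale $\rho=\rho(M(A))$, a holomorphic homeomorphism $F:B(p,\rho)\to\mathbf{C}^n$ onto a domain between $0.9\rho B^{2n}$ and $1.1\rho B^{2n}$, together with a K\"ahler potential $\phi$ satisfying $|\phi|\rho^{-2} < K$. Rescaling $B(p,\rho)$ to unit size, the rescaled integral of $\alpha\wedge\omega^{n-1}$ remains bounded in terms of $A$ and $\rho(M(A))$, so choosing $\gamma(A,\theta)$ sufficiently small Proposition \ref{prop:CDSprop6} applies to yield a $(\sigma(K),\theta)$-chart centered at $p$ in the rescaled ball. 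Unrescaling produces a $(\sigma(K)\rho(M(A)),\theta)$-chart at $p$ in the original ball, which is the required chart.

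The main obstacle is parameter bookkeeping, specifically arranging that the final $\delta$ depends only on $\theta$ while the $A$-dependence is absorbed into $\gamma$. Both Propositions \ref{prop:CDSprop5} and \ref{prop:CDS} impose smallness conditions on $\delta$ that \emph{a priori} depend on $A$ through $M(A)$ and through the chart parameter $c'\eta$; the resolution is to choose $c'=c'(\theta)$ based only on $\theta$, and then fold all further $A$-dependence into $\gamma(A,\theta)$ rather than $\delta$. This parallels the reorganization of parameters in the proof of \cite[Proposition 8]{CDS13_3}, adjusted to accommodate our substitute $\epsilon$-regularity and weak monotonicity statements in Propositions \ref{prop:epsilonreg2} and \ref{prop:CDSprop5}.
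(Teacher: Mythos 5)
Your overall strategy---bounding the bad set via the integral hypothesis, feeding it into Proposition~\ref{prop:CDS}, and then applying Proposition~\ref{prop:CDSprop6}---matches the skeleton of the paper's proof, but there are two genuine gaps that the paper's argument is specifically designed to avoid, and both arise from your choice of threshold $\epsilon$ in the definition of $W$.

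First, with $W = \{q : V(q,r)\geqslant\epsilon \text{ for some } r\in[\eta,1/2)\}$ and total mass $\int_B\alpha\wedge\omega^{n-1}\leqslant A$, a Vitali argument gives a Minkowski bound of the form $m(\eta,W)\leqslant M$ with $M$ roughly $A/\epsilon$, so $M=M(A)$ grows with $A$. But then $\rho=\rho(M(A))$ in Proposition~\ref{prop:CDS} shrinks with $A$, and since your final chart scale is $c = \sigma(K)\,\rho(M(A))$, the constant $c$ ends up depending on $A$. The statement requires $c$ to be universal, chosen before $\theta$ and $A$; you cannot ``fold'' this into $\gamma$, because $\gamma$ controls $1-t$, not the chart scale. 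Second, for $q\notin W$ you only know $V(q,r)<\epsilon$ for $r\in[\eta,1/2)$, but the hypothesis \eqref{eq:ineqepsilon} of Proposition~\ref{prop:epsilonreg2} requires $\sup_r V(q,r)<\epsilon$ over \emph{all} scales down to zero (after rescaling the small ball to unit size). Proposition~\ref{prop:CDSprop5} would propagate smallness downward only if you had $V(q,\eta)<\kappa$, not merely $V(q,\eta)<\epsilon$; since typically $\kappa<\epsilon$, this step does not close.

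The paper sidesteps both problems with an induction on $A$: it defines $Z_r$ with threshold $A$ itself, so the Minkowski constant $M$ is universal and $\rho(M)$, $\sigma(K)$, $c=\rho\sigma$ are all fixed once and for all. The base case (small $A$) is handled directly with $\epsilon$-regularity. For the inductive step, assume $A$ is good with the fixed constants; if $V(p,1)\leqslant 2A$ and $q\notin W=\bigcap_r Z_r$, then $V(q,r)<A$ at some scale $r\in[\eta,1)$, and one applies the inductive hypothesis (Proposition~\ref{prop:CDSprop8} for $A$) to the scaled ball $B(q,r)$ to produce a $(c\eta,\delta(M,c))$-chart at $q$ --- this is where the charts outside $W$ come from, not from $\epsilon$-regularity. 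Feeding these into Propositions~\ref{prop:CDS} and~\ref{prop:CDSprop6} shows $2A$ is good with the same $c$. You should replace the $\epsilon$-threshold definition of $W$ and the direct use of Proposition~\ref{prop:epsilonreg2} on its complement with this doubling-in-$A$ induction.
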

\begin{proof}
  The proof from \cite{CDS13_3} can be used almost verbatim. If for
  some $A > 0$ we have $V(p,1) \leqslant 2A$, 
then denote
\[ \label{eq:Zrdef}
  Z_r = \{ q\in B\,:\, V(q,r) \geqslant A\}. \]
If $\{rB_1,\ldots, rB_k\}$ is a maximal collection of disjoint $r$-balls with centers
in $Z_r$, then $k\leqslant 2r^{2n-2}$. The balls $2rB_i$ cover $Z_r$,
while we can cover each $2rB_i$ with a fixed number of $r$-balls. It
follows that $Z_r$ is covered by at most $Mr^{2n-2}$ balls of
radius $r$, where $M$ is independent of $r, A$. Use this $M$ in
Proposition~\ref{prop:CDS} to obtain $\rho=\rho(M)$. From
Proposition~\ref{prop:CDSprop6} we have a number $\sigma=\sigma(K)$
(with $K$ from Proposition~\ref{prop:CDS}). Define $c=\rho\sigma$, and
use $M,c$ in Proposition~\ref{prop:CDS} to obtain $\eta(M,c)$ and
$\delta(M,c)$. 

If $A$ is sufficiently small, then for small enough $\delta(\theta)$ we can
combine Propositions~\ref{prop:epsilonreg2} and \ref{prop:CDSprop5}
with Anderson's result to get a $(c,\theta)$-chart at $p$. 
Setting $\delta(\theta)$ even smaller, we let $\delta(\theta) <
\delta(M,c)$ and $\delta(\theta) < \delta_*(\theta)$ (from
Proposition~\ref{prop:CDSprop6}).  Let
$\gamma(A,\theta)$ be the constant from
Proposition~\ref{prop:CDSprop6}. We say that $A$ is good, if with
these choices of constants the conclusion of
Proposition~\ref{prop:CDSprop8} holds. If $A$ is very small, then
we have seen that $A$ is good. We can also restrict ourselves to
$\theta < \delta(M,c)$. 

We suppose now that $A$ is good, and show that $2A$ is also good. 
Let $V(p,1) \leqslant 2A$, and 
$W=\bigcap\limits_{\eta \leqslant r < 1}Z_r$ with $Z_r$ as in
\eqref{eq:Zrdef},
so $m(\eta,W) < M$. If $x\in
B\setminus W$, then $V(x,r) < A$ for some $r\in [\eta, 1)$.
By assumption we can apply
Proposition~\ref{prop:CDSprop8} to $B(x, r)$ scaled to unit size
to obtain a $(c\eta, \delta(M,c))$-chart at $x$. This means that
Proposition~\ref{prop:CDS} applies, and its conclusion can be used in
Proposition~\ref{prop:CDSprop6}. This provides a $(\sigma,
\theta)$-chart at $p$, so $2A$ is also good. 
\end{proof}

Given this Proposition, the argument in \cite[Section 2.6]{CDS13_3}
can be used verbatim to show the following.
Let $\omega_t$ be metrics along the continuity method satisfying
\[ \mathrm{Ric}(\omega_t) = t\mathrm{Ric}(\omega_t) + (1-t)\alpha, \]
and let $Z$ be the Gromov-Hausdorff limit of $(M, \omega_{t_i})$ for
some $t_i\to 1$. Then the regular set in $Z$ is open, and the
convergence of the metrics is locally in $L^p$. The same applies to
any iterated tangent cone of $Z$. Indeed, suppose that $p$ is a
regular point in an iterated tangent cone of $Z$. Fix a small $\theta$
and let $\delta=\delta(\theta)$ from Proposition~\ref{prop:CDSprop8}.
A suitable ball $B$ centered at $p$, scaled to unit size then satisfies $1-I(B) <
\delta/3$. By definition this ball is the Gromov-Hausdorff limit of a
sequence of balls $B_i \subset Z$, scaled to unit size. In particular,
$1-I(B_i) < 2\delta/3$ for sufficiently large $i$. For a fixed such
$i$, the ball $B_i$ is the Gromov-Hausdorff limit of balls $B_{i,j}
\subset (M, \omega_{t_j})$, with some radius $r_i$ (the radius being
independent of $j$). For sufficiently large $j$ (depending on $i$), we
will have $1-I(B_{i,j}) < \delta$, and at the same time 
\[ r_i^{2-2n}\int_{B_{i,j}} \alpha\wedge \omega_{t_i}^{n-1} \leqslant
r_i^{2-2n} \int_M \alpha\wedge \omega_{t_i}^{n-1} < C'r_i^{2-2n}, \]
for some $C'$. For fixed $i$ we can therefore apply
Proposition~\ref{prop:CDSprop8} to the balls $B_{i,j}$ scaled to
unit size, for sufficiently large $j$ to obtain a $(c,\theta)$-chart
at their centers. Since the original ball $B$ in the iterated tangent
cone is a limit of a sequence $B_{i, j(i)}$ where $j(i)$ can be taken
arbitrarily large for any $i$, we obtain the required convergence on
the ball $cB$. 

\begin{prop}\label{prop:iterated}
  In the limit space $Z$ above, no iterated tangent cone can be of the form
  $\mathbf{C}_\gamma\times \mathbf{C}^{n-1}$. 
\end{prop}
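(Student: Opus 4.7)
The plan is to argue by contradiction. Suppose an iterated tangent cone $C$ of $Z$ has the form $\mathbf{C}_\gamma\times\mathbf{C}^{n-1}$ with $\gamma<1$. Realize $C$ as the Gromov--Hausdorff limit of rescaled balls $(B_i,\omega_i)$ extracted from $(M,\omega_{t_i})$ with $t_i\to 1$, centers converging to the cone vertex. Because we are in an iterated tangent cone, the rescaling forces the effective coefficient $\lambda_i$ in $\mathrm{Ric}(\omega_i)=\lambda_i\omega_i+(1-t_i)\alpha_i$ to tend to zero, and for the limit cone to carry a codimension-two conical singularity the term $(1-t_i)\alpha_i$ must concentrate along a codimension-two stratum in the limit.

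From the discussion preceding the proposition, the regular part $(\mathbf{C}_\gamma\setminus\{0\})\times\mathbf{C}^{n-1}$ of $C$ is an $L^p$-limit of the corresponding regular parts of the $B_i$, with limit equal to the flat Ricci-flat product cone metric; moreover $(c,\theta)$-charts are available at every regular point of $C$ via Proposition~\ref{prop:CDSprop8}, for arbitrarily small $\theta>0$ once $i$ is sufficiently large. The contradiction is to be extracted from this picture, following~\cite[Section 2.6]{CDS13_3}. The most natural route is to exploit the monotonicity of the volume density $V(q,r)=r^{2-2n}\int_{B(q,r)}\alpha\wedge\omega^{n-1}$ established in Proposition~\ref{prop:CDSprop5}: at approximating points $q_i$ on (or near) the conical stratum, the codim-two concentration forced by the assumed cone structure would yield a uniform lower bound on $V(q_i,r)$ at some fixed scale, while the $L^p$-convergence to the flat product metric drives $V$ to zero at smaller scales on the regular side. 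Feeding this into Proposition~\ref{prop:CDSprop5}, together with the integral upper bound on $\int_{B_i}\alpha_i\wedge\omega_i^{n-1}$ that propagates from the $T<1$ arguments (namely, the analog of Proposition~\ref{prop:densityupper} via Proposition~\ref{prop:CDSprop6}), yields a contradiction.

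The main obstacle is executing this at the correct scale. The iterated tangent cone construction involves rescaling factors tending to infinity, and one must verify that the hypotheses of Propositions~\ref{prop:CDSprop5} and~\ref{prop:CDSprop8}, in particular the smallness of $1-I$ on the relevant balls, remain valid at these scales. Additionally, the $(c,\theta)$-charts provided by Proposition~\ref{prop:CDSprop8} only yield $L^p$-control of $\omega_i$, rather than pointwise control of $\alpha_i$, so the quantitative link between the regularity of the cone on its smooth part and the forced concentration of $\alpha_i$ near the singular stratum requires the combined use of Proposition~\ref{prop:CDScor2} (to bound $\alpha$ by $\omega$ once the density is small) together with the $\epsilon$-regularity of Proposition~\ref{prop:epsilonreg2}. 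Once these estimates are matched, the regular-part flatness and the codim-two concentration become mutually inconsistent, which is the desired contradiction.
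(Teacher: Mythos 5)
Your proposal has the right general framing (argue by contradiction; place the rescaled balls onto the Euclidean ball via the charts; observe that a cone angle $\gamma<1$ should force some concentration) and you correctly notice both that the effective coefficient of $\omega_i$ in the Ricci equation becomes negligible after rescaling and that the singular term is $(1-t_i)\alpha_i$ rather than $\alpha_i$ alone. But the core of the argument is missing, and the route you sketch via Propositions~\ref{prop:CDSprop5}, \ref{prop:CDScor2} and \ref{prop:epsilonreg2} does not lead to a contradiction.

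The gap is in the claim that the conical structure ``would yield a uniform lower bound on $V(q_i,r)$ at some fixed scale.'' What the cone geometry actually controls is the integral of $\mathrm{Ric}(\widetilde{\omega}_i)$, not the integral of $\alpha_i$. Concretely, the paper fixes the slab $V=\{|u|\leqslant 1/4,\,|v|^2\leqslant 1/4\}$, writes $\mathrm{Ric}(\widetilde{\omega}_i)=-\ddb\log(\widetilde{\omega}_i^n/\omega_{Euc}^n)$, and computes $\int_{V_{\mathbf a}}\mathrm{Ric}(\widetilde{\omega}_i)$ by Stokes as a boundary integral over $\partial V_{\mathbf a}$. Because $\partial V_{\mathbf a}$ is a compact set disjoint from $\{u=0\}$, the $C^{1,\alpha}$-closeness of $\widetilde{\omega}_i$ to the cone metric $\eta_\gamma$ makes this boundary integral close to the one for $\eta_\gamma$, which is nonzero precisely when $\gamma\neq 1$. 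This gives a definite lower bound $\int_V\mathrm{Ric}(\widetilde{\omega}_i)\wedge\omega_{Euc}^{n-1}>c_0$, hence $\int_V\mathrm{Ric}(\widetilde{\omega}_i)\wedge\widetilde{\omega}_i^{n-1}>c_1$. Feeding in $\mathrm{Ric}(\widetilde{\omega}_i)=k^{-1}t_i\widetilde{\omega}_i+(1-t_i)\alpha_i$ and using volume convergence to absorb the $k^{-1}t_i\widetilde{\omega}_i$ term, one obtains $\int_V(1-t_i)\alpha_i\wedge\widetilde{\omega}_i^{n-1}>c_1/2$. The contradiction then comes from the global cohomological bound $\int_V\alpha_i\wedge\widetilde{\omega}_i^{n-1}\leqslant k^{n-1}\int_M\alpha\wedge\omega_{t_i}^{n-1}$, which is a fixed constant, multiplied by $(1-t_i)\to 0$. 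So the lower bound is on $(1-t_i)V$, not on $V$, and it is the vanishing of the $(1-t_i)$ prefactor against the fixed cohomological upper bound that produces the contradiction. Your version, which tries to lower-bound $V(q_i,r)$ itself and compare it with density-upper-bound estimates, is trying to prove something that is not true in this regime (there is no reason $V(q_i,r)$ is bounded below once $t_i\to 1$), and it omits the Stokes computation that actually converts the cone-angle hypothesis into an integral Ricci lower bound.
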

\begin{proof}
  First, the discussion above means that
  one can use the arguments of \cite[Section
2.5]{CDS13_2} to ensure that we are in the setting discussed before
Proposition~\ref{prop:conebound}. In other words, if an iterated
tangent cone of 
$Z$ is $\mathbf{C}_\gamma\times \mathbf{C}^{n-1}$, then we can first
find a ball in $Z$ which scaled to unit size is very close to the unit
ball in $\mathbf{C}_\gamma\times \mathbf{C}^{n-1}$, and this ball is a
Gromov-Hausdorff limit of balls of some fixed radius $r$ in $(M,
\omega_{t_i})$.  After scaling up by a fixed
factor the metrics $\widetilde{\omega}_i = k\omega_i$ on these small
balls  can then be thought
of as metrics on the Euclidean ball $B^{2n}$, and they satisfy the
properties (1),(2),(3) before
Proposition~\ref{prop:conebound}. Consider the set
\[ V = \{(u,v_1,\ldots, v_{n-1})\,:\, |u|\leqslant 1/4, |v_1|^2 +
\ldots + |v_{n-1}|^2 \leqslant 1/4\}. \]
We want to bound
\[ \int_V \mathrm{Ric}(\widetilde{\omega_i})\wedge
\omega_{Euc}^{n-1} \]
from below, and for this it is enough, for each $|\mathbf{a}|^2 \leqslant 1/4$,
to bound
\[ \label{eq:intVa} \int_{V_\mathbf{a}}\mathrm{Ric}(\widetilde{\omega_i}) \]
from below where $V_{\mathbf{a}}$
is the disk $V\cap \{(v_1,\ldots,v_{n-1})=\mathbf{a}\}$. Using that
\[ \mathrm{Ric}(\widetilde{\omega_i}) =
-\ddb\log\frac{\widetilde{\omega}_i^n}{ \omega_{Euc}^n}, \]
the integral \eqref{eq:intVa} can be computed by an integral over
$\partial V_{\mathbf{a}}$ of a term involving one derivative of
$\widetilde{\omega}_i$. Since $\partial V_{\mathbf{a}}$ is a compact set disjoint
from $\{u=0\}$, we can assume that $\widetilde{\omega}_i$ is very
close to the cone metric $\eta_\gamma$ in $C^{1,\alpha}$, so the
integral \eqref{eq:intVa} can be assumed to be very close to the
corresponding integral for $\eta_\gamma$. Unless $\gamma=1$, this
latter integral is non-zero. So for a sufficiently large scaling
factor $k$, and large enough $i$, we have a
lower bound
\[ \int_V \mathrm{Ric}(\widetilde{\omega_i})\wedge
\omega_{Euc}^{n-1} > c_0, \]
which because of $\omega_{Euc} < C\widetilde{\omega}_i$ implies
\[ \label{eq:c1}
\int_V \mathrm{Ric}(\widetilde{\omega_i})\wedge
\widetilde{\omega}_i^{n-1} > c_1, \]
for some other constant $c_1$ (depending on $\gamma$). We have 
\[\mathrm{Ric}(\widetilde{\omega}_i) = k^{-1}t_i\widetilde{\omega}_i +
(1-t_i)\alpha_i \leqslant
k^{-1}\widetilde{\omega}_i + (1-t_i)\alpha_i, \]
where we write $\alpha_i$ to emphasize that the (fixed) K\"ahler
metric $\alpha$ will depend on $i$ when we are thinking of the
$\widetilde{\omega}_i$ as metrics on the Euclidean ball $B^{2n}$. By
volume convergence, we control the $\widetilde{\omega}_i$-volume of
the set $V$, so from \eqref{eq:c1} we get
\[\label{eq:c1/2} \int_V (1-t_i)\alpha_i \wedge \widetilde{\omega}_i^{n-1} >
\frac{c_1}{2} \]
once the scaling factor $k$ is large enough (and also $i$ is
sufficiently large). But
\[ \int_V \alpha_i\wedge \widetilde{\omega}_i^{n-1} \leqslant
k^{n-1}\int_M \alpha\wedge\omega_{t_i}^{n-1}, \]
which contradicts \eqref{eq:c1/2} as $t_i\to 1$. 
\end{proof}

From this result it is now clear that every tangent cone of $Z$ is
good, since the singular set in each tangent cone must have Hausdorff
codimension at least 4 and so the argument of \cite[Proposition
3.5]{DS12} applies.  This completes the proof of
Theorem~\ref{thm:goodcones}. 

\section{Proof of Corollary~\ref{cor:Paul}}\label{sec:corproof}
We now give the proof of Corollary~\ref{cor:Paul}, relating Paul's
version of stability with the existence of K\"ahler-Einstein metrics. 
\begin{proof}
Paul~\cite{Paul13} shows that under the assumption of his version of
stability, for each $l > 0$, the
Mabuchi energy is proper when restricted to the space of Bergman
metrics, i.e. metrics obtained as pullbacks of the Fubini-Study metric
under an embedding using $K^{-l}_M$. As explained in \cite{Tian12_1},
the partial $C^0$-estimate allows us to compare the Mabuchi energy
of the metrics $\omega_t$ with the Bergman metrics obtained using
embeddings with $L^2$-orthonormal sections. The following argument is
similar to that in 
Tian-Zhang~\cite{TZ13}. Recall that if $\eta' = \eta + \ddb\phi$ are
two metrics in $c_1(M)$,
then the Mabuchi energy is defined by 
\[ \mathcal{M}(\eta, \eta') = \int_0^1 \int_{M} \phi (n -
S(\eta_t))\,\eta_t^n\,dt, \]
where $\eta_t = \eta + t\ddb\phi$. 
Let us write $t_i \to T$ for a sequence converging to $T$, and let
$\omega_i = \omega_{t_i}$. For the $k_0$ in Theorem~\ref{thm:main}
write
\[ \eta_i = \omega_i + \frac{1}{k_0}\ddb \rho_{\omega_i, k_0} \]
for the corresponding Bergman metric, i.e. $\eta_i = \frac{1}{k_0}
\Phi_i^*\omega_{FS}$, where $\Phi_i : M \to\mathbf{CP}^N$ are
embeddings given by an orthonormal basis of $H^0(K_M^{-k_0})$ with
respect to the metric $\omega_i$. We can replace our sequence $t_i$ by
a subsequence to ensure that the varieties $\Phi_i(M)$ converge to a
limit $Z$ in projective space. Then Paul's result cited above
implies that if $M$ is stable, and $Z$ is not in the
$GL(N+1,\mathbf{C})$-orbit of the $\Phi_i(M)$, then
$\mathcal{M}(\eta_1, \eta_i)\to\infty$. 

The cocycle property of the Mabuchi energy implies that
\[ \mathcal{M}(\eta_1, \eta_i) = \mathcal{M}(\eta_1, \omega_i) +
\mathcal{M}(\omega_i, \eta_i), \]
and along the continuity method the Mabuchi energy is decreasing, so
if $\mathcal{M}(\eta_1, \eta_i) \to \infty$, then necessarily
$\mathcal{M}(\omega_i, \eta_i)\to \infty$. We will thus obtain a
contradiction if we show that $\mathcal{M}(\omega_i, \eta_i)$ is
bounded. To see this, as in \cite{TZ13} we can use the explicit
formula for the Mabuchi energy from Tian~\cite{Tian00} to obtain
\[ \mathcal{M}(\omega_i, \eta_i) \leqslant \int_M \log
\frac{\eta_i^n}{\omega_i^n} \eta_i^n + \int_M u_i (\eta_i^n -
\omega_i^n), \]
where $u_i$ is the Ricci potential of $\omega_i$ defined by
\[ \ddb u_i = \omega_i - \mathrm{Ric}(\omega_i). \]
We can normalize $u_i$ so that $\inf_M u_i = 0$.  From the defining
equation we have $\Delta u_i = n - S(\omega_i) \leqslant n$, which
implies, through control of the Green's function,  that
\[ \int_M u_i \omega_i^n \leqslant C. \]
In addition, the partial $C^0$-estimate combined with a gradient
estimate for holomorphic sections implies that $\eta_i <
C\omega_i$ (see Donaldson-Sun~\cite[Lemma 4.2]{DS12}). From this we
easily obtain an upper bound on $\mathcal{M}(\omega_i, \eta_i)$. 
It follows therefore that the $\Phi_i(M)$ converge to a limit in the
$GL(N+1,\mathbf{C})$-orbit of $\Phi_1(M)$, which in turn can be used
to control the K\"ahler potentials of the $\eta_i$. The partial
$C^0$-estimate then in turn controls the K\"ahler potentials of the
$\omega_i$. It follows that the continuity method can be continued
past $T$ (or we obtain a K\"ahler-Einstein metric when $T=1$) 
using also the estimates of
Yau~\cite{Yau78} and Aubin~\cite{Aub78}. 

Conversely an important result due to Tian~\cite{Tian97} is that if $M$ admits a
K\"ahler-Einstein metric and has no holomorphic vector fields, then
the Mabuchi energy is proper on the space of all K\"ahler metrics in
$c_1(M)$, and this (see also Phong-Song-Sturm-Weinkove~\cite{PSSW} for an
improvement), implies the stability of $M$ in the sense of
Paul. See also the discussion in Section 6 of Tian~\cite{Tian13} and
\cite{Tian13_1}. 
\end{proof}

\providecommand{\bysame}{\leavevmode\hbox to3em{\hrulefill}\thinspace}
\providecommand{\MR}{\relax\ifhmode\unskip\space\fi MR }
\providecommand{\MRhref}[2]{%
  \href{http://www.ams.org/mathscinet-getitem?mr=#1}{#2}
}
\providecommand{\href}[2]{#2}


\begin{thebibliography}{10}

\bibitem{An90}
M.~Anderson, \emph{Convergence and rigidity of manifolds under {R}icci
  curvature bounds}, Invent. Math. \textbf{97} (1990), 429--445.

\bibitem{Aub78}
T.~Aubin, \emph{{\'E}quations du type {M}onge-{A}mp\`ere sur les variet\'es
  k\"ahl\'eriennes compactes}, Bull. Sci. Math. (2) \textbf{102} (1978), no.~1,
  63--95.

\bibitem{Aub84}
\bysame, \emph{R\'eduction de cas positif de l'\'equation de {M}onge-{A}mp\`ere
  sur les vari\'et\'es k\"ahl\'eriennes compactes \`a la d\'emonstration d'une
  in\'egalit\'e}, J. Funct. Anal. \textbf{57} (1984), no.~2, 143--153.

\bibitem{Bl99}
Z.~B\l{}ocki, \emph{On the regularity of the complex {M}onge-{A}mp\`ere
  operator}, Complex geometric analysis in {P}ohang (1997) (Providence, RI),
  Contemp. Math., vol. 222, Amer. Math. Soc., 1999, pp.~181--189.

\bibitem{Bou}
S.~Boucksom, \emph{Finite generation for Gromov-Hausdorff limits},
unpublished note, {\tt http://sebastien.boucksom.perso.math.cnrs.fr/notes/Donaldsun.pdf}

\bibitem{Car10}
G.~Carron, \emph{Some old and new results about rigidity of critical metric},
  Ann. Sc. Norm. Super. Pisa Cl. Sci. (5) \textbf{13} (2014), no. 4, 1091--1113.  

\bibitem{Ch01}
J.~Cheeger, \emph{Degeneration of {R}iemannian metrics under {R}icci curvature
  bounds}, Lezioni Fermiane, Scuola Normale Superiore, Pisa, 2001.

\bibitem{CC97}
J.~Cheeger and T.~Colding, \emph{On the structure of spaces with {R}icci
  curvature bounded below. {I}.}, J. Differential Geom. \textbf{46} (1997),
  no.~3, 406--480.

\bibitem{CC00}
\bysame, \emph{On the structure of spaces with {R}icci curvature bounded below.
  {II}.}, J. Differential Geom. \textbf{54} (2000), no.~1, 13--35.

\bibitem{CC00_1}
\bysame, \emph{On the structure of spaces with {R}icci curvature bounded below.
  {III}.}, J. Differential Geom. \textbf{54} (2000), no.~1, 37--74.

\bibitem{CCT02}
J.~Cheeger, T.~Colding, and G.~Tian, \emph{On the singularities of spaces with
  bounded ricci curvature}, Geom. Funct. Anal. \textbf{12} (2002), no.~5,
  873--914.

\bibitem{CDS12}
Xiuxiong Chen, Simon Donaldson, and Song Sun, \emph{K{\"a}hler-{E}instein
  metrics and stability}, Int. Math. Res. Not. IMRN (2014), no.~8, 2119--2125.

\bibitem{CDS13_2}
\bysame, \emph{K{\"a}hler-{E}instein metrics on {F}ano manifolds. {II}:
  {L}imits with cone angle less than {$2\pi$}}, J. Amer. Math. Soc. \textbf{28}
  (2015), no.~1, 199--234.

\bibitem{CDS13_3}
\bysame, \emph{K{\"a}hler-{E}instein metrics on {F}ano manifolds. {III}:
  {L}imits as cone angle approaches {$2\pi$} and completion of the main proof},
  J. Amer. Math. Soc. \textbf{28} (2015), no.~1, 235--278.

\bibitem{Col97}
T.~H. Colding, \emph{Ricci curvature and volume convergence}, Ann. of Math. (2)
  \textbf{145} (1997), no.~3, 477--501.

\bibitem{DK01}
J.-P. Demailly and J.~Koll\'ar, \emph{Semi-continuity of complex singularity
  exponents and {K}\"ahler-{E}instein metrics on {F}ano orbifolds}, Ann. Sci.
  \'Ec. Norm. Sup\'er. (4) \textbf{34} (2001), 525--556.

\bibitem{Don02}
S.~K. Donaldson, \emph{Scalar curvature and stability of toric varieties}, J.
  Differential Geom. \textbf{62} (2002), 289--349.

\bibitem{Don09}
\bysame, \emph{Discussion of the {K}\"ahler-{E}instein problem}, {\tt
  http://www2.imperial.ac.uk/{\textasciitilde}skdona/KENOTES.PDF} (2009).

\bibitem{DS12}
Simon Donaldson and Song Sun, \emph{Gromov-{H}ausdorff limits of {K}{\"a}hler
  manifolds and algebraic geometry}, Acta Math. \textbf{213} (2014), no.~1,
  63--106.

\bibitem{EG92}
L.~Evans and R.~Gariepy, \emph{Measure theory and fine properties of
  functions}, Studies in Advanced Mathematics, CRC Press, Boca Raton,
FL, 1992.

\bibitem{GH78}
P.~Griffiths and J.~Harris, \emph{Principles of algebraic geometry}, Pure and
  Applied Mathematics, Wiley-Interscience, New York, 1978.

\bibitem{Gro07}
M.~Gromov, \emph{Metric structures for {R}iemannian and non-{R}iemannian
  spaces}, Birkh\"auser Boston Inc., 2007.

\bibitem{HT07}
C.~D. Hill and M.~Taylor, \emph{The complex {F}robenius theorem for rough
  involutive structures}, Trans. Amer. Math. Soc. \textbf{359} (2007), no.~1,
  293--322.

\bibitem{Lu98}
Z.~Lu, \emph{On the lower order terms of the asymptotic expansion of
  {T}ian-{Y}au-{Z}elditch}, Amer. J. Math. \textbf{122} (1998), no.~2,
  235--273.

\bibitem{NW63}
A.~Nijenhuis and W.~B. Woolf, \emph{Some integration problems in almost-complex
  and complex manifolds}, Ann. of Math. (2) \textbf{77} (1963), 424--489.

\bibitem{Paul13}
S.~T. Paul, \emph{Stable pairs and coercive estimates for the {M}abuchi
  functional}, arXiv:1308.4377.

\bibitem{Paul12}
\bysame, \emph{Hyperdiscriminant polytopes, {C}how polytopes, and {M}abuchi
  energy asymptotics}, Ann. of Math. (2) \textbf{175} (2012), no.~1, 255--296.

\bibitem{PT06}
S.~T. Paul and G.~Tian, \emph{{CM} stability and the generalised {F}utaki
  invariant {II}}, Ast\'erisque \textbf{328} (2009), 339--354.

\bibitem{PRS06}
D.~H. Phong, J.~Ross, and J.~Sturm, \emph{Deligne pairings and the
  {K}nudsen-{M}umford expansion}, J. Differential Geom. \textbf{78} (2008),
  no.~3, 475--496.

\bibitem{PSS12}
D.~H. Phong, J.~Song, and J.~Sturm, \emph{Degenerations of {K}\"ahler-{R}icci
  solitons on {F}ano manifolds}, Univ. Iagel. Acta Math. \textbf{52}
(2015), 29--43.

\bibitem{PSSW}
D.~H. Phong, J.~Song, J.~Sturm, and B.~Weinkove, \emph{The {M}oser-{T}rudinger
  inequality on {K}\"ahler-{E}instein manifolds}, Amer. J. Math. \textbf{130}
  (2008), no.~4, 1067--1085.

\bibitem{Ru98}
W.-D. Ruan, \emph{Canonical coordinates and {B}ergman metrics}, Comm. Anal.
  Geom. \textbf{6} (1998), no.~3, 589--631.

\bibitem{Ru99}
\bysame, \emph{On the convergence and collapsing of {K}\"ahler metrics}, J.
  Differential Geom. \textbf{52} (1999), no.~1, 1--40.

\bibitem{SU82}
R.~Schoen and K.~Uhlenbeck, \emph{A regularity theory for harmonic maps}, J.
  Differential Geom. \textbf{17} (1982), no.~2, 307--335.

\bibitem{Siu74}
Y.~T. Siu, \emph{Analyticity of sets associated to {L}elong numbers and the
  extension of closed positive currents}, Invent. Math. \textbf{27} (1974),
  53--156.

\bibitem{SY82}
Y.~T. Siu and S.-T. Yau, \emph{Compactification of negatively curved complete
  {K}\"ahler manifolds of finite volume}, Seminar on Differential Geometry,
  Ann. of Math. Stud., vol. 102, Princeton University Press, Princeton, N.J.,
  1982, pp.~363--380.

\bibitem{Tian13}
G.~Tian, \emph{K-stability and {K}\"ahler-{E}instein metrics},
Comm. Pure Appl. Math. \textbf{68} (2015), no. 7, 1085--1156.

\bibitem{Tian90_2}
\bysame, \emph{{K}\"ahler-{E}instein metrics on algebraic manifolds}, Proc. of
  Int. Congress of Math. Kyoto, 1990.

\bibitem{Tian13_1}
\bysame, \emph{Stability of pairs}, arXiv:1310.5544.

\bibitem{Tian90_1}
\bysame, \emph{On a set of polarized {K}\"ahler metrics on algebraic
  manifolds}, J. Differential Geom. \textbf{32} (1990), no.~1, 99--130.

\bibitem{Tian90}
\bysame, \emph{On {C}alabi's conjecture for complex surfaces with positive
  first {C}hern class}, Invent. Math. \textbf{101} (1990), no.~1, 101--172.

\bibitem{Tian93}
\bysame, \emph{Compactness theorems for {K}\"ahler-{E}instein manifolds of
  dimension 3 and up}, J. Differential Geom. \textbf{35} (1992), no.~3,
  535--558.

\bibitem{Tian97}
\bysame, \emph{K\"ahler-{E}instein metrics with positive scalar curvature},
  Invent. Math. \textbf{137} (1997), 1--37.

\bibitem{Tian00}
\bysame, \emph{Canonical metrics in {K}\"ahler geometry}, Lectures in
  {M}athematics {ETH} {Z}\"urich, Birkh\"auser Verlag, Basel, 2000.

\bibitem{Tian02}
\bysame, \emph{Extremal metrics and geometric stability. special issue for s.
  s. chern.}, Houston J. Math. \textbf{28} (2002), no.~2, 411--432.

\bibitem{Tian12_1}
\bysame, \emph{Existence of {E}instein metrics on {F}ano manifolds}, Metric and
  {D}ifferential {G}eometry, {The Jeff Cheeger Anniversary Volume} (X.~Dai and
  X.~Rong, eds.), Progress in {M}athematics, vol. 297, Springer Basel, 2012,
  pp.~119--159.

\bibitem{TZ13}
G.~Tian and Z.~Zhang, \emph{Regularity of {K}\"ahler-{R}icci flows on {F}ano
  manifolds}, Acta Math. \textbf{216} (2016), no. 1, 127--176.

\bibitem{Yau78}
S.-T. Yau, \emph{On the {R}icci curvature of a compact {K}\"ahler manifold and
  the complex {M}onge-{A}mp\`ere equation {I}.}, Comm. Pure Appl. Math.
  \textbf{31} (1978), 339--411.

\bibitem{Yau93}
\bysame, \emph{Open problems in geometry}, Proc. Symposia Pure Math.
  \textbf{54} (1993), 1--28.

\bibitem{Zel98}
S.~Zelditch, \emph{Szeg{\H o} kernel and a theorem of {T}ian}, Int. Math. Res.
  Notices \textbf{6} (1998), 317--331.

\end{thebibliography}
\end{document}